\documentclass[9pt,reqno]{amsart}  
\usepackage[english]{babel}
\usepackage[p,osf]{cochineal}
\usepackage[scale=.95,type1]{cabin}
\usepackage[zerostyle=c,scaled=.94]{newtxtt}
\usepackage[T1]{fontenc} 
\usepackage[utf8]{inputenc} 
\usepackage{amsthm}
\usepackage{amsmath}
\usepackage{amssymb}
\usepackage{amsfonts}
\usepackage{amsaddr}
\usepackage{xspace}
\usepackage{enumitem} 
\usepackage{csquotes}
\usepackage{xcolor}
\usepackage[colorlinks]{hyperref}
\pdfstringdefDisableCommands{\def\eqref#1{(\ref{#1})}}
\hypersetup{colorlinks, breaklinks, citecolor=teal,filecolor=black}
\usepackage{graphicx}
\usepackage{mathtools}
\mathtoolsset{centercolon}
\usepackage{bm}
\usepackage{pgfkeys}
\usepackage{pgf,interval}
\intervalconfig{soft open fences}
\DeclareMathOperator{\diag}{diag}
\DeclareMathOperator{\Tr}{Tr}

\DeclareMathOperator{\Var}{Var}
\DeclareMathOperator{\rank}{rank}
\DeclareMathOperator{\Val}{Val}
\DeclareMathOperator{\E}{\mathbf{E}}
\DeclareMathOperator{\Prob}{\mathbf{P}}

\DeclareMathOperator{\ItwoEst}{I_2-Est}
\DeclareMathOperator{\ItwoiEst}{I_2^i-Est}
\DeclareMathOperator{\ItwozEst}{I_2^0-Est}
\DeclareMathOperator{\IthreezEst}{I_3^0-Est}
\DeclareMathOperator{\IthreeEst}{I_3-Est}
\DeclareMathOperator{\IthreeiEst}{I_3^i-Est}

\newcommand{\ov}{\overline}
\newcommand{\red}{\mathrm{red}}
\newcommand{\cyc}{\mathrm{cyc}}

\newcommand{\ii}{\mathrm{i}}

\ifdefined\C
\renewcommand{\C}{\mathbf{C}}
\else
\newcommand{\C}{\mathbf{C}}
\fi
\newcommand{\HC}{\mathbf{H}}

\newcommand{\un}{\underline}
\newcommand{\vx}{\bm{x}}

\newcommand{\vm}{\bm{m}}

\newcommand{\bu}{\bm{u}}
\newcommand{\bq}{\bm{q}}
\newcommand{\vy}{\bm{y}}

\newcommand{\wt}{\widetilde}

\newcommand{\R}{\mathbf{R}}
\newcommand{\N}{\mathbf{N}}

\newcommand{\cG}{\mathcal{G}}
\newcommand{\cO}{\mathcal{O}}
\newcommand{\co}{{\scriptstyle\mathcal{O}}}

\newcommand{\dif}{\operatorname{d}\!{}}
\newcommand\restr[3]{{%
  \left.\kern-\nulldelimiterspace %
  #1 %
  \vphantom{\big|} %
  \right|_{#2}^{#3} %
  }}
\DeclarePairedDelimiter{\braket}{\langle}{\rangle}%
\DeclarePairedDelimiter{\abs}{\lvert}{\rvert}%
\DeclarePairedDelimiter{\norm}{\lVert}{\rVert}%
\providecommand\given{}
\newcommand\SetSymbol[1][]{\nonscript\:#1\vert\allowbreak\nonscript\:\mathopen{}}
\DeclarePairedDelimiterX{\tuple}[1](){\renewcommand\given{\SetSymbol[\delimsize]}#1}
\DeclarePairedDelimiterX{\set}[1]\{\}{\renewcommand\given{\SetSymbol[\delimsize]}#1}
\DeclarePairedDelimiterXPP{\landauO}[1]{\cO}(){}{#1}
\DeclarePairedDelimiterXPP{\landauo}[1]{\co}(){}{#1}
\DeclarePairedDelimiterXPP{\landauOprec}[1]{\cO_\prec}(){}{#1}
\DeclarePairedDelimiterXPP{\landauOstd}[1]{\cO_\prec^2}(){}{#1}
\DeclarePairedDelimiterXPP{\landauOE}[1]{\cO_\prec^1}(){}{#1}
\DeclarePairedDelimiterXPP{\landauOd}[1]{\cO_\mathrm{m}}(){}{#1}

\usepackage{amsmath,enumitem,amsthm,amsaddr}
\usepackage[giveninits=true,url=false,doi=false,isbn=false,eprint=true,datamodel=mrnumber,sorting=nty,maxcitenames=4,maxbibnames=99,backref=false,block=space,backend=biber,bibstyle=phys]{biblatex} 
\AtEveryBibitem{\clearfield{month}}
\AtEveryCitekey{\clearfield{month}} 
\renewbibmacro{in:}{}
\ExecuteBibliographyOptions{eprint=true}
\DeclareFieldFormat[article]{title}{\emph{#1}} 
\DeclareFieldFormat{mrnumber}{\ifhyperref{\href{http://www.ams.org/mathscinet-getitem?mr=#1}{\nolinkurl{MR#1}}}{\nolinkurl{#1}}}
\DeclareFieldFormat{pmid}{\ifhyperref{\href{https://www.ncbi.nlm.nih.gov/pubmed/#1}{\nolinkurl{PMID#1}}}{\nolinkurl{#1}}}
\DeclareFieldFormat{eprint}{\ifhyperref{\href{https://arxiv.org/abs/#1}{\nolinkurl{arXiv:#1}}}{\nolinkurl{#1}}}
\renewbibmacro*{doi+eprint+url}{%
  \iftoggle{bbx:doi}{\printfield{doi}}{}
  \newunit\newblock%
  \printfield{mrnumber}%
  \newunit\newblock%
  \printfield{pmid}%
  \newunit\newblock%
  \printfield{eprint}%
  \iftoggle{bbx:url}{\usebibmacro{url+urldate}}{}} 
\usepackage{ifdraft}
\usepackage{tikz,pgfplots}
\usepackage{etoolbox,listings,xstring}


\makeatletter

\tikzset{
  fl/.style = {path fading=fade l},
  fr/.style = {path fading=fade r},
  wh/.style = {draw=none,fill=none},
  Gc/.style = {draw=none,circle split, inner sep=0pt,minimum size=8pt,rotate=90,path picture={\draw[pattern=#1] (0,0.07) circle (1.5pt); }},
  Gd/.style = {draw=none,circle split, inner sep=0pt,minimum size=8pt,rotate=270,path picture={\draw[pattern=#1] (0,0.07) circle (1.5pt); }},
  Gf/.style={draw=none,circle split, inner sep=1pt,minimum size=8pt,rotate=90},
  G/.style={circle,draw,minimum size=8pt,inner sep=1pt,font=\tiny},
  xx/.style={circle,fill,draw,inner sep=0pt,minimum size=3pt},
  ab/.style={circle,fill,draw=none,inner sep=0pt,minimum size=0pt,as=},
  Gend/.style={inner sep=0pt,minimum size=3pt},
  r/.style={draw=red},
  o/.style={draw=black,fill=white},
  lb/.style args={#1}{label=below:#1},
  lt/.style args={#1}{label=above:#1},
  lr/.style args={#1}{label=right:#1},
  ll/.style args={#1}{label=left:#1},
  fn/.style={fill=none},
  g/.style={postaction={decorate,decoration={
        markings,
        mark=at position .7 with {\arrow[#1]{Stealth[sep=-3pt]}}
      }}},
  s/.style={densely dashed,postaction={decorate,decoration={
        markings,
        mark=at position .7 with {\arrow[#1]{Stealth[sep=-3pt]}}
      }}},
  R/.style ={draw=lightgray, thick,densely dotted,edge node={node[above=-7pt] {\color{gray} $R$ }},anchor=south,pos=0.5,postaction={decoration={
        markings,
        mark=at position .7 with {\arrow[#1]{Stealth[sep=-3pt]}}
      },decorate}},
we/.style args ={#1}{draw=lightgray, thick,densely dotted,edge node={node[above=-7pt] {\color{gray} #1 }},anchor=south,pos=0.5},
IEg/.style ={draw=lightgray, thick,densely dotted},
  S/.style ={draw=lightgray, thick,densely dotted,edge node={node[above=-7pt] {\color{gray}$S$}},anchor=south,pos=0.5},  
  T/.style ={draw=lightgray, thick,densely dotted,edge node={node[above=-7pt] {\color{gray}$T$}},anchor=south,pos=0.5,postaction={decoration={
        markings,
        mark=at position .7 with {\arrow[#1]{Stealth[sep=-3pt]}}
      },decorate}},
  rpd/.style ={draw=lightgray,thick,densely dotted,edge node={node[above=-7pt] {\color{red}$\partial$}},anchor=south,pos=0.5,postaction={decoration={
        markings,
        mark=at position .7 with {\arrow[#1]{Stealth[sep=-3pt]}}
      },decorate}},
  pd/.style ={draw=lightgray,thick,densely dotted,edge node={node[above=-7pt] {\color{gray}$\partial$}},anchor=south,pos=0.5,postaction={decoration={
        markings,
        mark=at position .7 with {\arrow[#1]{Stealth[sep=-3pt]}}
      },decorate}},
  pdr/.style ={g,draw=lightgray,thick,densely dotted,edge node={node[above=-7pt] {\color{red}$\partial$}},anchor=south,pos=0.5},
  pdkr/.style args={#1}{g,draw=lightgray,thick,densely dotted,edge node={node[above=-7pt] { \color{gray}$\partial^{#1}$\color{red}$\partial$}},anchor=south,pos=0.5},
  pdh/.style ={g,draw=lightgray,thick,densely dotted,edge node={node[above=-7pt] {\color{gray}$\widehat\partial$}},anchor=south,pos=0.5},
  pdk/.style args={#1}{g,draw=lightgray,thick,densely dotted,edge node={node[above=-7pt] {\color{gray}$\partial^{#1}$}},anchor=south,pos=0.5},
  eq/.style = {double,postaction={decoration={name=none}}},
  inl/.style args={#1}{initial,initial where=left, initial text=#1,initial distance=10pt},
  pdn/.style args={#1}{g,draw=lightgray,thick,densely dotted,edge node={node[above=-7pt] {\color{gray}$\partial^{#1}$}},anchor=south,pos=0.5},
  inr/.style args={#1}{initial,initial where=right, initial text=#1,initial distance=10pt},
  int/.style args={#1}{initial,initial where=above, initial text=#1,initial distance=10pt},
  inb/.style args={#1}{initial,initial where=below, initial text=#1,initial distance=10pt},
  lpf/.style args={#1}{initial,initial where=left, initial text=$\vp\vf$,initial distance=10pt},
  tpf/.style args={#1}{initial,initial where=above, initial text=$\vp\vf$,initial distance=10pt},
  bpf/.style args={#1}{initial,initial where=below, initial text=$\vp\vf$,initial distance=10pt},
  rpf/.style args={#1}{initial,initial where=right, initial text=$\vp\vf$,initial distance=10pt},
  l1/.style args={#1}{initial,initial where=left, initial text=$\bm 1$,initial distance=10pt},
  t1/.style args={#1}{initial,initial where=above, initial text=$\bm 1$,initial distance=10pt},
  b1/.style args={#1}{initial,initial where=below, initial text=$\bm 1$,initial distance=10pt},
  r1/.style args={#1}{initial,initial where=right, initial text=$\bm 1$,initial distance=10pt},
  lm/.style args={#1}{initial,initial where=left, initial text=$\vm$,initial distance=10pt},
  tm/.style args={#1}{initial,initial where=above, initial text=$\vm$,initial distance=10pt},
  bm/.style args={#1}{initial,initial where=below, initial text=$\vm$,initial distance=10pt},
  rm/.style args={#1}{initial,initial where=right, initial text=$\vm$,initial distance=10pt},
  inr/.style args={#1}{initial,initial where=right, initial text=#1,initial distance=10pt},
  B/.style args={#1}{thick,draw=lightgray,decorate,decoration={snake,amplitude=.4mm,segment length=.8mm,post length=1.4mm},edge node={node[above=-7pt] {\color{gray} #1 }},anchor=south,pos=0.5},
  br/.style = {bend right},
  b0/.style = {bend left=0},
  bl/.style = {bend left},
  glb/.style = {looseness=20,in =220, out=320},
  gll/.style = {looseness=20,in =130, out=230},
  glr/.style = {looseness=20,in =310, out=50},
  glt/.style = {looseness=20,in =40, out=140},
  gm/.style={postaction={decorate,decoration={
        markings,
        mark=at position .3 with {\arrow[#1]{Diamond[open,sep=-3pt,width=5pt]}}
      }}},
}

\newcommand\sGraph[1]{
\begin{tikzpicture}[grow=right,baseline={([yshift=-2pt]current bounding box.center)},font=\footnotesize,>=Stealth]
  \graph[simple necklace layout,components go down left aligned,nodes={draw,circle,as=,minimum size=3pt,inner sep=0pt,fill},node distance = 20pt,component sep=25pt,edges=g]{ #1  };
  \end{tikzpicture}
}

\tikzset{circle split part fill/.style  args={#1}{%
 alias=tmp@name, 
  postaction={%
    insert path={
     \pgfextra{%
     \pgfpointdiff{\pgfpointanchor{\pgf@node@name}{center}}%
                  {\pgfpointanchor{\pgf@node@name}{east}}%
     \pgfmathsetmacro\insiderad{\pgf@x}
      \fill[white,fill opacity=0] (\pgf@node@name.base) ([xshift=-\pgflinewidth]\pgf@node@name.east) arc
                          (0:180:\insiderad-\pgflinewidth)--cycle;
      \fill[fill=white,preaction={fill, white},pattern=#1] (\pgf@node@name.base) ([xshift=\pgflinewidth]\pgf@node@name.west)  arc
                           (180:360:\insiderad-\pgflinewidth)--cycle;   
      \draw[line width=0.4pt] (\pgf@node@name.base) ([xshift=\pgflinewidth]\pgf@node@name.west)  arc
                           (180:360:\insiderad-\pgflinewidth)--cycle;                                
         }}}}}  
\makeatother  
\makeatletter
\tikzset{my loop/.style =  {to path={
  \pgfextra{}
  [looseness=6,min distance=4mm]
  \tikz@to@curve@path},font=\sffamily\small
  }}  
\makeatletter

\csedef{pat0}{}
\csedef{pat1}{north east lines}
\csedef{pat2}{crosshatch dots}
\csedef{pat3}{crosshatch}
\csedef{pat4}{grid}

\definecolor{col0}{HTML}{FFFFFF}
\definecolor{col1}{HTML}{A2B969}
\definecolor{col2}{HTML}{EBCB38}
\definecolor{col3}{HTML}{0D95BC}
\definecolor{col4}{HTML}{063951}
\definecolor{col5}{HTML}{F36F13}
\definecolor{col6}{HTML}{C13018}
\definecolor{lightgray}{HTML}{CCCCCC}

\newcommand\csum[1]{%
\sum_{\forcsvlist{\createColorCircle@item}{#1}}
}

\newcommand\createColorCircle@item[1]{
\StrDel{#1}{h}[\colNum]
\newif\ifhalf
\IfSubStr{#1}{h}{\halftrue}{\halffalse}
{\color{col\colNum}\ifhalf\circ\else\bullet\fi}
}

\newcommand\plotLambda[1]{
  \def\mgraphspecs{}
  \foreach \ll [count = \countl] in {#1} {
    \csedef{firstCol}{col0}
    \def\graphspecs{}
    \StrCount{\ll}{,}[\graphlen]
    \ifnum\graphlen>1 
    \foreach \node [count = \g] in \ll {
      \StrDel{\node}{m}[\nodenumber]
      \StrDel{\nodenumber}{.}[\nodenumber]
      \global\csedef{tempCol}{col\nodenumber}
      \global\csdef{tempPat}{\col{1}}
      \ifnum\g=1
        \global\csedef{firstCol}{\tempCol}
        \IfSubStr{\node}{.}{\global\csedef{open}{y}}{\global\csedef{open}{n}}
        \IfStrEqCase{\open}{
          {y}{\xappto\graphspecs{\countl\g[as=,draw=none] }}
          {n}{\xappto\graphspecs{\countl\g[as=,preaction={fill, white},fill=\tempCol,pattern=\csname pat\nodenumber\endcsname] }}
        }
      \else
        \IfBeginWith{\node}{m}{\global\csedef{secondArrow}{{<[sep=-3pt,length=8pt]}}}{\global\csedef{secondArrow}{}}%
        \ifnum\g=2
          \IfStrEqCase{\open}{
            {y}{\xappto\graphspecs{ --[dotted,thick] \countl\g[as=,preaction={fill, white},fill=\tempCol,pattern=\csname pat\nodenumber\endcsname] }}
            {n}{\xappto\graphspecs{ --[\firstArrow-\secondArrow] \countl\g[as=,preaction={fill, white},fill=\tempCol,pattern=\csname pat\nodenumber\endcsname] }}
          }
        \else
          \xappto\graphspecs{ --[\firstArrow-\secondArrow] \countl\g[as=,preaction={fill, white},fill=\tempCol,pattern=\csname pat\nodenumber\endcsname] }%
        \fi
      \fi
      \IfEndWith{\node}{m}{\global\csedef{firstArrow}{{>[sep=-3pt,length=8pt]}}}{\global\csedef{firstArrow}{}}
      \global\csedef{prevTempCol}{\tempCol} 
    }
    \IfStrEqCase{\open}{
      {n}{\IfBeginWith{\ll}{m}{\global\csedef{secondArrow}{{<[sep=-3pt,length=8pt]}}}{\global\csedef{secondArrow}{}}
          \xappto\graphspecs{ --[\firstArrow-\secondArrow,decorate,decoration={snake,amplitude=.3mm,segment length=.6mm}] \countl1; } }
      {y}{\xappto\graphspecs{ --[dotted,thick] \countl0[draw=none,as=] ; } }
    }
    \else
      \ifnum\graphlen=0
        \StrDel{\ll}{m}[\nodenumber]
        \csedef{tempCol}{col\nodenumber}
        \IfSubStr{\ll}{m}{\csedef{firstArrow}{{>[sep=-3pt,length=8pt]}}}{\csedef{firstArrow}{}}
        \def\graphspecs{ \countl1[as=,preaction={fill, white},fill = \tempCol,pattern=\csname pat\nodenumber\endcsname] --[my loop, decorate,decoration={snake,amplitude=.3mm,segment length=.6mm},\firstArrow-] \countl1; }
      \else
        \IfSubStr{\ll}{.}{
          \StrDel{\ll}{.}[\nodenumber]
          \StrDel{\nodenumber}{,}[\nodenumber]
          \csedef{tempCol}{col\nodenumber}
          \def\graphspecs{ \countl0[draw=none,as=,orient = left] --[dotted,thick] \countl1[as=,fill = \tempCol,pattern=\csname pat\nodenumber\endcsname] --[dotted,thick] \countl2[draw=none,as=,nudge down=10pt]; }
        }{
          \StrBefore{\ll}{,}[\nodeOne]
          \StrBehind{\ll}{,}[\nodeTwo]
          \StrDel{\nodeOne}{m}[\nodeOneNumber]
          \StrDel{\nodeTwo}{m}[\nodeTwoNumber]
          \def\tempColOne{col\nodeOneNumber}
          \def\tempColTwo{col\nodeTwoNumber}
          \IfEndWith{\nodeOne}{m}{\global\csedef{firstArrowOne}{{>[sep=-3pt,length=8pt]}}}{\global\csedef{firstArrowOne}{}}
          \IfEndWith{\nodeTwo}{m}{\global\csedef{firstArrowTwo}{{<[sep=-3pt,length=8pt]}}}{\global\csedef{firstArrowTwo}{}}
          \IfBeginWith{\nodeOne}{m}{\global\csedef{secondArrowOne}{{>[sep=-3pt,length=8pt]}}}{\global\csedef{secondArrowOne}{}}
          \IfBeginWith{\nodeTwo}{m}{\global\csedef{secondArrowTwo}{{<[sep=-3pt,length=8pt]}}}{\global\csedef{secondArrowTwo}{}}
          \def\graphspecs{ \countl1[as=,preaction={fill, white},fill = \tempColOne,pattern=\csname pat\nodeOneNumber\endcsname] --[bend right,decorate,decoration={snake,amplitude=.3mm,segment length=.6mm}, \firstArrowOne-\secondArrowTwo ] \countl2[as=,preaction={fill, white},fill = \tempColTwo,pattern=\csname pat\nodeTwoNumber\endcsname]; \countl1 --[bend left,\secondArrowOne-\firstArrowTwo] \countl2;}
        }
      \fi
    \fi
    \xappto\mgraphspecs{ \graphspecs }
  }
  \xdef\mgraphspecs{\noexpand\graph[simple necklace layout,componentwise,component packing=skyline,components go right center aligned,orient=0,nodes=G]{ \mgraphspecs }}
  \begin{tikzpicture}[baseline={([yshift=-2pt]current bounding box.center)},font=\tiny,>=Stealth, node distance = 15pt,node sep=12pt,component sep=5pt]
    \mgraphspecs;
  \end{tikzpicture}%
}

\newcommand\pB[1]{
  \StrDel{#1}{h}[\patNum]
  \IfSubStr{#1}{h}{
  \begin{tikzpicture}[baseline={([yshift=-2pt]current bounding box.center)},font=\tiny,>=Stealth, node distance = 0pt,node sep=1pt,component sep=1pt]
   \graph[simple necklace layout,componentwise,component packing=skyline,components go right center aligned,orient=0,nodes=G] { 1[Gf,rotate=270,circle split part fill={\csname pat\patNum\endcsname},as=,minimum size=6pt]; };
  \end{tikzpicture}
  }{
  \begin{tikzpicture}[baseline={([yshift=-2pt]current bounding box.center)},font=\tiny,>=Stealth, node distance = 0pt,node sep=1pt,component sep=1pt]
   \graph[simple necklace layout,componentwise,component packing=skyline,components go right center aligned,orient=0,nodes=G] { 1[pattern=\csname pat\patNum\endcsname,as=,minimum size=6pt]; };
  \end{tikzpicture}
  }
}
\newcommand\plotlLambda[1]{
  \def\mgraphspecs{}
  \foreach \ll [count = \countl] in {#1} {
    \csedef{firstCol}{col0}
    \csedef{openEnd}{no}
    \def\graphspecs{}
    \StrCount{\ll}{,}[\graphlen]
    \foreach \node [count = \g] in \ll {
      \StrDel{\node}{m}[\nodenumber]
      \csedef{type}{n}
      \csedef{marking}{no}
      \IfSubStr{\node}{x}{\csedef{type}{x}}{}
      \IfSubStr{\node}{c}{\csedef{type}{c}}{}
      \IfSubStr{\node}{d}{\csedef{type}{d}}{}
      \IfSubStr{\node}{.}{\csedef{type}{open}}{}
      \IfBeginWith{\node}{m}{\csedef{firstMarking}{yes}\csedef{marking}{yes}}{\csedef{firstMarking}{no}}
      \IfEndWith{\node}{m}{\csedef{secondMarking}{yes}\csedef{marking}{yes}}{\csedef{secondMarking}{no}}
      \StrDel{\nodenumber}{.}[\nodenumber]
      \StrDel{\nodenumber}{c}[\nodenumber]
      \StrDel{\nodenumber}{d}[\nodenumber]
      \StrDel{\nodenumber}{x}[\nodenumber]
      \global\csedef{tempCol}{col\nodenumber}
      \ifnum\g=1
        \global\csedef{firstCol}{\tempCol}
        \IfStrEq{\type}{open}{
          \xappto\graphspecs{ \countl0[xx,as=,grow right,draw=none,fill=white] --[dotted,thick] \countl1[fill=white,preaction={fill, white},pattern=\csname pat\nodenumber\endcsname,as=]}
        }{
        \IfSubStr{\node}{x}{
          \xappto\graphspecs{ \countl0[xx,as=,grow right,fill=lightgray,draw=none] --[color=lightgray]\countl1[Gf,circle split part fill={\csname pat\nodenumber\endcsname},as=]}}
        {
          \IfStrEq{\marking}{yes}
          {
            \xappto\graphspecs{ \countl0[xx,as=,grow right] --[-{<[fill=lightgray,color=black,sep=-3pt,length=8pt]}] \countl1[fill=white,preaction={fill, white},pattern=\csname pat\nodenumber\endcsname,as=]}
          }{
            \xappto\graphspecs{ \countl0[xx,as=,grow right] -- \countl1[fill=white,preaction={fill, white},pattern=\csname pat\nodenumber\endcsname,as=]}
          }
        }}
      \else
        \IfStrEq{\firstMarking}{yes}{\csedef{secondArrow}{{<[sep=-3pt,length=8pt]}}}{\csedef{secondArrow}{}}
        \IfStrEqCase{\type}{
          {n}{\xappto\graphspecs{ --[\firstArrow-\secondArrow] \countl\g[fill=white,preaction={fill, white},pattern=\csname pat\nodenumber\endcsname,as=] }}%
          {c}{\xappto\graphspecs{ --[\firstArrow-\secondArrow] \countl\g[Gc=\csname pat\nodenumber\endcsname,rotate=180,circle split part fill={\csname pat\nodenumber\endcsname},as=] }}%
          {d}{\xappto\graphspecs{ --[\firstArrow-\secondArrow] \countl\g[Gd=\csname pat\nodenumber\endcsname,rotate=180,circle split part fill={\csname pat\nodenumber\endcsname},as=] }}%
          {open}{\global\csedef{openEnd}{yes}}%
        }
      \fi
      \IfStrEq{\secondMarking}{yes}{
        \global\csedef{firstArrow}{{>[sep=-3pt,length=8pt]}}%
      }{
        \global\csedef{firstArrow}{}%
      }
      \global\csedef{prevTempCol}{\tempCol} 
    }
    \IfStrEqCase{\openEnd}{
      {yes}{\xappto\mgraphspecs{ \graphspecs --[dotted,thick] \countl17[Gend,as=,draw=none]; }}%
      {no}{\xappto\mgraphspecs{ \graphspecs -- \countl17[Gend,as=]; }}%
    }%
  }
  \xdef\mgraphspecs{\noexpand\graph[tree layout,componentwise,component packing=skyline,components go down left aligned,nodes=G]{ \mgraphspecs }}
  \begin{tikzpicture}[grow=right,baseline={([yshift=-2pt]current bounding box.center)},font=\tiny,>=Stealth, node distance = 5pt,node sep=12pt,component sep=5pt]
  \mgraphspecs;
  \end{tikzpicture}%
}

\usepgfplotslibrary{external} 
\tikzexternalize%
\tikzsetfigurename{eth-figure} 
\bibliography{bibclt}
\usepackage{nameref,zref-xr}
\zxrsetup{toltxlabel=true, tozreflabel=false}
\zexternaldocument*[funcCLT-]{functional_clt_v4}
\bibliography{bibclt}

\newlist{genprop}{enumerate}{1}
\newlist{isoprop}{enumerate}{1}
\newlist{avprop}{enumerate}{1}
\setlist[genprop]{label=(P\arabic*)}
\setlist[isoprop]{label=(P\(^\mathrm{iso}\)\arabic*)}
\setlist[avprop]{label=(P\(^\mathrm{av}\)\arabic*)}

\numberwithin{equation}{section} 
\newtheorem{theorem}{Theorem}[section]
\newtheorem{assumption}[theorem]{Assumption}
\newtheorem{lemma}[theorem]{Lemma}
\newtheorem{proposition}[theorem]{Proposition}
\newtheorem{definition}[theorem]{Definition}

\newtheorem{remark}[theorem]{Remark}

\date{\today}
\author{Giorgio Cipolloni \and L\'aszl\'o Erd\H{o}s}
\address{IST Austria, Am Campus 1, 3400 Klosterneuburg, Austria}
\author{Dominik Schr\"oder\(^{\ast}\)}
\address{Institute for Theoretical Studies, ETH Zurich, Clausiusstr.\ 47, 8092 Zurich, Switzerland}
\email{giorgio.cipolloni@ist.ac.at} 
\email{lerdos@ist.ac.at}
\email{dschroeder@ethz.ch}
\thanks{\(^\ast\)Supported by Dr.\ Max R\"ossler, the Walter Haefner Foundation and the ETH Z\"urich Foundation}
\subjclass[2010]{60B20, 15B52, 58J51, 81Q50} 
\keywords{Eigenstate Thermalization Hypothesis, Feynman diagrams, Local Law, Quantum Unique Ergodicity}
\title{Eigenstate Thermalization Hypothesis for Wigner Matrices}
\date{\today}
\begin{document} 
\thispagestyle{empty}

\begin{abstract}
  We prove that any deterministic matrix is
  approximately the identity in the eigenbasis of a large random Wigner matrix
  with very high probability and with an optimal error inversely proportional
  to  the square root of the dimension. 
  Our theorem thus  rigorously verifies the
  Eigenstate Thermalisation Hypothesis by Deutsch~\cite{9905246}
  for the simplest chaotic
  quantum system, the Wigner ensemble.  In mathematical terms,
  we prove the strong form of Quantum Unique Ergodicity (QUE) with an optimal convergence rate for all eigenvectors simultaneously,
  generalizing previous probabilistic QUE  results in~\cite{MR3606475} and~\cite{MR4156609}.
\end{abstract}

\maketitle

\section{Introduction} 
Since the groundbreaking discovery of E. Wigner~\cite{MR77805} postulating that  Hermitian random matrices
can effectively model the universal  statistics of  gaps between energy levels of large atomic nuclei, simple
random matrices have  been routinely used to replace more complicated quantum
Hamilton operators for many other physically relevant problems, especially in disordered or chaotic quantum systems.
A fundamental phenomenon of such systems is \emph{Quantum Ergodicity (QE)}, stating that the eigenvectors  tend to become 
uniformly distributed  in the phase space.

In this paper we study an enhanced version of  this  question, the  \emph{Quantum Unique Ergodicity (QUE)}, 
for real or complex Wigner matrices and for general observables. We recall
that the Wigner matrix ensemble consists of \(N\times N\)  random Hermitian   matrices \(W=W^*\)
with centred, independent, identically distributed \emph{(i.i.d.)} entries up to the symmetry constraint \(w_{ab} = \ov{w_{ba}}\).
Let \(\set{\bu_i}_{i=1}^N\) be an orthonormal eigenbasis of \(W\). 
Our main  Theorem~\ref{theorem overlap}  asserts that
for any deterministic  matrix \(A\)  with \(\|A\|\le1\) we have the limit \(\braket{\bu_i, A \bu_j}\to\delta_{ij}\braket{A}\) with very high probability (and hence uniform in \(i,j\)) with optimal speed of convergence \(1/\sqrt{N}\), i.e.
\begin{equation}\label{eq:eth}
  \max_{ij}\abs[\Big]{\braket{ \bu_i, A \bu_j}-\braket{A}\delta_{ij}} 
  \lesssim \frac{N^\epsilon}{\sqrt{N}} .
\end{equation}  
Here we introduced the shorthand notation 
\(\langle R \rangle:=\frac{1}{N}\Tr R\) for the normalized trace of any \(N\times N\) matrix. 
In other words,~\eqref{eq:eth} establishes the \emph{QUE in strong form} (i.e.\ uniformly in \(i,j\)) for any Wigner matrix, 
and shows that the action of any bounded traceless deterministic matrix on the eigenbasis \(\set{\bu_i}_{i=1}^N\) makes it  \emph{asymptotically orthogonal} to itself (up to an optimal error \(N^{-1/2}\)).
For genuinely complex Wigner matrices our second main Theorem~\ref{theo:transpoverlap} asserts that
\begin{equation}\label{eq:ethbar}
  \max_{ij} \abs[\Big]{\braket{ \bu_i, \ov{\bu_j}}} \lesssim \frac{N^\epsilon}{\sqrt{N}},
\end{equation}  
again  with very high probability, showing that the eigenbases of \(W\) and \(W^t=\ov{W}\) are asymptotically orthogonal.

The question of ergodicity for general observables is also known as the \emph{Eigenstate Thermalization 
Hypothesis (ETH)} in the physics literature since
the seminal papers of Deutsch~\cite{9905246}
and  Srednicki~\cite{9962049}, 
see also~\cite{Dalessio2015} and~\cite{29862983} for reviews
and further references. Our result thus proves ETH with an optimal speed of
convergence as predicted, e.g., in~\cite[Eqs.~(20)]{Dalessio2015}
for the simplest chaotic quantum system, the Wigner ensemble.

Historically, the most prominent  model for quantum ergodicity
is  the natural quantization of a chaotic classical dynamical system in the semiclassical or
in the high-energy regimes.  The first  mathematical result on QE
was obtained  by Shnirelman~\cite{MR0402834}. It asserts that for \emph{most} 
high energy (normalized) eigenfunctions \(\psi_i\) of the Laplace-Beltrami operator on a surface  with ergodic geodesic flow 
the measures \(|\psi_i(x)|^2 {\rm d} x\) 
become completely flat as \(i\to \infty\). This result was later extended by Colin de Verdi\'ere~\cite{MR818831}
and Zelditch~\cite{MR916129} for much larger classes of observables showing that
if \(A\) is an appropriate pseudodifferential operator with symbol \(\sigma(A)\),  then
\(\langle \psi_i, A \psi_j\rangle \to \delta_{ij} \int_{S^*} \sigma(A)\) 
for \emph{most} index pairs as \(i,j\to \infty\), where \(S^*\) is the unit cotangent bundle of the surface. The analogous result on  large regular graphs was obtained by Anantharaman and Le Masson~\cite{MR3322309}. The celebrated Quantum Unique Ergodicity (QUE)
conjecture, formulated by Rudnick and Sarnak~\cite{MR1266075} in 1994, is a natural strengthening  of these
results  stating that the same limits hold for \emph{all} indices excluding that
exceptional sequences  may exhibit exotic behaviour (\emph{scarring}).
QUE in this form is still an outstanding open question;
only certain special cases have been proven, e.g.\ on arithmetic surfaces for the joint eigenfunctions
of the Laplacian and the Hecke operator by Lindenstrauss~\cite{MR2195133}, with Soundararajan's 
extension~\cite{MR2680500}, see also~\cite{MR3260861,MR2680499}.

The speed of convergence in quantum ergodicity has been a fundamental question in the theory of 
 quantum chaos, see e.g.~\cite{MR2757360} for a review and~\cite{MR2248896} for numerical results. For strongly chaotic (hyperbolic) systems the general physics prediction is
that the variance of \(\langle \psi_i, A \psi_j\rangle\) is proportional with the inverse of the Heisenberg
time, roughly speaking the local eigenvalue spacing
 (see e.g.~\cite[Eq.~(24)]{9964105} %
 building upon earlier results by Feingold and Peres~\cite{9897286}).
For the hyperbolic geodesic flow on general Riemannian manifolds only inverse logarithmic decay 
has been proven  by Zelditch~\cite{MR1262192}  
and Schubert~\cite{MR2267060}
which is even optimal for a special highly degenerate eigenbasis of the quantization of Arnold's cat map~\cite{MR2465731}, see also~\cite{MR3732880} for surfaces of high genus. For similar quantitative QUE results on large deterministic graphs see~\cite{MR3038543,MR3649482,MR3961083}. 
Much stronger polynomial bounds hold for special arithmetic surfaces proven by Luo and Sarnak~\cite{MR1361757}
and for linear  maps on the torus~\cite{MR2150390,MR1810753} and toral eigenfunctions~\cite{MR3662015}. For random \(d\)-regular graph optimal polynomial speed of convergence for QUE with diagonal observables has been obtained in~\cite{MR3962004,MR3688032}.

For   large Wigner matrices, using the Dyson Brownian motion (DBM) for eigenvectors,  Bourgade and Yau~\cite{MR3606475} showed  that for any fixed deterministic unit vector  \(\bq\) and any \(i\)
in the bulk spectrum or close to the edge, the squared overlaps
\(N |\langle \bu_i, \bq\rangle |^2\) converge in distribution  to the square of a standard Gaussian as \(N\to\infty\)
(see also~\cite{MR4164858} for deformed Wigner matrices and~\cite{MR3034787, MR2930379} for
the same result under four moment matching condition in the bulk). 
The corresponding a priori bound, asserting that 
\(N |\langle \bu_i, \bq\rangle |^2\lesssim N^\epsilon\) with very high  probability for any \(\epsilon>0\), 
has been  known beforehand as the \emph{complete delocalisation of eigenvectors}~\cite{MR3183577, MR2481753, MR2871147, MR3103909}. DBM methods allow to obtain optimal delocalisation estimates~\cite{2007.09585}. We mention that~\cite{MR3606475} also obtains asymptotic normality for the  joint distribution of
finitely many eigenvectors tested against one fixed vector \(\bq\)
and for the joint distribution 
of a single eigenvector with finitely many test vectors \(\bq_1, \bq_2, \ldots \bq_K\). Very recently the joint normality of
finitely many eigenvectors and finitely many test vectors has also been achieved~\cite{2005.08425}.

These results based upon DBM establish the universality of Gaussian fluctuation for individual eigenvectors tested against 
\emph{finite rank}  observables, \(A=\frac{N}{K}\sum_{k\le K} a_k |\bq_k\rangle\langle\bq_k|\), with \(K\) being \(N\)-independent
and \(a_k\in[-1,1]\). The key mechanism of QUE for general  observables is
the self-averaging (ergodic) property of this sum as the rank  \(K= K(N)\) tends to infinity.
As a simple corollary of the fluctuation results, QUE in a weak form was also obtained
asserting that  \(\langle \bu_i, A \bu_i\rangle \to  \langle  A\rangle\) in probability for any fixed \(i\) in the bulk
if \(\rank(A)\) grows with \(N\), see~\cite[Corollary 1.4]{MR3606475}
(this result was stated only for diagonal matrices \(A\), but it directly generalizes to any \(A\)
by spectral decomposition).
However, the effective probabilistic estimates in~\cite{MR3606475} were not 
sufficient  to prove the strong form of QUE, i.e.
to guarantee that the limit holds for all eigenvectors simultaneously.
This uniformity was proven in~\cite[Theorem 2.5]{MR4156609}
but only for random matrices with a Gaussian component of size \(t\gg 1/N\)
with an error of order \(1/\sqrt{Nt}\). An off-diagonal version, \(\langle \bu_i, A \bu_j\rangle \to 0\) for \(i\ne j\), coined
as quantum weak mixing, was also obtained  in~\cite{MR4156609} and strengthened in~\cite{1908.10855}. Standard Green function comparison arguments 
may be used to remove the large Gaussian component but only with a considerably suboptimal error 
or under the extra assumption of matching the first several (in fact  more than four) moments of the matrix elements of \(W\) with 
those of the Gaussian GOE/GUE ensemble.

Summarizing,  our Theorem~\ref{theorem overlap}  generalizes the
probabilistic  QUE proven in~\cite[Corollary 1.4]{MR3606475} and in~\cite[Theorem 2.5]{MR4156609}
to general Wigner ensembles
in three  aspects:  (i) the speed of convergence is optimal (up to an \(N^\epsilon\) factor);
(ii) the limit  is controlled  in very high probability, and (iii) it holds uniformly throughout the spectrum
including bulk, edge and the intermediate regime. For any deterministic Hermitian observable written in spectral decomposition \(A=\sum_{k=1}^N a_k |\bq_k\rangle \langle \bq_k|\), our main result,
\begin{equation}\label{eq:etherr}
  \abs*{\langle \bu_i, A \bu_j\rangle -  \delta_{ij}\langle A\rangle} %
  =  \abs*{\frac{1}{N}\sum_{k=1}^N a_k \Bigl( N \langle \bu_i, \bq_k \rangle \langle \bq_k, \bu_j \rangle - \delta_{ij} \Bigr)} \lesssim \frac{N^\epsilon}{\sqrt{N}},
\end{equation} 
shows that the fluctuations of 
\(N \langle \bu_i, \bq_k \rangle \langle \bq_k, \bu_j \rangle \)
are so strongly asymptotically independent for different \(k\)'s that their average 
has the expected \(1/\sqrt{N}\) fluctuation scaling reminiscent to 
the central limit theorem, up to an \(N^\epsilon\) factor. In fact, in our companion paper~\cite[Theorem~\ref{funcCLT-theo:sharpcom}]{2012.13218} 
we also show that the diagonal overlaps \(\braket{\bu_i,A\bu_i}\), after a small averaging in the index \(i\), satisfy a CLT\@.

Next we outline the novel ideas of our  proof.
We consider a spectrally averaged version of   the overlaps
\begin{equation}\label{defL}
  \Lambda^2:=  \max_{i_0, j_0}\frac{1}{(2J)^2}\sum_{\substack{|i-i_0|< J\\ |j-j_0|< J}} N |\langle \bu_i, A\bu_j\rangle|^2
\end{equation}
for  bounded traceless observables, \(\langle A\rangle=0\), \(\|A\|\le 1\), 
where \(J= N^\epsilon\) with some tiny \(\epsilon>0\). Our goal is to show that \(\Lambda\) 
is essentially of order one, with high probability.
Denoting by \(G=G(z)= (W-z)^{-1}\) the resolvent
at \(z\in \HC\), notice that, by spectral decomposition,
\begin{equation}\label{LamIm}
  \Lambda^2 \sim \sup_{E, E'\in [-2,2]} (\rho\rho')^{-1} \langle \Im G(E+\ii \eta) A \Im G (E'+\ii\eta') A\rangle,
\end{equation}
where \(\eta\) is slightly above the local eigenvalue spacing at \(E\) and \(\rho\) is the semicircular density 
at \(E\) smoothed out on scale \(\eta\); the primed quantities defined analogously. The main work consists in
proving a high probability  optimal bound on the quadratic functional of the resolvent \(\langle GAGA\rangle\),
with possible imaginary parts and at different spectral parameters. Note that for overlaps with a rank one observable, \(A= |\bq\rangle\langle \bq|\), it is
sufficient to control \(\langle \bu_i, A\bu_i\rangle= |\langle \bq, \bu_i\rangle|^2\). After a mild local averaging in the index \(i\) this
becomes comparable with \(\langle \bq, ( \Im G)\bq\rangle\) whose control is  equivalent to a conventional 
single-\(G\) isotropic local law. This  served as a natural input for the DBM proofs on eigenvectors in~\cite{MR3606475, MR4156609}. For  traceless observables, however, \(\langle \bu_i, A\bu_i\rangle\) does not have a sign,
so we need to consider \(|\langle \bu_i, A\bu_i\rangle|^2\) to understand its size, hence the relevant quantity 
is \(\Lambda^2\) containing two \(G\) factors~\eqref{LamIm},  i.e.\ single-\(G\) local laws
are not sufficient. 

For estimating~\eqref{LamIm} we face a combination of two serious difficulties. First, we need to gain an additional
cancellation  from the fact that \(A\) is traceless; second, we need to handle local laws for products of several \(G\)'s. The first issue already arises
on the level of a single-\(G\) local law: In Theorem~\ref{pro:1g} we will prove that the resolvent approximation \(G\approx m\) by the Stieltjes transform \(m=m(z)\) of the Wigner semicircular density, commonly referred to as a \emph{local law}, holds to a higher accuracy when tested against a traceless observable. More precisely for the decomposition \(\braket{GA} = \braket{A}\braket{G} + \braket{G(A-\braket{A})}\) and with \(\rho:=\abs{\Im m}/\pi\) we have that 
\begin{equation}\label{eq:Gloc}
  \braket{G}= m+\landauO*{\frac{1}{N\eta}}, \quad \braket{G(A-\braket{A})} = \landauO*{\frac{\rho^{1/2}}{N\eta^{1/2}}},
\end{equation}
with both errors being optimal, in fact they identify the scale of the asymptotic Gaussian fluctuation of  $\braket{G}$ and $ \braket{G(A-\braket{A})}$, respectively~\cite{MR3678478,2012.13218}.
Note that the error term for the traceless part is much smaller than that  for $\braket{G}$ in the relevant small $\eta$ regime.  For \(\braket{ GAG^*A}\) the
discrepancy is even bigger; without zero trace assumption \(\braket{ GAG^*A} \sim 1/\eta\) (e.g.\ for \(A=I\)), while for \(\langle A\rangle=0\)  we will 
show that \(\langle GAG^*A\rangle \sim 1\)  even for very small \(\eta\). 

The second issue touches upon the  basic mechanism of the standard  proof of the local laws. It consists in deriving an approximate self-consistent
equation for the quantity in question, e.g.\ \(\langle GAGA\rangle\), and compare it with the corresponding deterministic
equation \emph{(Dyson equation)} without approximation error. The  main error term  \(\langle \underline{WGAGA}\rangle\),
a renormalized version of  \(\langle WGAGA\rangle\), see~\eqref{eq:defunder},  
is expected to be  smaller than \(\langle GAGA\rangle\), but when estimating its high moments by a cumulant 
expansion  many terms with traces of more than two \(G\)-factors emerge.
Trivial a priori bounds using \(\| G\|\le1/\eta\) are not affordable, so one has to continue expanding, resulting in higher and higher
degree monomials in \(G\); reminiscent
to the notoriously difficult closure problem in the BBGKY hierarchy for the correlation functions 
of  interacting particle dynamics. In the proof of the conventional local law \(\braket{G}=m+\landauO{1/N\eta}\), the expansion is stopped by using the Ward identity \(GG^* = \Im G/\eta\), reducing the number of \(G\) factors by one. However, with a deterministic matrix in between, as in \(GAG^*\), Ward identity is not applicable. A trivial Schwarz bound followed by Ward identity,
\begin{equation}\label{Schw}
  |\langle GAG^*A \rangle| \le \langle GAA^*G^* \rangle = \frac{1}{\eta} \langle (\Im G)AA^* \rangle,
\end{equation}
is available, but at the expense of replacing the traceless matrix \(A\) with the non-zero trace matrix \(AA^*\), hence losing the main cancellation effect that we cannot afford. Our main idea is to use \(\Lambda\) from~\eqref{defL} as
the basic control quantity and  derive a stochastic Gronwall inequality for it. In doing so, we use the spectral decomposition of \(G\) to estimate traces of products of many  \(G's\) and \(A'\)s by the  lower degree term \(\langle GAGA\rangle\). Technically, this requires to extract 
sufficiently many \(\Lambda\)-factors  in the cumulant expansion, which we achieve by a subtle Feynman graph analysis to estimate all  high moments of \(|\langle \underline{WGAGA}\rangle|\).

Feynman diagrams have been systematically used to organize  cumulant expansions and their estimates come
on different levels of sophistication, see e.g.~\cite{MR3941370, MR4134946, 1912.04100}, but also related
expansions in random matrices, e.g.~\cite{MR3119922, MR3302637,MR3485343}. 
For the proof of~\eqref{eq:Gloc} via cumulant expansion (e.g.\ following~\cite{MR3941370}),
it is sufficient to monitor 
the number of \(N\)-factors (from the size of the cumulants and from the summation of intermediate indices)
and the number of \(\rho/\eta\) factors from the Ward identity. 
In the current  analysis we additionally  need to monitor the \(\Lambda\) factors. While the number of traceless \(A\)-factors is preserved along the expansion, but the cancellation effect of some of them may be lost as in~\eqref{Schw}. Our proof has to carefully offset all such  losses by the gains from higher order cumulants that typically accompany the loss of effective \(A\)-factors. In particular, since we are aiming at an optimal bound, in the expansion terms that involve only second order cumulants
we need to gain from \emph{all} \(A\)-factors.  We used a similar but much simpler expansion in our work on CLT for non-Hermitian random matrices, see~\cite[Prop.~5.3, Eq.~(5.10c)]{1912.04100}, where 
the additional smallness came from the large distance between two
(non-Hermitian) spectral parameters \(z_1, z_2\). However, in~\cite{1912.04100}  it was sufficient to gain 
only a small  proportion of all possible smallness factors since we did not aim
at the optimal bound. In the current paper, using a refined combinatorics
we manage to extract the zero trace orthogonality effect to the maximal extent; this is the key to obtain the optimal error bound in~\eqref{eq:etherr}. Similarly, for the proof of~\eqref{eq:ethbar} we manage to extract the \emph{asymptotic orthogonality} effect between the eigenvectors \(\bm u_i\) and their complex conjugates \(\ov{\bm u_i}\) optimally, resulting in the bound \(\abs{\braket{GG^t}}\lesssim 1\), gaining a full power of \(\eta\) over e.g.\ \(\braket{GG^\ast}\sim 1/\eta\).

After this introduction and presenting the main results in the next Section~\ref{sec main results}, we prove the local laws
involving two resolvents in Section~\ref{sec que}. The main inputs  for them are  the  improved bounds on renormalized (``underlined'') monomials in several \(G\)'s in Theorem~\ref{chain G underline theorem} that are proven in Section~\ref{sec cum exp}. Note that even though we are interested in local laws only with two \(G\)'s, due
to the cumulant expansion we need to control 
arbitrary long monomials involving  a product of \(G\)'s and \(A\)'s.

\subsection*{Notations and conventions}

We introduce some notations we use throughout the paper. For integers \(k\in\N \) we use the notation \([k]:= \set{1,\ldots, k}\). We write \(\HC \) for the upper half-plane \(\HC := \set{z\in\C \given \Im z>0}\). For positive quantities \(f,g\) we write \(f\lesssim g\) and \(f\sim g\) if \(f \le C g\) or \(c g\le f\le Cg\), respectively, for some constants \(c,C>0\) which depend only on the constants appearing in~\eqref{eq:momentass}. We denote vectors by bold-faced lower case Roman letters \({\bm x}, {\bm y}\in\C ^k\), for some \(k\in\N\). Vector and matrix norms, \(\norm{\vx}\) and \(\norm{A}\), indicate the usual Euclidean norm and the corresponding induced matrix norm. For any \(N\times N\) matrix \(A\) we use the notation \(\braket{ A}:= N^{-1}\Tr  A\) to denote the normalized trace of \(A\). Moreover, for vectors \({\bm x}, {\bm y}\in\C^N\) we define
\[ \braket{ {\bm x},{\bm y}}:= \sum \overline{x}_i y_i, \qquad A_{\vx\vy}:=\braket{\vx,A\vy},\]
with \(A\in\C^{N\times N}\). We will use the concept of ``with very high probability'' meaning that for any fixed \(D>0\) the probability of the \(N\)-dependent event is bigger than \(1-N^{-D}\) if \(N\ge N_0(D)\). Moreover, we use the convention that \(\xi>0\) denotes an arbitrary small constant which is independent of \(N\).

\section{Main results}\label{sec main results}

We consider real symmetric or complex Hermitian \(N\times N\) Wigner matrices \(W\). We formulate the following assumptions on the entries of \(W\). 

\begin{assumption}\label{ass:entr}
  The matrix elements \(w_{ab}\) are independent up to the Hermitian symmetry \(w_{ab}=\overline{w_{ba}}\). 
  We assume identical distribution in the sense that \(w_{ab}\stackrel{\mathrm{d}}{=} N^{-1/2}\chi_{\mathrm{od}}\), for \(a<b\), \(w_{aa}\stackrel{\mathrm{d}}{=}N^{-1/2} \chi_{\mathrm{d}}\), with \(\chi_{\mathrm{od}}\) being a real or complex random variable and \(\chi_{\mathrm{d}}\) being a real random variable such that \(\E \chi_{\mathrm{od}}=\E \chi_{\mathrm{d}}=0\) and \(\E |\chi_{\mathrm{od}}|^2=1\). 
  In the complex case we also assume that \(\E \chi_{\mathrm{od}}^2\in\R\). In addition, we assume the existence of the high moments of \(\chi_{\mathrm{od}}\), \(\chi_{\mathrm{d}}\), i.e.\ that there exist constants \(C_p>0\), for any \(p\in\N \), such that
  \begin{equation}\label{eq:momentass}
    \E |\chi_{\mathrm{d}}|^p+\E |\chi_{\mathrm{od}}|^p\le C_p.
  \end{equation}
  In this paper we use the notations \(w_2:=\E \chi_{\mathrm{d}}^2\), \(\sigma:=\E \chi_{\mathrm{od}}^2\) and their commonly occurring combination \(\widetilde{w_2}:=w_2-1-\sigma\),  
  and note that \(w_2,\widetilde{w_2},\sigma\in \R\).
\end{assumption}

Our first main result is the proof of the \emph{Eigenstate Thermalisation Hypothesis}, that in mathematical terms is the proof of an optimal convergence rate of the strong \emph{Quantum Unique Ergodicity (QUE)} for general observables uniformly in the spectrum of \(W\). 

\begin{theorem}[Eigenstate Thermalization Hypothesis]\label{theorem overlap}
  Let \(W\) be a Wigner matrix satisfying Assumption~\ref{ass:entr}, and denote by \(\bm u_1,\dots,\bm u_N\) its orthonormal
  eigenvectors. Then for any deterministic matrix \( A\) with \(\norm{A}\lesssim 1\) it holds
  \begin{equation}\label{eq:eth1}
    \max_{i,j\in [N]}\abs{\braket{\bm u_i, A \bm u_j}-\braket{A}\delta_{ij}}+\max_{i,j\in [N]}\abs{\braket{\bm u_i, A \overline{\bm u_j}}-\braket{A}\braket{\bm u_i,\overline{\bm u_j}}} \le  \frac{N^\xi}{\sqrt{N}},
  \end{equation}
  with very high probability for any arbitrary small \(\xi>0\).
\end{theorem}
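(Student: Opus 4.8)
The plan is to reduce Theorem~\ref{theorem overlap} to the averaged quantity $\Lambda$ from~\eqref{defL} and to an optimal bound on a two-resolvent local law, and then to use a chaining (grid plus stochastic continuity) argument to upgrade the bound to hold for all eigenvectors simultaneously with very high probability. First I would split a general $A$ with $\|A\|\lesssim 1$ into its trace part $\braket A I$ and its traceless part $\mathring A := A - \braket A I$; the trace part contributes exactly $\braket A \delta_{ij}$ to $\braket{\bu_i,A\bu_j}$ by orthonormality, so we may assume $\braket A = 0$. Likewise $\mathring A$ can be written as a combination of (at most four) Hermitian traceless matrices, so it suffices to treat Hermitian traceless $A$ with $\|A\|\lesssim 1$. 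For such $A$, spectral decomposition $G(z)=\sum_k (\lambda_k-z)^{-1}\ket{\bu_k}\bra{\bu_k}$ gives, for $z=E+\ii\eta$, $z'=E'+\ii\eta'$ in the bulk/edge regime with $\eta,\eta'$ slightly above the local spacing,
\begin{equation}\label{specdec}
  \braket{\Im G(z)\, A\, \Im G(z')\, A} = \frac{\pi^2}{N}\sum_{i,j} \rho_\eta(E-\lambda_i)\,\rho_{\eta'}(E'-\lambda_j)\, \abs{\braket{\bu_i,A\bu_j}}^2,
\end{equation}
which, after bounding the mollified densities from below on the relevant index windows of size $J=N^\epsilon$, shows that $\Lambda^2 \lesssim N^{C\epsilon}\sup_{E,E'}(\rho\rho')^{-1}\abs{\braket{\Im G(z)A\Im G(z')A}}$ as in~\eqref{LamIm}. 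The same spectral identity, summing against a single index window, also controls the off-diagonal overlaps $\braket{\bu_i,A\bu_j}$, $i\ne j$, and the $\braket{\bu_i,\ov{\bu_j}}$ piece is handled analogously via $\braket{\Im G\, A\, (\Im G)^t A}$-type quantities and the $\abs{\braket{GG^t}}\lesssim 1$ bound mentioned in the introduction.

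The heart of the matter is then the two-$G$ local law asserting $\abs{\braket{G(z)AG(z')A}}\lesssim 1$ (with appropriate $\rho$-factors when imaginary parts are taken), uniformly for spectral parameters with $N\eta,N\eta'\gtrsim 1$, for traceless $\|A\|\lesssim 1$. This is precisely the content of the two-resolvent local laws proved in Section~\ref{sec que}, whose key input is the improved bound on renormalized monomials $\braket{\underline{WG_1A G_2 A \cdots}}$ in Theorem~\ref{chain G underline theorem}. Granting that result, one derives a self-consistent (Dyson-type) equation for $\braket{GAGA}$ whose error term is $\braket{\underline{WGAGA}}$ plus lower-order pieces; inverting the stability operator of the Dyson equation (which is well-conditioned away from the instability directions precisely because $\braket A = 0$ kills the dangerous direction) yields $\braket{GAGA}\lesssim 1$. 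To make the argument self-consistent at small $\eta$ one bootstraps: assume $\Lambda \le \Lambda_0$ on a high-probability event, feed this into the spectral-decomposition estimates for the long monomials appearing after cumulant expansion (using $\eqref{specdec}$ to trade products of many $G$'s and $A$'s against $\Lambda^2$ times lower-degree quantities), obtain a bound of the schematic form $\Lambda^2 \le N^{C\epsilon}(1 + \Lambda^{2-\delta}/(N\eta)^{c} + \dots)$, and conclude by a stochastic Gronwall / self-improvement step that $\Lambda \le N^{C\epsilon}$ with very high probability.

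Finally, to pass from the averaged, high-probability bound on $\Lambda$ (and on $\braket{GAGA}$ at fixed $z,z'$) to the uniform-in-$i,j$ statement~\eqref{eq:eth1}: the two-resolvent local law is proved with very high probability for each fixed pair $(z,z')$, hence simultaneously on a polynomially fine grid of spectral parameters in $[-2,2]\times[-2,2]$ (with $\eta,\eta'$ chosen at the scale $N^{\epsilon-1}$), using a union bound over $N^{O(1)}$ grid points; the resolvent is Lipschitz in $z$ with constant $1/\eta^2 = N^{2-2\epsilon}$, so a standard $\epsilon$-net continuity argument extends the bound to all $z,z'$ in the strip at negligible cost. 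Combined with the reduction $\Lambda^2 \lesssim N^{C\epsilon}\sup(\rho\rho')^{-1}\abs{\braket{\Im G A\Im G A}}$ and with the a priori complete delocalization bound (to control the contribution of eigenvalues outside $[-2-N^{-2/3+\epsilon},2+N^{-2/3+\epsilon}]$ and to turn averaged control into pointwise control after noting that $J = N^\epsilon$ is much smaller than any polynomial rate we are claiming), we get $\max_{i,j}\abs{\braket{\bu_i,A\bu_j}-\braket A\delta_{ij}}\lesssim N^{C\epsilon}/\sqrt N$ with very high probability, and relabeling $C\epsilon$ as $\xi$ finishes the proof. The main obstacle is unquestionably the Feynman-diagram bookkeeping in the cumulant expansion underlying Theorem~\ref{chain G underline theorem}: one must track, simultaneously, the powers of $N$, the Ward factors $\rho/\eta$, and the newly relevant $\Lambda$-factors, and show that every loss of a traceless-$A$ cancellation (as in the Schwarz step~\eqref{Schw}) is exactly compensated by a gain from a higher-order cumulant, since we are after the optimal exponent and cannot afford to be lossy in the terms built only from second cumulants.
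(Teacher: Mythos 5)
Your proposal follows essentially the same route as the paper: reduce by linearity to traceless Hermitian $A$, identify the $J$-averaged overlap quantity $\Lambda$ with $\sup_{E,E'}(\rho\rho')^{-1}\braket{\Im G A\Im G A}$ via spectral decomposition and rigidity, close a self-improving inequality of the form $\Lambda^2\prec 1+\Lambda^2/J^{1/2}$ using the two-resolvent local laws (whose error terms are controlled by the underline bounds of Theorem~\ref{chain G underline theorem}), extend to all spectral parameters by a grid/Lipschitz argument, and finally trade the $J=N^\epsilon$ averaging for the pointwise bound at the cost of a factor $J^2/N$. This is exactly how the paper deduces Theorem~\ref{theorem overlap} from Theorem~\ref{theo:addchan} and Propositions~\ref{pro:lambll1}--\ref{pro:lambll}.
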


The first relation in Theorem~\ref{theorem overlap} states that any deterministic observable is essentially diagonal in the eigenbasis of \(W\), in other words the eigenvectors remain \emph{asymptotically orthogonal}  when tested against any traceless 
observable \(\braket{A}=0\). The second relation shows the same phenomenon between the eigenbasis \(\set{{\bm u}_i}_{i\in [N]}\) of \(W\) and the eigenbasis \(\set{\overline{{\bm u}_i}}_{i\in [N]}\) of \(W^t\). The scalar products \(\braket{\bm u_i,\overline{\bm u_j}}\) appearing 
in~\eqref{eq:eth1}
when \(\braket{A}\ne 0\) can also be identified. Indeed, the next theorem shows that these two eigenbases are also essentially orthogonal apart from the extreme cases \(\sigma=\pm 1\) (see Remark~\ref{rem:extrcas} below).

\begin{theorem}\label{theo:transpoverlap}
  Let \(W\) be a Wigner matrix satisfying Assumption~\ref{ass:entr}, and denote by \(\bm u_1,\dots,\bm u_N\) its 
orthonormal eigenvectors. Recall \(\sigma:=\E \chi_{\mathrm{od}}^2\) and assume \(|\sigma|<1\), then there is
a constant $C_\sigma<\infty$ such that
  \begin{equation}
    \max_{i,j\in [N]}\abs{\braket{\bm u_i, \overline{\bm u_j}}} \le C_\sigma \frac{N^\xi}{\sqrt{N}},
  \end{equation}
  with very high probability for any arbitrary small \(\xi>0\).
\end{theorem}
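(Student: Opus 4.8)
The plan is to reduce Theorem~\ref{theo:transpoverlap} to a local law for the mixed resolvent functional \(\braket{G(z_1)G^t(z_2)}\), exactly paralleling the way Theorem~\ref{theorem overlap} is reduced to a bound on \(\braket{GAGA}\). By spectral decomposition, writing \(G=\sum_k \bu_k \bu_k^*/(\lambda_k-z)\) and \(G^t = \overline{G} = \sum_l \overline{\bu_l}\,\overline{\bu_l}^{\,*}/(\lambda_l-\overline z)\), one has
\begin{equation}\label{eq:transp-specdec}
  \braket{\Im G(E+\ii\eta)\,\Im G^t(E'+\ii\eta)} = \frac{1}{N}\sum_{k,l}\frac{\eta^2\,\abs{\braket{\bu_k,\overline{\bu_l}}}^2}{\bigl((\lambda_k-E)^2+\eta^2\bigr)\bigl((\lambda_l-E')^2+\eta^2\bigr)},
\end{equation}
so that, after choosing \(\eta\) slightly above the local spacing and localizing the spectral parameters near the eigenvalues \(\lambda_{i}\approx E\), \(\lambda_{j}\approx E'\), a bound of the form \(\abs{\braket{G(z_1)G^t(z_2)}}\lesssim 1\) (gaining a full power of \(\eta\) over the trivial \(\braket{GG^*}\sim 1/\eta\)) translates, via the usual fluctuation-averaging trick over windows of size \(J=N^\xi\) in the indices \(i,j\), into \(N\abs{\braket{\bu_i,\overline{\bu_j}}}^2\lesssim N^{2\xi}\) with very high probability, uniformly in \(i,j\). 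The constant \(C_\sigma\) enters because the Dyson equation for \(\braket{GG^t}\) involves the stability factor \((1-\sigma\, m(z_1)m(z_2))^{-1}\), which degenerates precisely as \(\abs{\sigma}\to1\).

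The core of the argument is therefore the two-resolvent local law for \(\braket{GG^t}\), which I would prove by the same machinery developed in Section~\ref{sec que} and Section~\ref{sec cum exp}. First I would derive the self-consistent (Dyson) equation for \(\braket{G_1 G^t_2}\): applying \(W = G_1^{-1}+z_1\) and using \(\E[W_{ab}(\cdots)]\) via cumulant expansion, the second-moment term produces a linear operator acting on \(G_1G^t_2\) whose structure differs from the \(GAG\) case because the transpose interchanges row and column indices; concretely, the relevant self-energy operator is \(X\mapsto \sigma \braket{X} \cdot(\text{something})\) rather than \(X\mapsto\braket{X}\), and this is exactly where the factor \(\sigma=\E\chi_{\mathrm{od}}^2\) (assumed real) appears. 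Solving gives the leading term \(\braket{G_1G^t_2}\approx \frac{\sigma\, m_1 m_2}{1-\sigma\, m_1 m_2}\cdot\frac{1}{N}+ (\text{lower order})\) — or more precisely a quantity of size \(\landauO{1}\) after multiplication by \(N\) — controlled by the stability constant \(C_\sigma:=\sup_{z_1,z_2}\abs{1-\sigma\, m(z_1)m(z_2)}^{-1}<\infty\), finite since \(\abs{m}<1\) in the relevant regime and \(\abs\sigma<1\). The fluctuation term is the renormalized \(\braket{\underline{W G_1 G^t_2}}\), whose high moments I would bound by the Feynman-diagram analysis of Theorem~\ref{chain G underline theorem}, now tracking the \enquote{asymptotic orthogonality} gains coming from the transpose operation in place of the traceless-\(A\) cancellations.

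I expect the main obstacle to be the bookkeeping in the cumulant expansion: one must check that the transpose structure genuinely supplies the same \emph{maximal} cancellation that tracelessness of \(A\) supplies in the proof of Theorem~\ref{theorem overlap}, i.e.\ that every \(G^t\)-factor \enquote{pays its way} in the sense that no diagram loses the \(\eta\)-gain through a Schwarz-and-Ward step analogous to~\eqref{Schw}. Because \((G^t)(G^t)^* = \overline{GG^*}=\overline{\Im G}/\eta\) is still a valid Ward identity but \(G_1 G^t_2\) has no intermediate deterministic matrix, the danger is subtler than in the \(GAG\) case and the combinatorics must be redone to confirm that the gain of a full power of \(\eta\) survives; a secondary technical point is verifying the bound is uniform in the spectral parameters up to the edge, where \(\rho\to0\), so that localizing near the edge eigenvalues does not cost extra powers of \(N\). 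Once the local law \(\abs{\braket{G_1G^t_2}}\le C_\sigma\,(1+\landauo{1})\) is in hand with very high probability, the passage to~\eqref{eq:transp-specdec} and then to the stated maximum over \(i,j\) is a routine dyadic-pigeonhole argument over \(O(N^{1-\xi})\) windows combined with a union bound, exactly as for Theorem~\ref{theorem overlap}.
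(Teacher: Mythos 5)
Your proposal follows essentially the same route as the paper: relate $\abs{\braket{\bu_i,\overline{\bu_j}}}^2$ to $\braket{\Im G_1\Im G_2^t}$ by spectral decomposition and a window-averaging over indices of size $J=N^\xi$, prove a two-resolvent local law for $\braket{G_1G_2^t}$ with stability factor $(1-\sigma m_1m_2)^{-1}$ via the self-consistent equation, and control the renormalized term $\braket{\un{WG_1G_2^t}}$ by the diagrammatic analysis of Theorem~\ref{chain G underline theorem}, tracking the transpose ($t$-vertex) orthogonality gain — this is exactly the paper's chain Lemma~\ref{lem:compa} $\to$ Proposition~\ref{pro:lambll} $\to$ Theorem~\ref{theo:addchan}. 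One small correction: the deterministic leading term of $\braket{G_1G_2^t}$ is $m_1m_2/(1-\sigma m_1m_2)$, which is of size $\landauO{1}$, not $\landauO{1/N}$ as you wrote; this does not affect your argument since all that is needed is boundedness.
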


\begin{remark}\label{rem:extrcas}
  Theorem~\ref{theo:transpoverlap} does not hold for \(\sigma=\pm 1\). Indeed, for \(\sigma=1\) the matrix \(W\) is real symmetric hence the eigenvectors can be chosen real so \(\abs{\braket{\bm u_i, \overline{\bm u_j}}}=\delta_{ij}\). On the other hand for \(\sigma=-1\) and \(w_2=0\) the spectrum is symmetric, i.e.\ the  eigenvalues $\lambda_1\le\lambda_2\le\dots$ and the corresponding eigenvectors ${\bm u}_1,{\bm u}_2,\dots$ come in pairs, \(\lambda_{N-i+1} = -\lambda_i\) and \(\bu_{N-i+1}=\ov{\bu_{i}}\) (up to phase) and thus \(\abs{\braket{\bm u_i, \overline{\bm u_j}}}=\delta_{i,N-j+1}\).
\end{remark}

The main inputs to prove Theorems~\ref{theorem overlap}--\ref{theo:transpoverlap} are  the 
local laws for one and two resolvents (and their transposes) tested against matrices \(A\) with \(\braket{A}=0\). 
We recall that in the limit \(N\to \infty\) the resolvent \(G=G(z)=(W-z)^{-1}\) becomes approximately deterministic. Its deterministic approximation is given by \(m=m_{\mathrm{sc}}\), the Stieltjes transform of the Wigner semicircular law, which is given by the unique solution of the 
quadratic equation
\begin{equation}\label{eq:mdesc}
  -\frac{1}{m}=z+m, \qquad \Im m(z) \Im z>0.
\end{equation}
We note that \(|m|\le 1\) for any \(z\).
In this paper we allow spectral parameters with \(\Im z<0\), in order to conveniently account for possible adjoints of the resolvent since \(G(z)^*=G(\overline{z})\). Therefore, in contrast with most papers on local laws, \(\Im m_{\mathrm{sc}}\) may be negative and we define \(\rho=\rho_{\mathrm{sc}}(z):=\pi^{-1}|\Im m_{\mathrm{sc}}|\) and $\eta:= |\Im z|$.  

The classical local law (see e.g.\ in~\cite{MR2871147,MR3103909,MR3183577})
 for a single resolvent in \emph{averaged} and \emph{isotropic form} states that 
in the spectral regime $\set{z \given N\rho\eta\ge 1}$ we have
 \begin{equation}\label{eq:oldlocal}
    |\braket{(G-m)A}|\prec \frac{1}{N\eta}, \qquad   \abs*{\braket{{\bm x}, (G-m){\bm y}}}\prec \sqrt{\frac{\rho}{N\eta}}
  \end{equation}
for any deterministic matrix $A$ and vectors \({\bm x}, {\bm y}\), with \(\|A\|, \norm{\bm x}, \norm{\bm y}\lesssim 1\).
Here $\prec$ indicates the commonly used concept of stochastic domination (see, e.g.~\cite{MR3068390}) indicating
a bound with very high probability up to a factor $N^\epsilon$ for any small $\epsilon>0$, uniformly in $A, {\bm x}, {\bm y}$ 
and in the spectral parameter $z$ 
as long as $N\rho\eta\ge 1$.
The precise
definition is as follows:
\begin{definition}[Stochastic Domination]\label{def:stochDom}
  If \[
  X=\tuple*{ X^{(N)}(u) \given N\in\N, u\in U^{(N)} }\quad\text{and}\quad Y=\tuple*{ Y^{(N)}(u) \given N\in\N, u\in U^{(N)} }\] are families of non-negative random variables indexed by \(N\), and possibly some parameter \(u\), then we say that \(X\) is stochastically dominated by \(Y\), if for all \(\epsilon, D>0\) we have \[\sup_{u\in U^{(N)}} \Prob\left[X^{(N)}(u)>N^\epsilon  Y^{(N)}(u)\right]\leq N^{-D}\] for large enough \(N\geq N_0(\epsilon,D)\). In this case we use the notation \(X\prec Y\) or $X= \landauOprec*{Y}$.
\end{definition}

Our key new insight is that
whenever the deterministic matrix $A$ in~\eqref{eq:oldlocal} is traceless, then 
 $\braket{GA}$ is considerably 
smaller (by a factor of $\sqrt{\rho\eta}$, in the interesting small $\eta$ regime) than 
the general bound~\eqref{eq:oldlocal} predicts. There is no such improvement for the isotropic law. 

\begin{theorem}[Traceless single \(G\) local law]\label{pro:1g} 
Fix $\epsilon>0$,  let \(W\) be a Wigner matrix satisfying Assumption~\ref{ass:entr}, let \(z\in \C \setminus \R \), and let \(G(z)=(W-z)^{-1}\). We use the notation \(\eta:=|\Im z|\), \(\rho=\rho_{\mathrm{sc}}(z)\), \(m=m_{\mathrm{sc}}(z)\). Then for $N\eta\rho\ge N^{\epsilon}$ and  for any deterministic matrix \(A\), with \(\braket{A}=0\) and \(\norm{A}\lesssim 1\), we have 
  \begin{equation}\label{eq:avll1G}
  |\braket{(G-m)A}| =  |\braket{GA}|\prec \frac{\sqrt{\rho}}{N\sqrt{\eta}}.
  \end{equation}
\end{theorem}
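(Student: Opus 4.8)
The plan is to derive a self-consistent equation for $\braket{GA}$ via a cumulant expansion, treating the traceless structure of $A$ as a source of extra smallness. First I would write the defining identity $1 + z G = WG$ and, projecting against $A$, obtain
\begin{equation*}
  \braket{GA} = m\braket{GA} + m\braket{\underline{WG}A} + \text{(lower order)},
\end{equation*}
where $\underline{WG} := WG - \E[WG\,|\,\text{off-diagonal}]$ is the renormalized (``underlined'') quantity defined in~\eqref{eq:defunder} whose expectation in the relevant sense vanishes, so that the leading deterministic term is exactly $m$ and, since $\braket{A}=0$, there is no $\braket{A}\braket{G}$ contribution. Rearranging gives $(1-m^2)\braket{GA} \approx m\braket{\underline{WG}A} + \dots$, and since $|1-m^2|\sim 1$ away from the edge (and one controls the edge regime by the constraint $N\eta\rho\ge N^\epsilon$ together with $|1-m^2|\gtrsim\sqrt{\eta/\rho}$ or $\sqrt{\rho\eta}$ as appropriate), the whole problem reduces to a high-moment bound on the fluctuation term $\braket{\underline{WG}A}$.

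The core of the argument is therefore to estimate $\E\abs{\braket{\underline{WG}A}}^{2p}$ by a cumulant expansion in the entries of $W$, showing it is $\prec (\sqrt{\rho}/(N\sqrt{\eta}))^{2p}$. I would expand each factor $\braket{\underline{WG}A}$, producing sums over derivatives $\partial_{ab}$ hitting resolvents, generating chains of the form $\braket{G\cdots A \cdots G \cdots}$ with intermediate index summations. The bookkeeping must simultaneously track: (i) the number of free $N$-summations and the $N^{-k/2}$ factors from the $k$-th cumulants; (ii) the number of $\rho/\eta$ gains from Ward identities $GG^* = \Im G/\eta$ applied to adjacent same-type resolvents; and (iii) crucially, the number of surviving \emph{traceless} $A$-insertions, each of which — when it sits inside a trace flanked by $G$'s that are then closed up by a Ward identity — yields an extra $\sqrt{\rho\eta}$ over the naive power counting, because $\braket{(\Im G)A}$ is smaller than $\braket{(\Im G)}$ by exactly this factor (this is the single-$G$ manifestation of the zero-trace cancellation, and can itself be bootstrapped). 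The combinatorial claim is that in every graph the product of these three types of gains, weighed against the $1/\eta$ losses from off-diagonal resolvent chains that cannot be Warded, never exceeds the target $(\sqrt{\rho}/(N\sqrt\eta))^{2p}$, with the second-order-cumulant graphs being the tight ones where one must extract the $A$-gain from \emph{every} $A$-factor.

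The main obstacle is precisely this last point: in the self-energy-type terms built from second cumulants only, a trivial Schwarz-plus-Ward bound of the type~\eqref{Schw} would replace $A$ by $AA^*$ and destroy the traceless gain, so one needs a more careful expansion that keeps $A$ traceless long enough to realize the $\sqrt{\rho\eta}$ improvement — concretely, expanding one more step and using that $\braket{G}-m$ is itself small rather than bounding $\braket{G}$ by $O(1)$. A secondary technical point is the usual self-improving (bootstrap) structure: one assumes an a priori bound $\braket{GA}\prec \Psi$ for some $\Psi$ slightly worse than the claim, feeds it into the expansion, and closes the gap by iteration, using the deterministic single-$G$ local law~\eqref{eq:oldlocal} as the starting input. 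I expect the edge regime $\rho\to 0$ to require the most care in matching powers of $\rho$, but the constraint $N\eta\rho\ge N^\epsilon$ provides exactly the room needed. Note that although the theorem only concerns one $G$, the cumulant expansion inevitably generates longer chains $\braket{G A G A \cdots}$, so strictly one invokes the general chain estimate of Theorem~\ref{chain G underline theorem} (proven in Section~\ref{sec cum exp}) rather than closing everything by hand here.
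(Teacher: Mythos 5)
Your high-level skeleton --- write a self-consistent equation for $\braket{GA}$, isolate the renormalized term $\braket{\un{WG}A}$, and estimate its high moments by a cumulant expansion that tracks the traceless-$A$ gains alongside the $N$- and Ward-gains --- is indeed the paper's route to Proposition~\ref{pro:lambll1}. One correction on the deterministic part: since $\braket{A}=0$ the leading term $m\braket{A}$ vanishes and the equation reads $[1+\landauOprec{(N\eta)^{-1}}]\braket{GA}=-m\braket{\un{WG}A}+\dots$; there is no $(1-m^2)$ stability factor for this quantity, so the edge-instability issue you flag simply does not arise here (it would for $\braket{G}$ or for two-resolvent chains, not for the traceless single-$G$ average).

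The genuine gap is in how you close the argument. The cumulant expansion of $\E\abs{\braket{\un{WG}A}}^{2p}$ unavoidably produces multi-resolvent chains such as $\braket{GAG^\ast A}$, and the needed optimal bound $\braket{GAG^\ast A}\sim 1$ (rather than $1/\eta$) is \emph{not} a consequence of any single-$G$ information; hence a bootstrap on $\braket{GA}\prec\Psi$ alone cannot close. Your suggested remedy for the Schwarz-plus-Ward loss (expand once more and use that $\braket{G}-m$ is small) does not address the real obstruction, namely that the Ward identity cannot be applied across the deterministic $A$. The paper's missing ingredient is the locally averaged eigenvector-overlap quantity $\Lambda_J^A$ of~\eqref{eq:deflam}: all chains arising in the expansion are bounded by spectral decomposition in terms of $\Lambda_+^A$ (Lemma~\ref{degree two lemma}), so Theorem~\ref{chain G underline theorem} and Proposition~\ref{pro:lambll1} only yield $\abs{\braket{GA}}\prec\sqrt{\rho}\,\Lambda_+^A/(N\sqrt{\eta})$ with the overlap factor still present. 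That factor is then removed by a separate self-improving (Gronwall-type) inequality $(\Lambda_J^A)^2\prec 1+(\Lambda_+^A)^2J^{-1/2}$, which is derived not from the single-$G$ law but from the \emph{two-resolvent} local law for $\braket{\Im G_1A\Im G_2A}$ via Lemma~\ref{lem:compa} and~\eqref{eq:llimGAimGAtranspose} (Theorem~\ref{theo:addchan}). So the bootstrap must run at the level of $\Lambda$, i.e.\ at the level of two-$G$ quantities; this extra layer is the key idea of the paper and is absent from your plan.
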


We prove a similar drastic improvement owing to the traceless observables for 
local laws involving two resolvents, like $\braket{GAGA}$,  as well as local laws involving a resolvent and its
transpose, $\braket{GG^t}$. The isotropic laws are also improved in this case.
The precise statements will be given  in Remark~\ref{rmk:2G} in Section~\ref{sec que}.
We close the current section by a remark indicating the optimality of the new local law~\eqref{eq:avll1G}.

\begin{remark}
  The local law for \(\braket{GA}\) in~\eqref{eq:avll1G} is optimal for \(G,G^\ast\), as well as \(\Im G\). Indeed a simple calculation from~\cite[Theorem~\ref{funcCLT-CLT theorem}]{2012.13218} shows, for \(\Im z>0\), that 
  \begin{equation}\label{eq:optimality}
    \E \abs{\braket{\Im G A}}^2 \approx \frac{\braket{AA^\ast}}{2N^2}\Bigl(\frac{\Im m}{\eta}-\Re \partial_z m\Bigr) 
    \sim \frac{\Im m}{N^2\eta}.
  \end{equation}
  In fact, in our companion paper we prove that
   \(\braket{\Im GA}\) is asymptotically Gaussian with zero expectation and variance given in~\eqref{eq:optimality} (see~\cite[Eq.~\eqref{funcCLT-eq:variancesres}]{2012.13218}). This variance is much smaller 
   than the one without traceless observable,  $\Var \braket{\Im G}\sim (N\eta)^{-2}$ (see~\cite{MR3678478}).
\end{remark}

\section{Quantum unique ergodicity: Proof of Theorems~\ref{theorem overlap}--\ref{theo:transpoverlap}}\label{sec que}
For integers \(J\in\N\) and self-adjoint matrices \(B=B^\ast\) we introduce the \(J\)-averaged observables
\begin{align}\label{eq:deflam}
  \Xi_J^B&:= \biggl(\max_{i_0,j_0}\frac{N}{(2J)^2}\sum_{\abs{i-i_0}<J}\sum_{\abs{j-j_0}<J} \abs{\braket{\bm u_i, B \bm u_j}}^2 \biggr)^{1/2},\\\label{eq:defpi}
  \bar\Xi_J^B&:= \biggl(\max_{i_0,j_0}\frac{N}{(2J)^2}\sum_{\abs{i-i_0}<J}\sum_{\abs{j-j_0}<J} \abs{\braket{\bm u_i, B \ov{\bm u_j}}}^2 \biggr)^{1/2}.
\end{align}

The following theorem shows that both averaged overlaps \(\Xi_J^B,\bar\Xi_J^B\)are essentially bounded 
if $B$ is traceless, and that \(\bar\Xi_J^I\) is essentially bounded for \(\abs{\sigma}<1\) and for choosing the identity matrix \(B=I\).

\begin{theorem}\label{theo:addchan}
Fix $\epsilon>0$, let $J\ge N^\epsilon$, and let $A=A^*$ be a deterministic Hermitian matrix\footnote{We use the notational convention that the letter $A$ denotes traceless matrices, while $B$ denotes arbitrary 
matrices.}
 such that $\braket{A}=0$, $\norm{A}\lesssim 1$, then it holds
\begin{equation}
\label{eq:boundlamnewtheo}
\Lambda_J^A\prec 1 \qquad \text{for}\qquad \Lambda_J^A:=\Xi_J^A+\ov\Xi_J^A.
\end{equation}
Similarly, for any $|\sigma|<1$ it holds
\begin{equation}
\label{eq:boundpinewtheo}
\Pi_J\prec 1  \qquad\text{for}\qquad \Pi_J:=\bar \Xi_J^I,
\end{equation}
    where the error is uniform in \(|\sigma|\le 1-\epsilon'\), for any fixed \(\epsilon'>0\).
\end{theorem}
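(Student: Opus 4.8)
The plan is to reduce Theorem~\ref{theo:addchan} to \emph{local laws for products of two resolvents} (those announced in Remark~\ref{rmk:2G}) and then to prove those by a self-consistent analysis whose master control quantity is $\Lambda_J^A$ itself, fed by the cumulant-expansion bounds of Section~\ref{sec cum exp}. For the reduction, write $\Im G(E+\ii\eta)=\sum_k \frac{\eta}{(\lambda_k-E)^2+\eta^2}\lvert\bu_k\rangle\langle\bu_k\rvert$, so that
\begin{equation*}
  \braket{\Im G(E+\ii\eta)\,A\,\Im G(E'+\ii\eta')\,A}=\frac1N\sum_{i,j}\frac{\eta}{(\lambda_i-E)^2+\eta^2}\,\frac{\eta'}{(\lambda_j-E')^2+\eta'^2}\,\abs{\braket{\bu_i,A\bu_j}}^2 .
\end{equation*}
Choosing $\eta,\eta'$ slightly above the local eigenvalue spacing at $E,E'$ (so that $J\sim N\rho\eta\ge N^\epsilon$, the regime of Theorem~\ref{pro:1g}) and $E=\lambda_{i_0}$, $E'=\lambda_{j_0}$, the Lorentzian factors are of order $1/\eta$ and $1/\eta'$ on the index windows $\abs{i-i_0}<J$, $\abs{j-j_0}<J$; hence, exactly as in~\eqref{LamIm}, $(\Xi_J^A)^2\lesssim\sup_{E,E'}(\rho\rho')^{-1}\braket{\Im G(E+\ii\eta)A\,\Im G(E'+\ii\eta')A}$, where passing from the random $\lambda_{i_0},\lambda_{j_0}$ to a deterministic fine grid of energies costs only a union bound together with eigenvalue rigidity. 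The same manoeuvre reduces $\bar\Xi_J^A$ to $\sup(\rho\rho')^{-1}\abs{\braket{\Im G_1 A(\Im G_2)^t A}}$ and $\Pi_J=\bar\Xi_J^I$ to $\sup(\rho\rho')^{-1}\abs{\braket{\Im G_1(\Im G_2)^t}}$. So it suffices to prove $\abs{\braket{\Im G_1 A\,\Im G_2 A}}\prec\rho_1\rho_2$, $\abs{\braket{\Im G_1 A\,(\Im G_2)^t A}}\prec\rho_1\rho_2$ and $\abs{\braket{\Im G_1(\Im G_2)^t}}\prec C_\sigma\rho_1\rho_2$, uniformly over the relevant spectral parameters.

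Next I would identify the deterministic approximation and locate where $\braket{A}=0$ enters. The matrix $G_1 A G_2$ is approximated by a deterministic $M_{12}$ whose normalised trace solves a scalar equation of the form $(1-m_1 m_2)\braket{M_{12}}=m_1 m_2\braket{A}$ (with $1-\sigma m_1 m_2$ in place of $1-m_1 m_2$ for the transpose chains, and $\bar m$'s when a resolvent is replaced by its adjoint). The crucial gain is that $\braket{A}=0$ forces $\braket{M_{12}}=0$, so $M_{12}=m_1 m_2 A$ and $\braket{M_{12}A}=m_1 m_2\braket{A^2}=\landauO1$; the near-singular prefactor $(1-m_1\bar m_2)^{-1}\sim\rho/\eta$ --- which for non-traceless observables makes $\braket{GAG^*A}\sim\rho/\eta$ (e.g.\ $\braket{GG^*}$) --- never appears in the leading term, and correspondingly $\braket{\Im G_1 A\,\Im G_2 A}$ has deterministic approximation $\Im m_1\,\Im m_2\,\braket{A^2}\sim\rho_1\rho_2$ with no $1/\eta$ blow-up. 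Since $\braket{A^t}=\braket{A}=0$ as well, the same cancellation works for the transpose chains $\braket{G_1 A G_2^t A}$ for \emph{all} $\sigma$. Only for $\Pi_J$, where $A=I$ is no longer traceless, does $(1-\sigma m_1 m_2)^{-1}$ genuinely enter, and boundedness of this inverse requires $\abs\sigma<1$ (since $\abs{m_1 m_2}\le1$) --- this is the origin of $C_\sigma$ and of the breakdown at $\sigma=\pm1$ in Remark~\ref{rem:extrcas}.

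The core of the proof is controlling the fluctuation $\braket{(G_1 A G_2-M_{12})A}$ and its analogues. Following the route sketched in the introduction, I take (the supremum over the relevant spectral regime of) the two-$G$ functionals above --- equivalently $\Lambda_J^A$ --- as the basic control quantity and derive a \emph{stochastic Gronwall / self-improving estimate} for it. Peeling the renormalised (``underlined'') term $\braket{\underline{W G_1 A G_2 A}}$ off $\braket{W G_1 A G_2 A}$ and collecting the remaining Dyson-type terms reduces the task to bounding high moments of the renormalised monomial. Expanding in cumulants produces longer and longer monomials in $G$'s and $A$'s; each is brought under control either by the single-$G$ laws (Theorem~\ref{pro:1g} and the classical~\eqref{eq:oldlocal}) or --- for the long $G$-chains --- by invoking the spectral decomposition once more to dominate $\abs{\braket{G_1 A G_2 A\cdots A}}$ by powers of $\eta^{-1}$ times $\Lambda_J^2$ (after a harmless local average in the indices), the renormalised pieces being supplied by Theorem~\ref{chain G underline theorem}. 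Tracking $N$-factors, $\rho/\eta$-factors and $A$-factors through the expansion yields a self-improving inequality of the schematic form $(\Lambda_J^A)^2\prec 1+N^{-\delta}(\Lambda_J^A)^{C}$, whence $\Lambda_J^A\prec 1$ by the usual continuity (bootstrap) argument decreasing $\eta$ from $\eta\sim1$, where the bound is trivial; $\Pi_J$ is handled in parallel using the $\abs\sigma<1$ invertibility.

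The main obstacle is precisely this bookkeeping in the last step. Because we are after the \emph{optimal} $\landauO1$ bound, we cannot afford the crude Schwarz step~\eqref{Schw} that trades the traceless $A$ for the full-trace $AA^*$: in every Feynman graph generated by the cumulant expansion, each effective $A$-factor whose cancellation is lost (costing roughly $\sqrt{\rho/\eta}$ per lost factor) must be \emph{exactly} offset by the extra smallness of the higher-order cumulant that caused the loss, and in the purely second-order contributions one has to extract the tracelessness gain from \emph{all} of the $A$'s simultaneously. Making this balance quantitative --- the refined Feynman-graph analysis behind Theorem~\ref{chain G underline theorem} in Section~\ref{sec cum exp} --- is the technical heart; the remainder of the proof of Theorem~\ref{theo:addchan} is the assembly of these inputs into the Gronwall scheme above.
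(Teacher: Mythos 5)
Your proposal follows essentially the same route as the paper: the reduction of $\Xi_J^A,\bar\Xi_J^A,\Pi_J$ to $\sup_{E_1,E_2}(\rho_1\rho_2)^{-1}\braket{\Im G_1 A\Im G_2^{(t)}A}$ (resp.\ $\braket{\Im G_1\Im G_2^t}$) via spectral decomposition and rigidity is the paper's Lemma~\ref{lem:compa}; the two-resolvent local laws with errors proportional to $\Lambda_+^A$ (resp.\ $\Pi_+$) are Proposition~\ref{pro:lambll}, fed by the underline bounds of Theorem~\ref{chain G underline theorem}; and the closing step is exactly the self-improving inequality $(\Lambda_J^A)^2\prec 1+(\Lambda_+^A)^2 J^{-1/2}$ of~\eqref{eq:boot}. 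You also correctly locate where $\braket{A}=0$ kills the $(1-m_1\bar m_2)^{-1}$ instability and where $|\sigma|<1$ is needed for the $(1-\sigma m_1m_2)^{-1}$ factor, so the sketch matches the paper's argument in all essential respects.
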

Hence, up to a $J$-averaging, 
we have the \emph{asymptotic orthogonality} of \(\bm u_i\) and \(A\bm u_j\), \(A\ov{\bm u_j}\) 
for any \(i,j\) and for any  traceless $A$.
  Similarly, for \(\abs{\sigma}<1\) we have the \emph{asymptotic orthogonality} of \(\bm u_i\) and \(\ov{\bm u_j}\).
Note that in case \(\sigma=\pm1\) the bound~\eqref{eq:boundpinewtheo}
 does not hold since then \(\abs{\Pi_J}\gtrsim \sqrt{N/J}\), as easily seen.
Using Theorem~\ref{theo:addchan} we immediately conclude Theorems~\ref{theorem overlap}--\ref{theo:transpoverlap}
by removing  $J$-averaging with a small $J$.

\begin{proof}[Proof of Theorems~\ref{theorem overlap}--\ref{theo:transpoverlap}]
For the proof of Theorem~\ref{theorem overlap} we may assume by linearity that \(A\) is traceless and self-adjoint.
For any \(i,j\in [N]\), by~\eqref{eq:boundlamnewtheo}, we have that
  \begin{equation}\label{eq:eb}
    |\braket{\bm u_i, A \bm u_j}|^2+|\braket{\bm u_i, A \overline{\bm u_j}}|^2\le \frac{J^2}{N} (\Lambda_J^A)^2\prec \frac{J^2}{N}.
  \end{equation}
  Since \(J=N^\epsilon\) with \(\epsilon>0\) arbitrary small, together with the definition of \(\prec\) in Definition~\ref{def:stochDom}, the bound in~\eqref{eq:eb} implies that \(  |\braket{\bm u_i, A \bm u_j}|^2+|\braket{\bm u_i, A \overline{\bm u_j}}|^2\prec N^{-1}\), concluding the proof of Theorem~\ref{theorem overlap}. The proof of Theorem~\ref{theo:transpoverlap} is completely analogous and so omitted.
\end{proof}

As a first step towards the proof of Theorem~\ref{theo:addchan},
we first show that \(\Xi_J^B,\bar\Xi_J^B\) are comparable with \(\braket{\Im G_1 B \Im G_2 B},\braket{\Im G_1 B \Im G_2^t B}\) for suitably chosen spectral parameters in 
the resolvents \(G_i=G(z_i)\).  For any 
 \(J\in\N\) and \(E\in[-2,2]\) we define \(z=z(E,J)=E+\ii\eta(E,J)\in\HC\) implicitly via the equation
  \(N\eta(E,J)\rho(z(E,J))=J\). Note that this equation has a unique solution $\eta(E,J)>0$ 
   since the function \(\eta\to \eta\Im m(E+\ii\eta)\) is strictly increasing from \(0\) to \(1\). 
   The following simple lemma will be proven at the end of this section.
\begin{lemma}\label{lem:compa}
Let \(\epsilon>0\),  fix  some $J\ge N^\epsilon$ and let \(B=B^\ast\) be an arbitrary deterministic self-adjoint matrix. For $E_1,E_2\in [-2,2]$ let $z_i=z(E_i,J)$ and  set \(G_i=G(z_i),\rho_i=\rho(z_i)\), then we have
\begin{equation}
\label{eq:proportional}
(\Xi_J^B)^2\lesssim \sup_{E_1,E_2}\frac{\braket{\Im G_1B \Im G_2 B}}{\rho_1\rho_2}\lesssim (\Xi_J^B)^2, \quad (\bar\Xi_J^B)^2\lesssim \sup_{E_1,E_2}\frac{\braket{\Im G_1B \Im G_2^t B}}{\rho_1\rho_2}\lesssim (\bar \Xi_J^B)^2,
\end{equation}
with very high probability.
\end{lemma}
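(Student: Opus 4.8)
The plan is to exploit the spectral decomposition of the resolvent and compare the smooth ``window function'' coming from $\Im G$ with the sharp indicator used in the definition of $\Xi_J^B$. Writing $W = \sum_k \lambda_k \bm u_k \bm u_k^*$, we have for $z_i = E_i + \ii\eta_i$ with $\eta_i = \eta(E_i,J)$,
\begin{equation}\label{eq:specdec}
  \braket{\Im G_1 B \Im G_2 B} = \frac{1}{N}\sum_{i,j} \frac{\eta_1}{(\lambda_i-E_1)^2+\eta_1^2}\cdot\frac{\eta_2}{(\lambda_j-E_2)^2+\eta_2^2}\cdot \abs{\braket{\bm u_i, B\bm u_j}}^2,
\end{equation}
and analogously with $\bm u_j$ replaced by $\ov{\bm u_j}$ in the transpose case (using $\braket{\bm u_i, B \Im G_2^t B}$ and noting $(\Im G_2)^t$ acts on the $\ov{\bm u_j}$ basis). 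The Poisson kernel $\eta/((\lambda-E)^2+\eta^2)$ is, up to universal constants, comparable to $\eta^{-1}$ when $\abs{\lambda-E}\lesssim \eta$ and decays quadratically outside; so the double sum in~\eqref{eq:specdec} is, up to constants, a weighted average of $\abs{\braket{\bm u_i, B\bm u_j}}^2$ over indices $i,j$ with $\lambda_i$ near $E_1$, $\lambda_j$ near $E_2$, with effective window width $\sim\eta_i N\rho_i = J$ eigenvalues (by the defining relation $N\eta_i\rho_i = J$, together with rigidity of eigenvalues which guarantees $\abs{\set{k : \abs{\lambda_k - E}\le \eta}} \sim N\eta\rho$ with very high probability).

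For the lower bound on the middle quantity by $(\Xi_J^B)^2$, I would first use the local law to replace $\rho_i$ by $\pi^{-1}\abs{\Im m(z_i)} = N^{-1}\sum_k \frac{\eta_i}{\pi((\lambda_k-E_i)^2+\eta_i^2)}$ up to negligible error, and then simply restrict the double sum in~\eqref{eq:specdec} to a dyadic block $\abs{i-i_0}<J$, $\abs{j-j_0}<J$ centered appropriately, where each Poisson factor is $\gtrsim \eta_i^{-1}/J \cdot$ (number of terms), using rigidity to control which physical energy window these index blocks correspond to; optimizing over $i_0,j_0$ recovers the max in the definition of $\Xi_J^B$. For the upper bound by $(\Xi_J^B)^2$, I would go the other way: partition $[-2,2]$ into $O(N/J)$ intervals each containing $\sim J$ eigenvalues, bound the Poisson kernel on the $m$-th shell (at distance $\sim m\eta$) by $\lesssim \eta^{-1} m^{-2}$, so that the sum over shells $\sum_m m^{-2}$ converges and is dominated (up to a constant and a choice of $E_1,E_2$ at the worst block) by a single block of size $2J$, which is $\lesssim (\Xi_J^B)^2$ after dividing by $\rho_1\rho_2 \sim (N\eta_i)^{-2}J^2$. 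Here I must also absorb the boundary/edge cases $E_i$ near $\pm 2$, where $\rho$ is small and $\eta(E,J)$ is larger; this is where one uses that $\eta\mapsto\eta\Im m(E+\ii\eta)$ increases from $0$ to $1$, so $\eta(E,J)$ is well-defined and $\rho_i\eta_i \sim J/N$ uniformly.

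The transpose statement is handled identically: $\braket{\Im G_1 B \Im G_2^t B} = \frac1N\sum_{i,j}(\text{Poisson}_1)_i (\text{Poisson}_2)_j \abs{\braket{\bm u_i, B \ov{\bm u_j}}}^2$ since $(\Im G_2)^t = \sum_j (\text{Poisson}_2)_j \ov{\bm u_j}\,\ov{\bm u_j}^*$, so the exact same averaging argument applies verbatim with $\bm u_j \rightsquigarrow \ov{\bm u_j}$.

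The main obstacle I anticipate is not any single estimate but the bookkeeping needed to pass rigorously between the ``index picture'' (blocks of $2J$ consecutive eigenvalue labels, as in $\Xi_J^B$) and the ``energy picture'' (Poisson kernels at energies $E_1,E_2$, as in $\braket{\Im G_1 B \Im G_2 B}$): one must invoke eigenvalue rigidity to guarantee that an index window $\abs{i-i_0}<J$ sits inside an energy window of width $\sim \eta_i$ (and conversely), and handle uniformly the entire spectrum including the edge where the local density varies. All of this holds with very high probability, so the conclusion is stated in that form; the actual inequalities are then elementary, just summing a convergent series of shell contributions.
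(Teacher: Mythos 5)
Your proposal is correct and follows essentially the same route as the paper: spectral decomposition of \(\braket{\Im G_1 B\Im G_2 B}\) into Poisson kernels times \(\abs{\braket{\bm u_i,B\bm u_j}}^2\), eigenvalue rigidity plus the defining relation \(N\eta\rho=J\) to identify index blocks of size \(J\) with energy windows of width \(\eta\), the lower bound by restricting to a single block with \(E_i\) at the quantiles, and the upper bound by a local-averaging/shell argument (the paper phrases this via the identity \(N^{-2}\sum_{ab}R_{ab}=\braket{\Im G_1}\braket{\Im G_2}\sim\rho_1\rho_2\) rather than summing \(\sum_m m^{-2}\), but this is only a cosmetic difference). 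The transpose case via \((\Im G_2)^t=\sum_j(\mathrm{Poisson}_2)_j\,\ov{\bm u_j}\,\ov{\bm u_j}^{\,*}\) is also handled exactly as in the paper.
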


As the main  inputs for Theorem~\ref{theo:addchan},
we now state the various  local laws and bounds for products of \(G\)'s, their transposes
 and deterministic matrices in the following Propositions~\ref{pro:lambll1}--\ref{pro:lambll}. 
  These bounds
 still involve  the key control quantities $\Lambda$ and $\Pi$. 
  Using these bounds, we will  prove Theorem~\ref{theo:addchan} by a Gronwall argument on $\Lambda$ and $\Pi$
  that will immediately imply Theorem~\ref{pro:1g}.
  Finally, for completeness, we also  state a few representative local laws involving two resolvents in Remark~\ref{rmk:2G}.
  The key technical 
  Propositions~\ref{pro:lambll1}--\ref{pro:lambll}  will  be 
  proven in Section~\ref{sec:loclaws}.

\begin{proposition}\label{pro:lambll1}
Let \(A=A^\ast\) be a deterministic matrix with \(\braket{A}=0\). Fix $\epsilon>0$ and consider $z\in \C\setminus \R$ such that $L:=N\eta\rho\ge N^\epsilon$. Then for \(G=G(z)\) we have that
\begin{equation}\label{eq:singGA}
    |\braket{GA}|\prec \frac{\sqrt{\rho}\Lambda_+^A}{N\sqrt{\eta}},
  \end{equation}
  with \(\Lambda_+^A:=\Lambda_L^A+\|A\|\).
\end{proposition}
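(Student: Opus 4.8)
The starting point is the standard single-resolvent self-consistent equation for $\braket{GA}$. Writing $G = m + (G-m)$ and using the identity $WG = I + zG$ together with $-1/m = z+m$, one derives the approximate equation
\[
  (1 + m^2)\braket{GA} = -m\braket{\un{WG}A} + m\braket{(G-m)}\braket{GA} + (\text{lower order}),
\]
where $\un{WG} = WG - \E[\partial\,](\text{self-energy renormalization})$ is the renormalized (underlined) product as in~\eqref{eq:defunder}. Since $\braket{A}=0$, the naive leading term $m\braket{G}\braket{A}$ drops out, which is precisely why one gains the extra smallness. The prefactor $1+m^2$ is bounded below by $\rho$ up to constants (using $-1/m=z+m$ and $|m|\le 1$), so after dividing through one obtains
\[
  |\braket{GA}| \lesssim \frac{1}{\rho}\Bigl( |\braket{\un{WG}A}| + |\braket{G-m}|\,|\braket{GA}| + \dots\Bigr).
\]
The term $|\braket{G-m}|\,|\braket{GA}|$ is absorbed into the left side using the a priori bound $|\braket{G-m}|\prec 1/(N\eta)\ll \rho$ valid in the regime $L = N\eta\rho \ge N^\epsilon$; the genuinely small remaining contributions ($\braket{GA}^2$ and similar) are handled by the same absorption once a rough a priori bound $|\braket{GA}|\prec 1/\sqrt{N\eta}$ (from~\eqref{eq:oldlocal}) is fed in.

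The heart of the matter is therefore the bound on the renormalized term $\braket{\un{WG}A}$. The plan is to estimate its high moments $\E |\braket{\un{WG}A}|^{2p}$ by a cumulant expansion in the entries of $W$. Each expansion step either produces a Ward-identity reduction (contributing a factor $\rho/\eta$ and removing one $G$) or a higher-order cumulant (contributing extra $N^{-1/2}$ powers). The key point, and what makes the proof deliver the factor $\Lambda_+^A$ rather than $\|A\|$, is that in every resulting monomial the traceless matrix $A$ still appears, and whenever we are forced to split a chain $G A \cdots G A$ by a Schwarz inequality as in~\eqref{Schw}, we do not bound $\braket{GAG^*A}$ crudely by $\eta^{-1}\braket{(\Im G) AA^*}$; instead we invoke the spectral-decomposition comparison (Lemma~\ref{lem:compa}) to recognize $\braket{\Im G_1 A\Im G_2 A}$ as $\lesssim \rho_1\rho_2(\Xi_L^A)^2 \le \rho_1\rho_2(\Lambda_L^A)^2$. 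Each such split thus costs a factor $\Lambda_L^A$, but — crucially — only one of them is needed here because $\braket{\un{WG}A}$ contains only a single $G$ and a single $A$ to begin with, so at most one $\Lambda$-factor and at most a bounded number of Ward reductions are ever extracted. Bookkeeping the $N$-powers, the $\rho/\eta$-powers, and the single $\Lambda_+^A$-power then yields $\braket{\un{WG}A}\prec \sqrt{\rho}\,\Lambda_+^A/(N\sqrt{\eta})\cdot\rho$ (the extra $\rho$ being cancelled against $1/\rho$ from the prefactor above), which is exactly~\eqref{eq:singGA}. The $\|A\|$ summand in $\Lambda_+^A$ arises from the trivially-bounded boundary terms of the expansion (lowest-order cumulant contributions with no $G$-chain to split), which are estimated directly using $\|G\|\le 1/\eta$ and $\|A\|$.

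The main obstacle is the combinatorial control of the cumulant expansion for $\braket{\un{WG}A}$: one must show that along \emph{every} branch of the expansion the loss of a cancellation-carrying $A$-factor (each Schwarz split as in~\eqref{Schw}) is compensated by a simultaneous gain — either a higher-order cumulant power of $N^{-1/2}$ or a Ward factor $\rho/\eta$ — so that the net power of $\Lambda$ never exceeds one and the target power-counting is met with the optimal exponent. In this paper that combinatorial accounting is the content of the Feynman-graph analysis carried out for the general chain estimate; here we only need its specialization to the shortest chain, so in practice one cites Theorem~\ref{chain G underline theorem} (equivalently the relevant case of Theorem~\ref{chain G underline theorem}) with the chain consisting of a single $G$ and a single $A$, and the remaining work is the elementary self-consistent-equation manipulation and absorption argument sketched above. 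A minor technical point to handle with care is the uniformity of all estimates in the spectral parameter $z$ throughout the admissible regime $L\ge N^\epsilon$, which follows from the uniformity built into $\prec$ and from continuity/grid arguments standard for local laws.
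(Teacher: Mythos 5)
Your overall strategy is the paper's: derive a self-consistent equation for \(\braket{GA}\), use \(\braket{A}=0\) to kill the leading deterministic term, and reduce everything to the bound on the renormalized term \(\braket{\un{WG}A}\), which is indeed supplied by Theorem~\ref{chain G underline theorem} (the \(l=1\), \(a=1\) case of~\eqref{eq:under} gives \(|\braket{\un{WG}A}|\prec \sqrt{\rho}\,\Lambda_+^A/(N\sqrt{\eta})\) directly). However, your stability analysis is wrong in a way that would break the power counting. First, the factor \(1+m^2\) and the renormalized term cannot both appear as you wrote them: the term \(-m\braket{G}G\) in~\eqref{eq:Gnounder} is precisely what gets absorbed into the definition of \(\un{WG}\) in~\eqref{eq:newunder}, so the correct equation is \(\bigl[1-m\braket{G-m}\bigr]\braket{GA}=-m\braket{\un{WG}A}+(1/N\text{-terms})\), with stability factor \(1+\landauOprec{1/(N\eta)}\), i.e.\ essentially \(1\); your version double-counts by keeping \(1+m^2\) on the left \emph{and} \(m\braket{G-m}\braket{GA}\) on the right \emph{and} underlining \(WG\). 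Second, and more seriously, the claim that \(|1+m^2|\gtrsim\rho\) is false: from \(m^2+zm+1=0\) one has \(1+m^2=-zm\), so \(|1+m^2|=|z||m|\), which near \(z=\ii\eta\) is of order \(\eta\) while \(\rho\sim 1\). Dividing by this factor would destroy the estimate in the bulk near \(E=0\). Relatedly, there is no ``extra \(\rho\)'' in the bound on \(\braket{\un{WG}A}\) to cancel a \(1/\rho\); the underlined term is bounded by exactly the target \(\sqrt{\rho}\,\Lambda_+^A/(N\sqrt{\eta})\) and no division by a small stability factor is needed.

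Two smaller points. The ``lower order'' terms are not of the form \(\braket{GA}^2\); they are the \(1/N\)-terms \(\frac{m\sigma}{N}\braket{G^tGA}\) and \(\frac{\widetilde{w_2}}{N}\braket{\diag(G)GA}\) coming from~\eqref{eq:newunder}, and the first of these genuinely requires a Cauchy--Schwarz step plus the two-resolvent input \(\braket{GA|G^t|AG^*}\prec\Lambda_+^2\rho/\eta\) (Lemma~\ref{degree two lemma}) to land at \(\Lambda_+\sqrt{\rho}/(N\sqrt{\eta})\); this cannot be waved away. Also, no a priori bound \(|\braket{GA}|\prec 1/\sqrt{N\eta}\) or absorption of \(\braket{GA}\)-terms is needed once the equation is set up correctly, since the only \(\braket{GA}\)-dependence on the right-hand side carries the small factor \(\braket{G-m}\).
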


\begin{proposition}\label{pro:lambll} 
  Let \(A=A^\ast,A'=(A')^\ast\) be a deterministic matrix with \(\braket{A}=0=\braket{A'}\). Fix $\epsilon>0$, let \(W\) be a Wigner matrix satisfying Assumption~\ref{ass:entr}, let \( z_1,z_2\in \C \setminus \R \), and let \(G_i=G(z_i)\), for \(i\in \set{1,2}\). We use the notations \(\eta_i:=|\Im z_i|\), \(\rho_i=\rho_{\mathrm{sc}}(z_i)\), 
  \(m_i=m_{\mathrm{sc}}(z_i)\), and set \(L:=N\min_i(\eta_i\rho_i)\), \(\eta_*:=\eta_1\wedge\eta_2\) and \(\rho^*=\rho_1\vee \rho_2\). 
  Then for $L\ge N^\epsilon$  and setting \(\Lambda_+^A=\Lambda_L^A+\|A\|\), \(\Pi_+:=\Pi_L+1\), 
   we have the averaged local laws 
  \begin{align}
    &|\braket{G_1G_2^{(t)}A}|\prec\frac{\sqrt{\rho^*}\Lambda_+^A}{N\eta_*^{3/2}},\quad |\braket{\Im G_1 A G_2^{(t)}}|\prec \frac{\rho_1\Lambda_+^A}{L\sqrt{\eta_*}}, \quad |\braket{\Im G_1 A \Im G_2^{(t)}}|\prec \frac{\rho_1\rho_2\Lambda_+^A}{L\sqrt{\eta_*}},\label{eq:boundimGGA} \\
    &\braket{G_1AG_2^{(t)}A'}=m_1m_2\braket{AA'}+\landauOprec*{\Lambda_+^A\Lambda_+^{A'}\sqrt{\frac{\rho^\ast}{N\eta_\ast}}},\label{eq:llGAGAtranspose} \\
    &\braket{\Im G_1A\Im G_2^{(t)} A'}=\Im m_1 \Im m_2\braket{AA'}+\mathcal{O}_\prec\left(\frac{\rho_1 \rho_2\Lambda_+^A\Lambda_+^{A'}}{\sqrt{L}} \right),\label{eq:llimGAimGAtranspose} 
  \end{align}
  where \(G^{(t)}\) indicates that the bounds are valid for both choices \(G\) or \(G^t\). 
   Moreover, for any deterministic vectors \({\bm x}, {\bm y}\) such that \(\lVert {\bm x}\rVert+\lVert {\bm y}\rVert\lesssim 1\) we have the isotropic law
\begin{equation}\label{eq:isll2Gpro}
  |\braket{{\bm x}, G_1AG_2 {\bm y}}|\prec \Lambda_+^A\sqrt{\frac{\rho^*}{\eta_*}}.
\end{equation}
  Additionally, for \(|\sigma|<1\)  we have that
  \begin{equation}\label{eq:g1g2t}
    \braket{G_1G_2^t}=\frac{m_1m_2}{1-\sigma m_1 m_2}+\landauOprec*{\Pi_+^2\sqrt{\frac{\rho^\ast}{N\eta_\ast}}},
  \end{equation}
  and
  \begin{equation}\label{eq:lltrantran}
    \braket{\Im G_1\Im G_2^t}=\frac{\Im m_1 \Im m_2 (1-\sigma^2|m_1|^2|m_2|^2)}{|1-\sigma m_1m_2|^2|1-\sigma \overline{m_1}m_2|^2}+\mathcal{O}_\prec\left(\frac{\rho_1 \rho_2\Pi_+^2}{\sqrt{L}} \right),
  \end{equation}
    where the error is uniform in \(|\sigma|\le 1-\epsilon'\), for any fixed \(\epsilon'>0\).
\end{proposition}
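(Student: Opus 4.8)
The plan is to derive each estimate from an approximate self-consistent (Dyson-type) equation whose only non-negligible error is a \emph{renormalized word} $\braket{\underline{W\,G_{i_1}A\cdots}}$ (or an isotropic analogue); the sharp bound for such words, with the optimal number of $\Lambda_+$ and $\Pi_+$ factors, is the crucial input and is exactly the content of Theorem~\ref{chain G underline theorem}, to be proved in Section~\ref{sec cum exp} by a Feynman-diagram analysis of the cumulant expansion. That analysis, not the manipulations of the present section, is the genuine difficulty. The bounds are obtained in a hierarchy: first the size bounds~\eqref{eq:boundimGGA} (together with the trivial companion $\abs{\braket{G_1G_2^{(t)}}}\lesssim1/\eta_\ast$ from Cauchy--Schwarz and the Ward identity), then the sharp asymptotics~\eqref{eq:llGAGAtranspose},~\eqref{eq:g1g2t} and the isotropic law~\eqref{eq:isll2Gpro}, and finally, by inserting the Ward identity $\Im G_i=\eta_iG_iG_i^\ast$, the sharper $\Im$--$\Im$ laws~\eqref{eq:llimGAimGAtranspose},~\eqref{eq:lltrantran}, which gain a power of $\rho$ over the bare $G$-bounds exactly as in the single-$G$ improvement of Theorem~\ref{pro:1g}. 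No separate continuity argument in $\eta$ is needed, since the self-improving dependence on $\Lambda_+,\Pi_+$ plays that role and is resolved afterwards in Theorem~\ref{theo:addchan}.

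To set up the self-consistent equation, take $\braket{G_1AG_2A'}$ for definiteness. Starting from $(W-z_i)G_i=I$ and $-1/m_i=z_i+m_i$, one expands $G_i$ around $m_i$, substitutes into $G_1AG_2$, and performs the cumulant (Wick/self-energy) expansion of the remaining $W$-factors, arriving at a closed linear equation of the schematic shape
\[
  (1-m_1m_2\mathcal S)\bigl[G_1AG_2\bigr]=m_1m_2A+\mathcal E_{\mathrm{fluc}}+\mathcal E_{\mathrm{small}},
\]
where $\mathcal S[R]:=\E[WR W]=\braket{R}\,I+N^{-1}\sigma R^t+N^{-1}\widetilde{w_2}\diag(R)$ is the self-energy operator of $W$, the fluctuation $\mathcal E_{\mathrm{fluc}}$ is built from $\braket{\underline{WG_1AG_2A'}}$ and shorter underlined words (hence bounded by Theorem~\ref{chain G underline theorem}), and $\mathcal E_{\mathrm{small}}$ gathers lower-order remainders controlled by the single-$G$ laws~\eqref{eq:oldlocal}, Proposition~\ref{pro:lambll1}, the Ward identity, $\norm{G_i}\le1/\eta_i$ and the already-established size bounds. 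Taking the normalized trace and using $\braket{A}=0$ gives $\braket{G_1AG_2}(1-m_1m_2)=\braket{\mathcal E_{\mathrm{fluc}}}+\braket{\mathcal E_{\mathrm{small}}}+\dots$, and since the right-hand side is small the amplification $\abs{1-m_1m_2}^{-1}\lesssim1/\eta_\ast$ costs only a factor $1/\eta_\ast$, yielding~\eqref{eq:boundimGGA}. Pairing the equation instead with the traceless $A'$ annihilates the $\braket{R}\,I$ part of $\mathcal S$, so no amplification occurs and one reads off the sharp~\eqref{eq:llGAGAtranspose} with main term $m_1m_2\braket{AA'}$. For $\braket{G_1G_2^t}$ the analogous computation involves the cross-covariance $\E[WR\overline{W}]$, in which the roles of the $\braket{R}\,I$ and $N^{-1}R^t$ components are exchanged, so the factor $\sigma$ now multiplies the $O(1)$ trace-part; taking the trace produces the scalar equation $\braket{G_1G_2^t}(1-\sigma m_1m_2)=m_1m_2+(\dots)$ and hence the denominator of~\eqref{eq:g1g2t}.

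The stability of these equations is where the traceless (resp.\ $\abs{\sigma}<1$) structure is decisive. On $N\times N$ matrices the operator $1-m_1m_2\mathcal S$ has a single nearly-unstable direction, $I$, on which its inverse has norm $\sim\abs{1-m_1m_2}^{-1}$, possibly as large as $1/\eta_\ast$; on the complementary traceless subspace it is $1+O(1/N)$ and is inverted with an $O(1)$ inverse. A traceless inhomogeneity such as $m_1m_2A$ lies entirely in this good subspace, so the amplification is never triggered and the prefactors stay $O(1)$, giving the clean main term $m_1m_2\braket{AA'}$; the identity component of $G_1AG_2$, which does feel the amplification, is exactly $\braket{G_1AG_2}$, already controlled by~\eqref{eq:boundimGGA} and anyway killed by the traceless $A'$. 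For~\eqref{eq:g1g2t} and~\eqref{eq:lltrantran} the decisive quantity is $1-\sigma m_1m_2$, and $\abs{1-\sigma m_1m_2}\ge1-\abs{\sigma}\ge\epsilon'$ uniformly for $\abs{\sigma}\le1-\epsilon'$ (using $\abs{m_i}\le1$); this is the source of the hypothesis $\abs{\sigma}<1$, of the constant $C_\sigma$, and of the uniformity claim, and the boundary cases $\sigma=\pm1$ are genuinely excluded (cf.\ Remark~\ref{rem:extrcas}). The exact error exponents---the powers of $\rho^\ast,\eta_\ast,L$ and, above all, the count of $\Lambda_+$ and $\Pi_+$ factors---come from Theorem~\ref{chain G underline theorem}: each surviving traceless factor inside an underlined word contributes one $\Lambda_+$, and traces of long $G$-$A$ words arising along the expansion are reduced by spectral decomposition to the two-resolvent word $\braket{GAGA}$, which is how $\Lambda$ re-enters.

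The remaining statements are routine: the isotropic law~\eqref{eq:isll2Gpro} follows from the same self-consistent equation with the averaged underlined words replaced by their isotropic counterparts in Theorem~\ref{chain G underline theorem}, and~\eqref{eq:llimGAimGAtranspose},~\eqref{eq:lltrantran} follow by inserting $\Im G_i=\eta_iG_iG_i^\ast$ and re-running the estimate, the Ward cancellation supplying the extra power of $\rho$. The main obstacle lies upstream, in the proof of Theorem~\ref{chain G underline theorem}---bounding all high moments of $\braket{\underline{WG_1A\cdots}}$ with the optimal $\Lambda_+,\Pi_+$ count, where each loss of a cancellation of the type~\eqref{Schw} must be exactly compensated by a gain from a higher-order cumulant. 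Within the present reduction the only delicate points are (i) checking that the internal hierarchy closes, i.e.\ that~\eqref{eq:boundimGGA} is strong enough to feed the sharp equations without circularity, and (ii) the stability bookkeeping---verifying quantitatively that the traceless (resp.\ $\abs{\sigma}<1$) structure removes the $1/\eta_\ast$ amplification so that all coefficients remain $O(1)$.
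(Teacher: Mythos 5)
Your overall strategy --- derive a self-consistent equation whose only non-trivial error is a renormalized word $\braket{\un{WG_1A\cdots}}$, feed in Theorem~\ref{chain G underline theorem}, and let the traceless (resp.\ $\abs{\sigma}<1$) structure control the stability --- is exactly the paper's route, and your discussion of~\eqref{eq:llGAGAtranspose},~\eqref{eq:g1g2t} and the role of $\abs{1-\sigma m_1m_2}\ge1-\abs{\sigma}$ is correct. However, your derivation of the first bound in~\eqref{eq:boundimGGA} contains a quantitative error. You write the equation as $\braket{G_1AG_2}(1-m_1m_2)=\braket{\mathcal E_{\mathrm{fluc}}}+\cdots$ and then divide by $1-m_1m_2$, claiming the amplification ``costs only a factor $1/\eta_\ast$.'' But the fluctuation term $\braket{\un{WG_1G_2}A}$ is, by~\eqref{eq:under}, already of size $\sqrt{\rho^\ast}\Lambda_+^A/(N\eta_\ast^{3/2})$ --- i.e.\ exactly the size of the target --- so there is no room for \emph{any} amplification: since $\abs{1-m_1m_2}$ can be as small as $\sim\eta_\ast/\rho^\ast$ (e.g.\ for $z_1=\bar z_2$ in the bulk), your step loses a factor $\rho^\ast/\eta_\ast=N(\rho^\ast)^2/L$, which can be polynomially large and which then breaks the downstream use of this bound in the proof of~\eqref{eq:llGAGAtranspose}. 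The paper avoids the stability factor altogether for this quantity: expanding only $G_1$ (as in~\eqref{eq:2G} with $B_1=I$, $B_2=A$) the coefficient of $\braket{G_1G_2A}$ on the left is $1+\landauOprec{1/(N\eta_2)}$, because the would-be dangerous self-energy term is $m_1\braket{G_1G_2}\braket{G_2A}$, a product of a large factor with the very small single-$G$ quantity $\braket{G_2A}$. The positioning of the traceless $A$ adjacent to the expanded resolvent (so that the trace closing the self-energy always lands on a traceless observable) is the point your sketch misses.

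A second, smaller gap concerns~\eqref{eq:lltrantran}: it does not follow merely by ``inserting $\Im G_i=\eta_iG_iG_i^\ast$ and re-running the estimate.'' In contrast to~\eqref{eq:llimGAimGAtranspose}, where the analogues of $\braket{\Im G_2A'}$ and $\braket{G_2^\ast A'}$ are small, here $\braket{\Im G_2}$ and $\braket{G_2^\ast}$ are $\landauO{1}$, so several leading terms survive (cf.~\eqref{eq:transpimim}). One must first establish the auxiliary laws~\eqref{eq:needlltransp} for $\braket{G_1G_2^t}$ and $\braket{G_i\Im G_j^t}$, substitute them, and then solve the resulting linear relation for $\braket{\Im G_1\Im G_2^t}$ (moving the term $\sigma\ov{m_1}\braket{\Im G_1\Im G_2^t}\braket{G_2^\ast}$ to the left); only this produces the specific deterministic leading term $\Im m_1\Im m_2(1-\sigma^2\abs{m_1}^2\abs{m_2}^2)/(\abs{1-\sigma m_1m_2}^2\abs{1-\sigma\ov{m_1}m_2}^2)$. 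Your sketch gives no mechanism for this expression to arise.
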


Using Lemma~\ref{lem:compa} and Propositions~\ref{pro:lambll1}--\ref{pro:lambll} as an input, we now conclude the proof of Theorem~\ref{theo:addchan}.

\begin{proof}[Proof of Theorem~\ref{theo:addchan}] 
We start with the proof of~\eqref{eq:boundlamnewtheo}.
Choose \(J=N^\epsilon\) with a fixed arbitrary small \(\epsilon>0\), and $E_1,E_2\in [-2,2]$. Then by the definition of \(z(E_i,J)=E_i+\ii\eta(E_i,J)\) above Lemma~\ref{lem:compa} it follows that
  \[
  J=N\eta(E_1,J)\rho(z(E_1,J))=N\eta(E_2,J)\rho(z(E_2,J)),
  \]
  and thus we obtain from~\eqref{eq:llimGAimGAtranspose} 
  that 
  \begin{equation}\label{eq zJ local law}
    \frac{\abs{\braket{\Im G(z(E_1,J))A\Im G(z(E_2,J))^{(t)} A }}}{\rho(z(E_1,J))\rho(z(E_2,J)) }\prec 1+\frac{(\Lambda_+^A)^2}{J^{1/2}}.
  \end{equation}
  By a standard grid argument using the Lipschitz continuity of the resolvent we conclude that~\eqref{eq zJ local law} remains valid after taking the supremum over \(E_1,E_2\in[-2,2]\) and therefore from the lower bound in Lemma~\ref{lem:compa} we conclude 
  \begin{equation}\label{eq:boot}
    (\Lambda_J^A)^2\prec 1+\frac{(\Lambda^A_+)^2}{J^{1/2}}. 
  \end{equation}
  Finally, by~\eqref{eq:boot} it follows that $(\Lambda_J^A)^2\prec 1$, concluding the proof of~\eqref{eq:boundlamnewtheo}. The proof of~\eqref{eq:boundpinewtheo} is completely analogous to the proof of~\eqref{eq:boundlamnewtheo} above %
  using the local law~\eqref{eq:lltrantran}.
  This concludes the proof of Theorem~\ref{theo:addchan}.
\end{proof}

\begin{proof}[Proof of Theorem~\ref{pro:1g}]
This theorem immediately  follows from Propositions~\ref{pro:lambll1} 
  together with \(\Lambda_L^A\prec 1\) obtained in Theorem~\ref{theo:addchan}.
\end{proof}

\begin{remark}\label{rmk:2G}
Proposition~\ref{pro:lambll}  combined with the 
  \(\Lambda_+^A+\bm1(|\sigma|<1)\Pi_+\prec 1\) obtained in Theorem~\ref{theo:addchan}
  also provides local laws involving two resolvents 
  as counterparts of the single $G$ local law stated in Theorem~\ref{pro:1g}.
  For example, with the notations of Proposition~\ref{pro:lambll}, we have
    \begin{align}
    \braket{G_1AG_2^{(t)}A'}&=m_1m_2\braket{AA'}+\landauOprec*{\sqrt{\frac{\rho^*}{N\eta_\ast}}},
   \qquad  |\braket{{\bm x}, G_1AG_2 {\bm y}}|\prec \sqrt{\frac{\rho^*}{\eta_*}},
   \label{eq:avll2G2}
  \end{align}
  and for \(|\sigma|< 1\) we also have 
    \begin{equation}\label{eq:avll2G3}
    \braket{G_1G_2^t}=\frac{m_1m_2}{1-\sigma m_1m_2}+\landauOprec*{\sqrt{\frac{\rho^*}{N\eta_\ast}}}.
  \end{equation}
  We stated only the local laws for two resolvents where the asymptotic orthogonality mechanism is detected, i.e.\ if a traceless deterministic matrix is present or if \(G\) and \(G^t\) appear next  to each other
  and \(|\sigma|<1\). Note that when both mechanisms are simultaneously present, as in the terms with transposes in~\eqref{eq:boundimGGA}, one may gain from both effects  simultaneously, but we refrain from doing this here. 
  
  For comparison, we also list some local laws without exploiting this mechanism:
    \begin{equation}\label{eq:GG}
    \braket{G_1G_2}=\frac{m_1m_2}{1- m_1m_2}+\mathcal{O}_\prec\left(\frac{1}{N\eta_1\eta_2}\right), \qquad
     \braket{{\bm x}, G_1G_2 {\bm y}}=\frac{m_1m_2\braket{{\bm x},  {\bm y}}}{1-m_1m_2}+\mathcal{O}_\prec\left(\frac{\sqrt{\rho^*}}{\sqrt{N\eta_*}\eta^*}\right),
  \end{equation}
  and  for any $|\sigma|\le 1$
    \begin{equation}\label{eq:GGt}
    \braket{G_1G_2^t}=\frac{m_1m_2}{1-\sigma m_1m_2}+\mathcal{O}_\prec\left(\frac{1}{N\eta_1\eta_2}\right);
  \end{equation}
 these relations  are proven in our companion paper~\cite{2012.13218} using  Theorem~\ref{chain G underline theorem}   of the present paper.
In the most interesting critical case $z:=z_1=\bar z_2$ with $\eta=|\Im z|\ll 1$,  the leading term in~\eqref{eq:GG} is of order $1/\eta$
with a large error $1/N\eta^2$, 
while the leading term in~\eqref{eq:avll2G2} is bounded (even zero in the isotropic case) with a negligble error term.
The leading terms in~\eqref{eq:avll2G3} and~\eqref{eq:GGt} are of course the same, but the error
term in~\eqref{eq:GGt} is much bigger since it
ignores the asymptotic orthogonality mechanism.

Note that using the decomposition \(B = \braket{B} + \mathring{B}\), where \(\mathring{B}\) is the traceless part of \(B\), a combination of~\eqref{eq:avll2G2}--\eqref{eq:GGt} trivially  gives   local laws for any product of the form \(GBG^{(t)}B'\) for arbitrary deterministic matrices \(B\) and \(B'\).
\end{remark}

  Finally, we close this section by proving Lemma~\ref{lem:compa}.
  
\begin{proof}[Proof of Lemma~\ref{lem:compa}]
We  only consider $\braket{\Im G_1 B \Im G_2 B}$, the proof of the bounds for $\braket{\Im G_1 B \Im G_2^t B}$ is completely analogous and so omitted.

We recall that by the averaged local law for single resolvent in~\eqref{eq:avll1G}  it follows the rigidity of the eigenvalues (see e.g.~\cite[Theorem 7.6]{MR3068390} or~\cite{MR2871147}):
\begin{equation}\label{eq:rig}
  |\lambda_i-\gamma_i|\prec \frac{1}{N^{2/3}\widehat{i}^{1/3}},
\end{equation}
where \(\widehat{i}:=i\wedge (N+1-i)\), and \(\gamma_i\) are the classical eigenvalue locations (\emph{quantiles}) defined by
\begin{equation}\label{eq:quantin}
  \int_{-\infty}^{\gamma_i} \rho(x)\, \dif x=\frac{i}{N}, \qquad i\in [N],
\end{equation}
where we recall \(\rho(x)=\rho_\mathrm{sc}(x)=(2\pi)^{-1}\sqrt{(4-x^2)_+}\).

For $E_1,E_2\in [-2,2]$, we recall that by the definition of \(z(E_i,J)=E_i+\ii\eta(E_i,J)\) above Lemma~\ref{lem:compa} it follows that \(J=N\eta(E_1,J)\rho(z(E_1,J))=N\eta(E_2,J)\rho(z(E_2,J))\) and thus, together with~\eqref{eq:quantin} we conclude that there is constant \(C\) such that for any \(a, a_0\) it holds that
\begin{equation}\label{eq:levspac1a}
  |\gamma_a-\gamma_{a_0}|\le\eta( \gamma_{a_0}) \Rightarrow |a-a_0|\le CJ, \qquad |a-a_0|\le J \Rightarrow
  |\gamma_a-\gamma_{a_0}|\le C\eta( \gamma_{a_0}).
\end{equation}
With a slight abuse of notation we will write this relation as
\begin{equation}\label{eq:levspaca}
  |\gamma_a-\gamma_{a_0}|\lesssim \eta( \gamma_{a_0}) \Leftrightarrow |a-a_0|\lesssim J,
\end{equation}
since the implicit constants in the \(\lesssim\) relation  will be  irrelevant for our analysis. With the short-hand notations $\Xi=\Xi_J^B$, \(z_i=z(E_i,J)\), $G_i=G(z_i)$, $\eta_i=\eta(E_i,J)$, and \(\rho_i=\rho(z_i)\), then by~\eqref{eq:rig}, and writing \(\braket{\Im G_1 B \Im G_2 B}\) in spectral decomposition
\begin{equation}\label{eq:speccdec}
  \braket{\Im G_1 B \Im G_2 B}=\sum_{ab} \frac{R_{ab}S_{ab}}{N},\quad R_{ab}:=\frac{\eta_1\eta_2}{|\lambda_a-z_1|^2|\lambda_b-z_2|^2},\quad S_{ab}:=|\braket{\bm u_a,B \bm u_b}|^2,
\end{equation}
it follows that
\begin{equation}\label{eq:resba}
  \Xi^2\lesssim \sup_{E_1, E_2\in [-2,2]}  \frac{\braket{\Im G_1 B \Im G_2 B}}{\rho_1\rho_2}\lesssim \Xi^2,
\end{equation}
with very high probability on the set where the rigidity bound~\eqref{eq:rig} holds. The lower bound in~\eqref{eq:resba} is trivial by choosing \(E_1=\gamma_{a_0}\) and \(E_2=\gamma_{b_0}\). To prove the upper bound in~\eqref{eq:resba} we use the local averaging formula 
\begin{equation}\label{tricka}
  \sum_{ab} R_{ab} S_{ab} \sim \sum_{a_0b_0} \frac{1}{(2J)^2} \sum_{\substack{\abs{a-a_0}< J, \\ \abs{b-b_0}< J}}  R_{ab} S_{ab} \sim \sum_{a_0b_0}  R_{a_0b_0} \frac{1}{(2J)^2} \sum_{\substack{\abs{a-a_0}< J, \\ \abs{b-b_0}< J}}   S_{ij}
\end{equation}
for general non-negative \(R_{ab},S_{ab}\) such that \(R_{ab}\sim R_{a_0b_0}\) whenever \(\abs{a-a_0}\vee\abs{b-b_0}\le J\) which is applicable for \(R_{ab}\) in~\eqref{eq:speccdec} as a consequence of the rigidity bound in~\eqref{eq:rig}, the relation in~\eqref{eq:levspaca}, and the fact that we can choose the \(\xi>0\) so that \(N^\xi\), coming from the rigidity high probability bound, is much smaller than \(J\ge N^\epsilon\). Finally we note that \(N^{-2}\sum_{ab}R_{ab}=\braket{\Im G_1}\braket{\Im G_2} \sim \rho_1\rho_2\) by \(|\braket{\Im G_i-\Im m_i}|\prec \rho_i\) from~\eqref{eq:avll1G}, concluding the proof of the upper bound in~\eqref{eq:resba}. 
\end{proof}

\section{Local laws for one and two resolvents}\label{sec:loclaws}
In this section we prove the local laws in Propositions~\ref{pro:lambll1}--\ref{pro:lambll}. By the self consistent equation for \(m\) in~\eqref{eq:mdesc}, and by \(G(W-z)=I\), we have 
\begin{equation}\label{eq:Gnounder}
  G=m-mWG - m \braket{G} G + m\braket{G-m}G.
\end{equation}
For any given functions \(f,g\) of the Wigner matrix \(W\) we define the renormalisation of the product \(g(W)Wf(W)\) (denoted by underline) as follows:
\begin{equation}\label{eq:defunder}
  \underline{g(W)Wf(W)}:=g(W)Wf(W)-\widetilde{\E }g(W)\widetilde{W}(\partial_{\widetilde{W}}f)(W)-\widetilde{\E }(\partial_{\widetilde{W}}g)(W)\widetilde{W}f(W),
\end{equation}
where \(\partial_{\widetilde W} f(W)\) denotes the directional derivative of the function \(f\) in the direction \(\widetilde{W}\) at the point \(W\), and \(\widetilde{W}\) is an independent copy of \(W\). The definition is chosen such that it subtracts the second order term in the cumulant expansion, in particular if all entries of \(W\) were  Gaussian then we  had
\(\E  \underline{g(W)Wf(W)}=0\). Note that the definition~\eqref{eq:defunder} only makes sense if it is clear to which \(W\) the underline refers, i.e.\ it would be ambiguous if \(f(W)=W\). In our applications, however, each underlined term contains exactly a single \(W\) factor, and hence such ambiguities will not arise. As a special case we have that
\begin{equation}\label{eq:newunder}
  \underline{WG}=WG+\braket{G}G  + \sigma \frac{G^t G}{N} + \frac{\widetilde{w_2}}{N} \diag(G)G,
\end{equation}
recalling the notation $\widetilde{w_2}=w_2-1-\sigma$ from Assumption~\ref{ass:entr}. Then by~\eqref{eq:Gnounder} and and~\eqref{eq:newunder}, it follows that
\begin{equation}\label{eq:G}
  G=m-m\underline{WG}+\frac{m\sigma}{N}G^t G+\frac{\widetilde{w_2}}{N}\diag(G)G+m\braket{G-m}G.
\end{equation}

From~\eqref{eq:G} one can already see that in order to get a local law for \(G\) it is essential to estimate the underlined term \(\underline{WG}\) in averaged and isotropic sense. In order to prove Proposition~\ref{pro:lambll} we need bounds for underlined terms involving not only one \(G\) but also two \(G\)'s (see e.g.~\eqref{eq:2G} below). We now state the bound for these terms and for longer products of resolvents and deterministic matrices both in an averaged and in an isotropic sense, since the proof for products with more than two resolvents is very similar to the cases we need.

For \(l\in\N\) we consider renormalised alternating products of resolvents \(G_1,\ldots,G_l\) and deterministic matrices \(B_1,\ldots,B_l\) in averaged and isotropic form, 
\begin{equation}\label{general prod resolvents}
  \braket{\un{WG_1B_1G_2B_2\cdots G_l B_l}}, \quad \braket{\vx,\un{W G_1 B_1 G_2\cdots B_{l-1}G_l}\vy}.
\end{equation}
Each resolvent \(G_k\) is evaluated at a (potentially) different spectral parameter \(z_k\in\C\setminus\R\) and other than \(G_k=G(z_k)\) we allow each resolvent to be transposed and/or being the imaginary part, i.e.\ 
\begin{equation}\label{G list} G_k\in\set{G(z_k),G(z_k)^t,\Im G(z_k),(\Im G(z_k))^t}.\end{equation}
Note that adjoints of resolvents can be included in the products by conjugating the spectral parameter since \(G(z)^\ast=G(\ov z)\). For a given product of the form~\eqref{general prod resolvents} we consider three sets \(\mathfrak{i},\mathfrak a,\mathfrak t\) of indices, recording special structural properties of \(G_k,B_k\).  By definition, the set \(\mathfrak{i}\subset[l]\) collects those indices \(k\in [l]\) for which \(G_k\in\set{\Im G(z_k),(\Im G(z_k))^t}\).  For the choice of the sets \(\mathfrak a,\mathfrak t\) we allow a certain freedom described in the theorem.

\begin{theorem}\label{chain G underline theorem}
Fix $\epsilon>0$,  let \(l\in\N\), \(z_1,\dots, z_l\in \C\setminus\R\) and for \(k\in[l]\) let \(G_k\) be as in~\eqref{G list} and \(B_k\) be deterministic \(N\times N\) matrices, and \(\vx,\vy\) be deterministic vectors with bounded norms \(\norm{B_k}\lesssim1\), \(\norm{\vx}+\norm{\vy}\lesssim1\). Set 
  \begin{equation}
    L := N\min_{k} (\eta_k\rho_k),\quad \rho^\ast := \max_{k}\rho_k,\quad \eta_\ast:=\min_k \eta_k, 
  \end{equation}
  with \(\eta_k:=\abs{\Im z_k}\), \(\rho_k:=\rho(z_k)=\abs{\Im m(z_k)}/\pi\) and assume \(L\ge N^\epsilon\) and \(\eta_\ast\lesssim 1\). Let \(\mathfrak a,\mathfrak t\) denote disjoint sets of indices, \(\mathfrak a\cap \mathfrak t=\emptyset\), such that for each \(k\in\mathfrak{a}\) we have \(\braket{B_k}=0\), and for each \(k\in\mathfrak{t}\) exactly one of \(G_k,G_{k+1}\) is transposed, where in the averaged case and \(k=l\) it is understood that \(G_{l+1}=G_1\). Recall the notations $\Pi_+:=\Pi_L+1$, \(\Lambda_{+}^{B}:=\Lambda_L^B+\norm{B}\). Then with \(a:=\abs{\mathfrak a}, t:=\abs{\mathfrak t}\), we have the following bounds: 
  \begin{itemize}
    \item[(av1)] For \(\mathfrak{a}=\mathfrak{t}=\emptyset\) we have
    \begin{equation}\label{eq:under B}
      \begin{split}
        \abs{\braket{\un{WG_1B_1G_2B_2\cdots G_l B_l}}} &\prec \frac{\rho^{\ast}}{ N \eta_\ast^{l} }.
      \end{split}   
    \end{equation}
    \item[(av2)] For \(\mathfrak a,\mathfrak t\subset[l]\), \(\abs{\mathfrak a\cup \mathfrak t}\ge 1\) we have the bound
    \begin{equation}\label{eq:under} 
      \begin{split}
        \abs{\braket{\un{WG_1B_1G_2B_2\cdots G_l B_l}}} &\prec \frac{(\sqrt{N}\eta_\ast)^{a+t}}{N\eta_\ast^l} \sqrt{\frac{\rho^{\ast}}{N\eta_\ast}}  \Pi_+^t\prod_{k\in\mathfrak{a}}\Lambda_+^{B_k}.
      \end{split} 
    \end{equation} 
    \item[(iso)] For \(\mathfrak a,\mathfrak t\subset[l-1]\) and for any \(0\le j<l\) we have the bound 
    \begin{equation}\label{eq:underIso}
        \abs{\braket{\vx,\un{G_1 B_1\cdots G_{j} B_j W G_{j+1}B_{j+1}\cdots B_{l-1}G_l }\vy}}  \prec \frac{(\sqrt{N}\eta_\ast)^{a+t}}{\eta_\ast^{l-1}} \sqrt{\frac{\rho^{\ast}}{N\eta_\ast}}   \Pi_+^t\prod_{k\in\mathfrak{a}}\Lambda_+^{B_k},
    \end{equation}
    where the \(j=0\) case is understood as \(\braket{\vx,\un{W G_1 B_1\cdots B_{l-1}G_l}\vy}\). 
  \end{itemize}
  In case \(\prod_{k\in\mathfrak i} \rho_k\lesssim (\rho^\ast)^{b+1}\), the bounds~\eqref{eq:under B}--\eqref{eq:underIso} remain valid if the rhs.\ are multiplied by the factor \((\rho^\ast)^{-b-1}\prod_{k\in\mathfrak i} \rho_k\), where \(b:=l\) in case of~\eqref{eq:under B}, \(b:=l-a-t\) in case of~\eqref{eq:under}, and \(b:=l-a-t-1\) in case of~\eqref{eq:underIso}. Moreover, for any \(\eta_\ast\ge 1\) we have the bounds 
  \begin{equation}\label{eq large eta bounds}
    \begin{split}
      \abs{\braket{\un{WG_1B_1G_2B_2\cdots G_l B_l}}} &\prec \frac{1}{N\eta_\ast^l},\\  
      \abs{\braket{\vx,\un{G_1 B_1\cdots G_{j} B_j W G_{j+1}B_{j+1}\cdots B_{l-1}G_l }\vy}} &\prec \frac{1}{N^{1/2}\eta_\ast^l}.
    \end{split}
  \end{equation}
\end{theorem}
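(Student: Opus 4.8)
The plan is to control, for every even $p$ and every small $\delta>0$, the $p$-th moment $\E\abs{\braket{\un{WG_1B_1\cdots G_lB_l}}}^p$ (and its isotropic analogue) by $N^{\delta p}$ times the $p$-th power of the claimed right-hand side; $\prec$ then follows from Markov's inequality. After writing the moment as a product of $p/2$ holomorphic and $p/2$ antiholomorphic copies, one performs a cumulant expansion in the single matrix $W$ carried by the underline of the first copy: its derivatives act on every resolvent in all $p$ copies, producing (i) second-order terms in which the distinguished $W$ contracts with a resolvent in a \emph{different} copy, (ii) second-order terms in which it contracts within its own copy, and (iii) terms from cumulants of order $\ge3$. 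The subtraction built into \eqref{eq:defunder} is precisely designed to annihilate the ``tadpole'' self-energy among those of type (ii), so that every surviving term contains strictly more resolvent factors than one started with, together with compensating smallness (a higher-order cumulant, a Ward reduction $G_kG_k^\ast=\Im G_k/\eta_k$ that removes a resolvent at the cost of a factor $\rho_k/\eta_k$, or a spectral-decomposition reduction of a long monomial to the two-resolvent quantities of Proposition~\ref{pro:lambll}). This structure makes an induction on the number of resolvents viable, and I would run it as a self-improving estimate: with the control quantities $\Lambda_+^{B_k},\Pi_+$ on the right-hand sides replaced by a crude polynomial a priori bound the expansion closes, and iterating the resulting inequality (using the classical single-$G$ local law \eqref{eq:oldlocal} as input) brings the exponents down to the sharp ones. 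The large-$\eta$ estimates \eqref{eq large eta bounds} are the trivial base case: for $\eta_\ast\ge1$ one has $\norm{G_k}\le1$, and the naive power counting—no Ward identities, no spectral reductions—already yields them.

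The combinatorial bookkeeping is organized by Feynman graphs whose vertices are the $p$ resolvent chains and whose (hyper)edges are the cumulant couplings; each differentiation splits a chain and inserts a new resolvent with at most two new summation indices. A graph's weight is read off from four competing sources: (a) a factor $N$ per freely summed index against $N^{-(k-1)/2}$ per $(k{+}1)$-st order cumulant edge; (b) a factor $\rho_k/\eta_k$ per resolvent removed by a Ward identity; (c) a factor $\eta_\ast^{-1}$ per remaining resolvent; and, decisively, (d) a \emph{gain} of a full power $\sqrt{N}\eta_\ast$ for each traceless $B_k$ with $k\in\mathfrak a$ and for each transposed adjacency $k\in\mathfrak t$. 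The gain (d) is realized by the two-resolvent estimates: a block $\braket{\cdots G B_k\cdots B_k G\cdots}$ with $\braket{B_k}=0$ becomes, after spectral decomposition and \eqref{eq:llimGAimGAtranspose}, smaller by a factor $\sqrt{N}\eta_\ast$ accompanied by $\Lambda_+^{B_k}$ than it would be for $B_k=I$, and similarly a $G\,G^t$ adjacency is gained through $\abs{\braket{G_1G_2^t}}\lesssim\Pi_+^2$ from \eqref{eq:g1g2t}. Summing these weights over all graphs of fixed expansion order reproduces exactly the prefactor of \eqref{eq:under}; the refinement multiplying the bounds by $(\rho^\ast)^{-b-1}\prod_{k\in\mathfrak i}\rho_k$ comes from using the sharp size $\rho_k$ rather than $\rho^\ast$ for every factor that is genuinely an imaginary part and propagating it through the Ward identities. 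The isotropic bound \eqref{eq:underIso} is identical except that the boundary vectors $\vx,\vy$ act as half-edges that carry no summation index, which accounts for the different powers of $N$ and $\eta_\ast$.

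The main obstacle is item (d): the cancellation encoded by $\braket{B_k}=0$ is fragile and can be spent during the expansion, most transparently by a Schwarz step of the type \eqref{Schw} that replaces a traceless $B_k$ by the non-traceless $B_kB_k^\ast$. I would classify every term generated by the expansion as \emph{tame}—the traceless or transposed structure survives, so the corresponding $\Lambda_+^{B_k}$ or $\Pi_+$ gain is immediate—or \emph{wild}, in which some such structure has been destroyed, and prove the key combinatorial lemma that each wild term is automatically accompanied by a compensating resource (a higher-order cumulant or an extra free index summation) returning at least the lost factor $\sqrt{N}\eta_\ast$. The tightest case, where there is no slack at all, is the sub-diagram built only from second-order cumulants: there one must verify that \emph{every} $\mathfrak a$-marked factor can be routed into a two-resolvent block without waste, which forces a precise pairing of these factors across the holomorphic and antiholomorphic chains—this is the refined combinatorics that extracts the zero-trace effect to the maximal extent and is responsible for the optimality of the final bounds. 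Carrying out this classification uniformly in $l$, $p$, and the structure sets $\mathfrak a,\mathfrak t,\mathfrak i$, and interlocking it with the induction on the number of resolvents that collapses long monomials onto the $l=2$ estimates of Proposition~\ref{pro:lambll}, is where essentially the entire difficulty is concentrated.
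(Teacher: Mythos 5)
Your overall architecture -- high-moment cumulant expansion organized by Feynman graphs, power counting of $N$-factors against cumulant orders, Ward identities, and a self-improving estimate in which the orthogonality of traceless observables and of $G$ against $G^t$ each buy a factor $\sqrt{N}\eta_\ast$ -- matches the paper's Section on Feynman diagrams. But there is a concrete circularity in the mechanism you propose for realizing the gains. You write that a traceless block is controlled ``after spectral decomposition and \eqref{eq:llimGAimGAtranspose}'' and a $GG^t$ adjacency ``through $\abs{\braket{G_1G_2^t}}\lesssim\Pi_+^2$ from \eqref{eq:g1g2t}.'' Those two-resolvent local laws are Proposition~\ref{pro:lambll}, whose proof consumes precisely the bounds \eqref{eq:under} you are trying to establish (the error terms $\braket{\un{WG_1A\Im G_2}A'}$, $\braket{\un{WG_1G_2^t}}$, etc.\ are estimated by Theorem~\ref{chain G underline theorem}). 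The paper avoids this by proving a standalone chain estimate (Lemma~\ref{degree two lemma}): traces and entries of arbitrarily long products $G_1B_1\cdots G_lB_l$ are bounded \emph{directly} by spectral decomposition, eigenvalue rigidity, and the very definition of the $J$-averaged overlap quantities $\Xi_J^B,\bar\Xi_J^B$ entering $\Lambda_+,\Pi_+$, with Cauchy--Schwarz and Ward identities handling the non-orthogonal segments. The reduction is to the eigenvector overlaps themselves, not to the $l=2$ local laws; only this keeps the logical order (Theorem~\ref{chain G underline theorem} $\Rightarrow$ Propositions~\ref{pro:lambll1}--\ref{pro:lambll} $\Rightarrow$ Theorem~\ref{theo:addchan}) acyclic.

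Relatedly, your proposed ``induction on the number of resolvents that collapses long monomials onto the $l=2$ estimates'' cannot close: the cumulant expansion of the $2p$-th moment of even the $l=2$ quantity generates monomials of unbounded length (this is the BBGKY-type closure problem the paper highlights), so length is not a viable induction parameter; the closure is achieved exactly by the length-uniform chain bounds in terms of $\Lambda_+,\Pi_+$. Finally, your tame/wild dichotomy with the claim that every loss of an orthogonality structure is compensated by a higher cumulant or an extra summation is the right heuristic, but it is asserted rather than proved; in the paper this is the content of the selection rules for orthogonality vertices and the counting properties \ref{Va Vcyc sum claim}--\ref{a rule} (in particular the inequality $n\ge 2\abs{E_\kappa^2}+\abs{E_\kappa^{\ge3}}-2p$ controlling how many degree-two cumulant actions preserve the structure), together with the Wardable-edge improvement that produces the additional factor $\sqrt{\rho^\ast/(N\eta_\ast)}$ in \eqref{eq:under}, which your power counting does not account for.
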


\begin{remark}[Asymptotic orthogonality effect]
  The main result of Theorem~\ref{chain G underline theorem} are~\eqref{eq:under} and its isotropic counterpart~\eqref{eq:underIso}. The essential part is the factor \((\sqrt{N}\eta_\ast)^{a+t}\) in~\eqref{eq:under} since the additional factors \(\Pi_+\) and \(\Lambda_+\) are a posteriori  shown to be essentially \(\landauO{1}\), c.f.\ Theorem~\ref{theo:addchan}. Compared with the robust bound~\eqref{eq:under B} in the relevant small \(\eta_\ast\) regime the bound~\eqref{eq:under} represents an improvement of \(\sqrt{N} \eta_\ast\) for each occurrence when the asymptotic orthogonality
  can be exploited, either due to a traceless matrix $B$ or to a switch between
  a resolvent  and its transpose. In addition, compared to the robust averaged bound~\eqref{eq:under B} there is a further improvement of \(\sqrt{\rho^\ast/N\eta_\ast}\) in~\eqref{eq:under} if at least one orthogonality effect is exploited, enabling the optimal \(GA\) local law in Theorem~\ref{pro:1g}. We note that in case when \(\sqrt{N}\eta_\ast\gg 1\) the robust bounds~\eqref{eq:under B} and~\eqref{eq:underIso} with \(a+t=0\) are always available also in the presence of traceless deterministic matrices and alternating \(G,G^t\) simply by choosing the sets \(\mathfrak a,\mathfrak t\) to be empty.
\end{remark}
\begin{remark}[Alternative renormalisation]\label{remark alt underline}
    In~\eqref{eq:defunder} we defined the renormalisation with respect to an independent copy of \(W\) while in some previous papers~\cite{1912.04100} the same notation was used to denote the renormalisation with respect to a suitable reference ensemble (e.g.\ the GUE-ensemble in the present paper or the complex Ginibre ensemble in case of~\cite{1912.04100}). However, mostly these two possible definitions only differ in some sub-leading terms. For example, denoting the renormalisation with respect to an independent GUE-matrix by 
    \[
        \un{Wf(W)}_\mathrm{GUE}:=Wf(W)-\wt\E_\mathrm{GUE} \wt W (\partial_{\wt W} f)\] 
    we have trivially
    \[\braket{\un{WG}}-\braket{\un{WG}_\mathrm{GUE}}= \sigma\frac{\braket{G^t G}}{N} + \wt w_2 \frac{\braket{\diag(G)G}}{N} = \landauOprec*{\frac{\rho}{N\eta}}.\]
    The difference between the two renormalisations becomes relevant in Theorem~\ref{chain G underline theorem} only whenever at least one transposed resolvent occurs since then for example
    \[\braket{\un{WG^t}}-\braket{\un{WG^t}_\mathrm{GUE}}= \sigma\braket{G}^2 + \wt w_2 \frac{\braket{\diag(G)G}}{N} \sim 1.\]
    However, this is the only relevant case and the statement of Theorem~\ref{chain G underline theorem} holds true verbatim if \(\un{Wf(W)}\) is replaced by \(\un{Wf(W)}_\mathrm{GUE}\) in case no resolvents are transposed, c.f.\ Remark~\ref{remark alt underline expl} in Section~\ref{sec cum exp}. 
\end{remark}

Using the bounds for the underlined terms in Theorem~\ref{chain G underline theorem} we conclude the proof of  Proposition~\ref{pro:lambll1}--\ref{pro:lambll}. We start with the proof of the local law for \(\braket{GA}\) and then we prove the local laws and bounds for two \(G\)'s.

\begin{proof}[Proof of Proposition~\ref{pro:lambll1}]
    
    Using the equation for \(G\) in~\eqref{eq:G}, we start writing the equation for \(GA\):
    \begin{equation}\label{eq:1GA}
      GA=mA-m\underline{WG}A+m\braket{G-m}GA+\frac{m\sigma}{N}G^t GA+\frac{\widetilde{w_2}}{N}\diag(G)GA,
    \end{equation}
    where we recall the definition of \(\underline{WG}\) in~\eqref{eq:defunder}. Then, taking the average in~\eqref{eq:1GA}, using that \(\braket{A}=0\) and that \(|\braket{G-m}|\prec (N\eta)^{-1}\) by the first bound in~\eqref{eq:avll1G}, we conclude
    \begin{equation*}
      \left[1+\mathcal{O}_\prec\left(\frac{1}{N\eta}\right)\right]\braket{GA}=-m\braket{\underline{WG}A}+\frac{m\sigma}{N}\braket{G^t GA}+\frac{\widetilde{w_2}}{N}\braket{\diag(G)GA}.
    \end{equation*}
    Next we notice that 
    \begin{equation}\label{eq:aux1}
      \frac{1}{N}|\braket{G^t GA}|\le \frac{1}{N}\braket{|G|}^{1/2}\braket{GA|G^t|AG^*}^{1/2}\prec \frac{\Lambda_+\sqrt{\rho}}{N\sqrt{\eta}},
    \end{equation}
    where in the last inequality we used Lemma~\ref{degree two lemma}, and the notation $\Lambda_+:=\Lambda_L^A+\|A\|$. Additionally, we also have that
    \begin{equation}\label{eq:aux2}
      \frac{1}{N}|\braket{\diag(G)GA}|=\left|\frac{1}{N^2}\sum_i G_{ii} (GA)_{ii}\right|\prec \frac{1}{N},
    \end{equation}
    where we used that \(|G_{ii}|+|(GA)_{ii}|\prec 1\) by~\eqref{eq:oldlocal}. Combining~\eqref{eq:aux1}--\eqref{eq:aux2} we finally conclude that
    \begin{equation}\label{eq:fin1ga}
      \braket{GA}=-m\braket{\underline{WG}A}+\mathcal{O}_\prec\left(\frac{\Lambda_+\sqrt{\rho}}{N\sqrt{\eta}}\right)=\mathcal{O}_\prec\left(\frac{\Lambda_+\sqrt{\rho}}{N\sqrt{\eta}}\right),
    \end{equation}
    where we used \(|\braket{\underline{WG}A}|\prec \Lambda_+\rho^{1/2} N^{-1}\eta^{-1/2}\) by~\eqref{eq:under}. This concludes the proof of~\eqref{eq:singGA}.
  \end{proof}
  
  Next, using the local law for single resolvent proven above, we proceed with the proof of the bounds for for products of two resolvents and deterministic traceless matrices.

  \begin{proof}[Proof of Proposition~\ref{pro:lambll}]

  We start writing the equation for generic products of two \(G\)'s \(G_1B_1G_2B_2\), where \(G_i=(W-z_i)^{-1}\) and \(B_1,B_2\) are deterministic matrices. Using the equation~\eqref{eq:1GA} for \(G_1B_1\), writing \(G_2=m_2+(G_2-m_2)\), we obtain
  \begin{equation}\label{eq:2G}
    \begin{split}
      G_1B_1G_2B_2&=m_1m_2 B_1B_2+m_1B_1(G_2-m_2)B_2-m_1\underline{WG_1B_1G_2}B_2 \\
      &\quad+m_1\braket{G_1-m_1}G_1B_1G_2B_2 + m_1\braket{G_1B_1G_2}G_2B_2 \\
      &\quad+\frac{m_1\sigma}{N}G_1^t G_1B_1G_2B_2+\frac{m_1\sigma}{N} (G_1B_1G_2)^t G_2B_1 \\
      &\quad+ \frac{m_1\widetilde{w_2}}{N}\diag(G_1)G_1B_1G_2B_2+\frac{m_1 \widetilde{w_2}}{N}\diag(G_1B_1G_2)G_2B_2,
    \end{split}
  \end{equation}
  where we used that
  \begin{equation}\label{eq:doubunder}
    \underline{WG_1B_1G_2}=\underline{WG_1}B_1G_2+\braket{G_1B_1G_2}G_2+\frac{\sigma}{N}(G_1B_1G_2)^t G_2+\frac{\widetilde{w_2}}{N}\diag(G_1B_1G_2)G_2,
  \end{equation}
  with \(\underline{WG_1}\) from~\eqref{eq:newunder}. The identity in~\eqref{eq:doubunder} follows by the definition of underline in~\eqref{eq:defunder}.
  
  \begin{remark}\label{rmk Lambda plus}
For notational simplicity, throughout the proof of Proposition~\ref{pro:lambll} we use the notations
\[
\Lambda_+:= \Lambda_+^A\vee\Lambda_+^{A'},
\]
rather than distinguishing the different $\Lambda$'s. However, the proof naturally yields in fact a factor of \(\Lambda_+^A\) for each traceless \(A\) giving the bounds in Propositions~\ref{pro:lambll}.
  \end{remark}

  \begin{proof}[Proof of the bounds~\eqref{eq:boundimGGA}]
    We focus only on the proof of the bound for \(\braket{G_1G_2A}\), the bounds for \(\braket{G_1^t G_2A}\), \(\braket{\Im G_1 A G_2}\), \(\braket{\Im G_1^t A G_2}\), \(\braket{\Im G_1 A \Im G_2}\), and \(\braket{\Im G_1^t A \Im G_2}\) are completely analogous and so omitted, modulo the bound for the underlined term. In particular, the bound in~\eqref{eq:specunder} has to be replaced by
    \[
    |\braket{\underline{WG_2\Im G_1}A}|\prec \frac{\rho_1\Lambda_+}{\sqrt{NK}\eta_*},\quad |\braket{\underline{W\Im G_2 \Im G_1}A}|\prec \frac{\rho_1\rho_2\Lambda_+}{\sqrt{NK}\eta_*},
    \]
    for \(\braket{\Im G_1 A G_2}\), \(\braket{\Im G_1^t A G_2}\) and \(\braket{\Im G_1 A \Im G_2}\), \(\braket{\Im G_1^t A \Im G_2}\), respectively, where $K:=N\eta_*\rho^*$.
    
    Choosing \(B_1=I\) and \(B_2=A\) in~\eqref{eq:2G}, with \(\braket{A}=0\), using \(|\braket{G_1-m_1}|\prec (N\eta_1)^{-1}\) we find that
    \begin{align}\nonumber
        \left[1+\mathcal{O}_\prec\left(\frac{1}{N\eta_2}\right)\right]\braket{G_1G_2A}&=-m_1\braket{\underline{WG_1G_2}A}+m_1\braket{G_2A}+m_1\braket{G_1G_2}\braket{G_2A}\\\label{eq:12ggia}
        &\quad+\frac{m_1\sigma}{N}\braket{G_1^t G_1G_2A}+\frac{m_1\sigma}{N}\braket{(G_1G_2)^t G_2A} \\\nonumber
        &\quad+\frac{m_1 \widetilde{w_2}}{N}\braket{\diag(G_1)G_1G_2A}+\frac{m_1 \widetilde{w_2}}{N}\braket{\diag(G_1G_2)G_2A}.
    \end{align}
    Then using Cauchy-Schwarz we have that
    \begin{equation}\label{eq:1need}
      \begin{split}
        \frac{1}{N}|\braket{G_1G_2AG_1^t}|&\le \frac{1}{N}\braket{G_1G_1^*}^{1/2}\braket{G_2AG_1^t(G_1^t)^*AG_2^*}^{1/2}\prec \frac{\sqrt{\rho_1}}{N\eta_1\sqrt{\eta_2}}\braket{\Im G_2 A \Im G_1^t A}^{1/2} \\
        &\quad\prec \frac{\rho_1\sqrt{\rho_2}\Lambda_+}{N\eta_1\sqrt{\eta_2}}\le \frac{\rho^*\sqrt{\rho_*}\Lambda_+}{\sqrt{NK\eta_1\eta_2}},
      \end{split}
    \end{equation}
    where we used the Ward identity, that \(\braket{\Im G_1}\prec \rho_1\), and that $K=N\eta_*\rho^*$. In penultimate inequality of~\eqref{eq:1need} we also used Lemma~\ref{degree two lemma} to prove that \(\braket{\Im G_2 A \Im G_1^t A}\prec \rho_1\rho_2\Lambda_+^2\). Using exactly the same computations we conclude the same bound for \(\braket{(G_1G_2)^t G_2A}\) as well. Now we show that the terms with a pre-factor \(\widetilde{w_2}\) are negligible. We start with
    \begin{equation}\label{eq:2need}
      \frac{1}{N}|\braket{\diag(G_1)G_1G_2A}|=\left|\frac{1}{N^2}\sum_i G_{ii} (G_1G_2A)_{ii}\right|\prec \frac{\sqrt{\rho_1\rho_2}}{N\sqrt{\eta_1\eta_2}},
    \end{equation}
    obtained using that \(|G_{ii}|\prec 1\), by the isotropic law~\eqref{eq:oldlocal}, and \(|(G_1G_2A)_{ii}|\prec \sqrt{\rho_1\rho_2/\eta_1\eta_2}\) by a simple Schwarz inequality. The bound for \(|\braket{\diag(G_1G_2)G_2A}|\) is analogous and so omitted. Combining~\eqref{eq:12ggia} with~\eqref{eq:1need}--\eqref{eq:2need}, using that \(|\braket{G_2A}|\prec \sqrt{\rho_2}N^{-1}\eta_2^{-1/2}\) by~\eqref{eq:singGA}, and dividing by the factor in the lhs.\ of~\eqref{eq:12ggia}, we conclude that
    \begin{equation}\label{eq:12gginew}
      \begin{split}
        \braket{G_1G_2A}&=-m_1\braket{\underline{WG_1G_2}A}+m_1\braket{G_1G_2}\braket{G_2A}+\mathcal{O}_\prec\left(\frac{\rho^*\Lambda_+}{\sqrt{NK}\eta_*}\right) =\mathcal{O}_\prec\left(\frac{\rho^*\Lambda_+}{\sqrt{NK}\eta_*}\right),
      \end{split}
    \end{equation}
    where to go from the first to the second line we used that \(|\braket{G_2A}|\Lambda_+ \prec \rho_2^{1/2}N^{-1}\eta_2^{-1/2}\) by~\eqref{eq:fin1ga}, that \(|\braket{G_1G_2}|\prec \sqrt{\rho_1\rho_2/(\eta_1\eta_2)}\) by a Schwarz inequality, and that 
    \begin{equation}\label{eq:specunder}
      \abs*{\braket{\underline{WG_1G_2}A}}\prec  \frac{\rho^*\Lambda_+}{\sqrt{NK}\eta_*},
    \end{equation}
    by~\eqref{eq:under}. This concludes the proof of the bound of \(|\braket{G_1G_2A}|\).
  \end{proof}

  \begin{proof}[Proof of the bound in~\eqref{eq:isll2Gpro} for \(\braket{{\bm x}, G_1AG_2{\bm y}}\)]
    Using the bound for \(\braket{G_1G_2A}\) and the estimates in Lemma~\ref{degree two lemma} below as an input, the proof of the bound
    \begin{equation}\label{eq:isbgagnew}
      |\braket{{\bm x}, G_1AG_2{\bm y}}|\prec \Lambda_+\sqrt{\frac{\rho^*}{\eta_*}},
    \end{equation}
    follows by exactly the same computations as in the proof of the bound for \(|\braket{G_1G_2A}|\).
  \end{proof}

  \begin{proof}[Proof of the local laws for two resolvents in~\eqref{eq:llGAGAtranspose} and~\eqref{eq:g1g2t}]
    
    We focus only on the proof of the local law for \(\braket{G_1AG_2A'}\), the proof of the local law for \(\braket{G_1^t AG_2A'}\) is exactly the same. The prof of the local law for \(\braket{G_1G_2^t}\) is also analogous to the proof of the local law for \(\braket{G_1AG_2A'}\) with the only difference that the multiplicative factor in the rhs.\ of~\eqref{eq:12ggiaa} has to be replaced by
    \[
    1-\sigma m_1m_2+\mathcal{O}_\prec\left(\frac{1}{N\eta_*}\right).
    \]
    This difference does not create any change since for \(|\sigma|<1\) the stability factor \(1-\sigma m_1m_2\) is bounded from below by \(1-|\sigma|\).
    
    Choosing \(B_1=A\) and \(B_2=A'\) in~\eqref{eq:2G}, with \(\braket{A}=\braket{A'}=0\), and using that \(|\braket{G_1-m_1}|\prec (N\eta_1)^{-1}\), we find that
    \begin{equation}\label{eq:12ggiaa}
      \begin{split}
        &\Bigl(1+\landauO[\Big]{\frac{1}{N\eta_2}}\Bigr)\braket{G_1AG_2A'}\\
        &\;=m_1m_2\braket{AA'}-m_1\braket{\underline{WG_1AG_2}A'} +\braket{G_1AG_2}\braket{G_2A'}+\frac{m_1\sigma}{N}\braket{G_1^t G_1AG_2A'} \\
        &\quad+\frac{m_1\sigma}{N}\braket{(G_1AG_2)^t G_2A'}+\frac{m_1\widetilde{w_2}}{N}\big[\braket{\diag(G_1)G_1AG_2A'}   +\braket{\diag(G_1AG_2)G_2A'}\big].
      \end{split}
    \end{equation}
    We start with the bound
    \begin{equation}\label{eq:bda1}
      \begin{split}
        \frac{1}{N}|\braket{G_1^t G_1AG_2A'}|&\le \frac{1}{N}\braket{G_1A|G_2|AG_1^*}^{1/2}\braket{(G_1^t)^*A'|G_2|A'(G_1)^t}^{1/2}\\
        &=\frac{1}{N\eta_1} \braket{\Im G_1A|G_2|A}^{1/2}\braket{\Im G_1^t A'|G_2|A'}^{1/2}\prec\frac{\rho_1\Lambda_+^2}{N\eta_1},
      \end{split}
    \end{equation}
    where we used a Schwarz inequality and that \(|\braket{\Im G_1A|G_2|A}|\prec  \rho_1\Lambda_+^2\) by Lemma~\ref{degree two lemma} below. Following exactly the same computations we conclude that \(|\braket{(G_1AG_2)^t G_2A'}|\prec  \Lambda_+^2\rho_2\eta_2^{-1}\). Similarly, we also bound
    \begin{equation}\label{eq:bda2}
      \frac{1}{N}|\braket{\diag(G_1)G_1AG_2A'}|=\left|\frac{1}{N^2}\sum_i(G_1)_{ii}(G_1AG_2A')_{ii}\right|\prec\frac{\sqrt{\rho_1\rho_2}}{N\sqrt{\eta_1\eta_2}},
    \end{equation}
    where we used that \(|(G_1)_{ii}|\prec 1\) and that \(|(G_1AG_2A')_{ii}|\prec \sqrt{\rho_1\rho_2/(\eta_1\eta_2)}\) by a Schwarz inequality. The bound for \(|\braket{\diag(G_1AG_2)G_2A'}|\) is completely analogous and so omitted.
    
    Combining~\eqref{eq:12ggiaa} with~\eqref{eq:bda1}--\eqref{eq:bda2}, and using that
    \[
    |\braket{G_2A'}|\prec \frac{ \sqrt{\rho_2}\Lambda_+}{N\sqrt{\eta_2}}, \qquad |\braket{G_2G_1A}|\prec \frac{ \rho^*\Lambda_+}{\sqrt{NK}\eta_*},
    \]
    by~\eqref{eq:singGA} and~\eqref{eq:boundimGGA}, respectively, we conclude that
    \begin{equation}\label{eq:fingaga}
      \begin{split}
        \braket{G_1AG_2A'}&=m_1m_2\braket{AA'}-m_1\braket{\underline{WG_1AG_2}A'}+\mathcal{O}_\prec\left(\frac{\rho^*\Lambda_+^2}{K^2}\right)\\
        &=m_1m_2\braket{AA'}+\mathcal{O}_\prec\left(\frac{\rho^*\Lambda_+^2}{\sqrt{K}}\right).
      \end{split} 
    \end{equation}
    To go from the first to the second line of~\eqref{eq:fingaga} we used that \(|\braket{\underline{WG_1AG_2}A'}|\prec  \Lambda_+^2\rho^*K^{-1/2}\) by~\eqref{eq:under}. This concludes the proof of the local law for \(\braket{G_1AG_2A'}\).
  \end{proof}
  
  In order to conclude the proof of Proposition~\ref{pro:lambll} we are only left with the averaged local laws for \(\Im G_1 A \Im G_2 A'\) and \(\Im G_1A \Im G_2^t A'\) in~\eqref{eq:llimGAimGAtranspose} and for \(\Im G_1^t\Im G_2\) in~\eqref{eq:lltrantran}.

  \begin{proof}[Proof of the local laws in~\eqref{eq:llimGAimGAtranspose}]
    
    We present only the proof of the local law for \(\braket{\Im G_1 A \Im G_2 A'}\), the proof for \(\braket{\Im G_1 A \Im G_2^t A'}\) is identical and so omitted. We start with the formula analogous to~\eqref{eq:2G} but with \(\Im G\)'s instead of \(G\)'s, generating altogether twelve terms with a \(1/N\) pre-factor.
    Ten of them can be estimated by \(\Lambda_+^2\rho_1\rho_2L^{-1}\) 
    exactly as in~\eqref{eq:bda1}--\eqref{eq:bda2}  by writing out \(2\ii \Im G_i=G_i-G_i^*\). Note that whenever  the analogue of~\eqref{eq:bda1} is used, but with  \(G_1^{(t)}G_2\) instead of \(G_1^{(t)}G_1\), we could gain the necessary
    factor \(\sqrt{\rho_1\rho_2/(\eta_1\eta_2)}\) instead of only \(\rho_1/\eta_1\)
    in the first Schwarz inequality in~\eqref{eq:bda1}. Keeping  the two special \(1/N\) terms, this gives the 
    expansion
    \begin{equation}\label{eq:bigeq}
      \begin{split} 
        &\braket{\Im G_1 A\Im G_2 A'}+\landauOprec*{\frac{\Lambda_+^2\rho_1\rho_2}{L}}\\
        &=\Im m_ 1\Im m_2 \braket{AA'}+\Im m_1 \braket{\Im (G_2-m_2) A'A}+\overline{m_1}\ov{\braket{G_1-m_1}}\braket{\Im G_1 A\Im G_2 A'} \\
        &\quad+\Im[m_1\braket{G_1-m_1}]\braket{G_1 A\Im G_2 A'} -\Im m_1 \braket{\underline{WG_1A\Im G_2} A'} -\overline{m_1}\braket{\underline{W\Im G_1A\Im G_2} A'}\\
        &\quad+\Im m_1\braket{G_1AG_2}\braket{\Im G_2A'}+\Im m_1\braket{G_1A\Im G_2}\braket{G_2^*A'}+\overline{m_1}\braket{\Im G_1 AG_2}\braket{\Im G_2A'}  \\
        &\quad+\overline{m_1}\braket{\Im G_1A\Im G_2}\braket{G_2^*A'} +\frac{\sigma\Im m_1}{N}\braket{G_1A\Im G_2A' G_1^t}+\frac{\sigma \overline{m_1}}{N}\braket{\Im[G_1^t G_1]A\Im G_2 A'}.
      \end{split}
    \end{equation}
    The two remaining \(1/N\) terms,
    where  \(\Im G_2\) is separated from \(G_1\) by \(A\)'s, are estimated as follows:
    \begin{equation}\label{eq:newin2}
      |\braket{G_1A\Im G_2A' G_1^t}|\le \braket{G_1A\Im G_2A'G_1^*}^{1/2}\braket{(G_1^t)^*A\Im G_2A'G_1^t}^{1/2}  \prec \frac{N\rho_1\rho_2\Lambda_+^2}{L},
    \end{equation}
    where in the last inequality we used the Ward identity and Lemma~\ref{degree two lemma} below. Inserting~\eqref{eq:newin2}, the local law \(|\braket{G_2-m_2}|\prec (N\eta_2)^{-1}\) and~\eqref{eq:singGA}--\eqref{eq:boundimGGA} into~\eqref{eq:bigeq} we conclude that
    \begin{equation}\label{eq:chsonn}
      \begin{split}
        \braket{\Im G_1 A\Im G_2 A'}&=\Im m_1 \Im m_2 \braket{AA'}-\Im m_1 \braket{\underline{WG_1A\Im G_2} A'} \\
        &\quad -\overline{m_1}\braket{\underline{W\Im G_1 A\Im G_2} A'}+\mathcal{O}_\prec\left(\frac{\Lambda_+^2\rho_1\rho_2}{L}\right).
      \end{split}
    \end{equation}
    Finally, combining~\eqref{eq:chsonn} with the bound for \(\braket{\underline{WG_1A\Im G_2} A'}\) and \(\braket{\underline{W\Im G_1 A\Im G_2} A'}\) in~\eqref{eq:under}, we conclude that
    \begin{equation}
      \braket{\Im G_1 A\Im G_2 A'}=\Im m_1 \Im m_2 \braket{AA'}+\left(\frac{\Lambda_+^2\rho_1\rho_2}{\sqrt{L}}\right).
    \end{equation}
    
  \end{proof}
  
  \begin{proof}[Proof of the local law for  \(\braket{\Im G_1\Im G_2^t}\) in~\eqref{eq:lltrantran}]
    We closely follow the proof of \(\braket{\Im G_1A\Im G_2 A'}\), hence we only explain the differences. As each traceless \(A\), \(A'\) between two resolvents gave rise to a factor \(\Lambda_+\) in the proof of \(\braket{\Im G_1A\Im G_2 A'}\), here the fact that a resolvent is followed by its transpose
    gives rise to a factor \(\Pi_+\). Keeping this modification in mind, in the basic equation for \(\braket{\Im G_1\Im G_2^t}\) we can again estimate all the \(1/N\) terms as in~\eqref{eq:bda1}--\eqref{eq:bda2} and~\eqref{eq:newin2} by \((1+\Pi^2)\rho_1\rho_2L^{-1}\). Then, using the local law \(|\braket{G_i-m_i}|\prec (N\eta_i)^{-1}\), similarly to~\eqref{eq:bigeq}, we conclude that 
    \begin{equation}\label{eq:transpimim}
      \begin{split}
        \braket{\Im G_1 \Im G_2^t}&=\Im m_1 \Im m_2-\Im m _1 \braket{\underline{WG_1\Im G_2^t}}-\overline{m_1}\braket{\underline{W\Im G_1\Im G_2^t}} \\
        &\quad+\sigma \Im m_1 \braket{G_1G_2^t}\braket{\Im G_2}+\sigma \Im m_1\braket{G_1\Im G_2^t}\braket{G_2^*}+\sigma \overline{m_1}\braket{\Im G_1 G_2^t}\braket{\Im G_2} \\
        &\quad+\sigma \overline{m_1}\braket{\Im G_1\Im G_2^t}\braket{G_2^*}+\mathcal{O}_\prec\left(\frac{\Pi_+^2\rho_1\rho_2}{L}\right),
      \end{split}
    \end{equation}
    where we used $\Pi_+:=1+\Pi$. Note that several ``large'' terms remained in~\eqref{eq:transpimim} in contrast to~\eqref{eq:chsonn} since the analogues of \(\braket{\Im G_2A}\) and  \(\braket{G_2^*A}\)  in~\eqref{eq:bigeq} are now  not small. Then using the bounds in~\eqref{eq:under} for the underlined terms in~\eqref{eq:transpimim}, and the local laws
    \begin{equation}\label{eq:needlltransp}
      \begin{split}
        \braket{G_1G_2^t}&=\frac{m_i\Im m_j}{1-\sigma m_1m_2}+\mathcal{O}_\prec\left(\frac{\rho^*\Pi_+^2}{\sqrt{K}}\right), \\
        \braket{G_i\Im G_j^t}&=\frac{m_i\Im m_j}{(1-\sigma m_i m_j)(1-\sigma m_i\overline{m_j})}+\mathcal{O}_\prec\left(\frac{\rho_j\Pi_+^2}{\sqrt{K}}\right),
      \end{split}
    \end{equation}
    we conclude
    \begin{equation}\label{eq:finpap}
      \braket{\Im G_1 \Im G_2^t}=\frac{\Im m_1\Im m_2(1-\sigma^2|m_1m_2|^2)}{|1-\sigma \overline{m_1}m_2|^2(1-\sigma m_1m_2)}+\sigma \overline{m_1}\braket{\Im G_1\Im G_2^t}\braket{G_2^*}+\mathcal{O}_\prec\left(\frac{\Pi_+^2\rho_1\rho_2}{\sqrt{L}}\right).
    \end{equation}
    We remark that the second local law in~\eqref{eq:needlltransp} follows analogously to~\eqref{eq:g1g2t}.
    Finally, writing \(\braket{G_2^*}\) in the last term in the rhs.\ of~\eqref{eq:finpap} as \(\braket{G_2^*}=\overline{m_2}+\braket{(G_2-m_2)^*}\), we conclude~\eqref{eq:lltrantran}.
  \end{proof}
  This concludes the proof of Proposition~\ref{pro:lambll}.
\end{proof}

\section{Feynman diagrams: Proof of Theorem~\ref{chain G underline theorem}}\label{sec cum exp}

For the sake of simpler notations we abbreviate 
\begin{equation}\label{eta K def}
  \eta:=\eta_\ast=\min_k\eta_k,\quad \rho:=\rho^\ast =\max_k\rho_k, \quad K:= N\eta_\ast\rho^\ast\ge L = N \min_k(\eta_k\rho_k)
\end{equation} 
and within this section write \(\rho^i\) and \(\Lambda_+^{a}\) with \(i:=\abs{\mathfrak i},a=\abs{\mathfrak a}\) and \(\Lambda_+:=\max_{k\in\mathfrak a}\Lambda_+^{B_k}\) rather than carrying the products \(\prod_{k\in\mathfrak i}\rho_i\) and \(\prod_{k\in\mathfrak a}\Lambda_+^{B_k}\). Within the formal proof of Theorem~\ref{chain G underline theorem} we argue, however, that the proof naturally yields the latter. In order to present the main body of the proof of Theorem~\ref{chain G underline theorem} more concisely we first make four simplifying assumptions: 
\begin{enumerate}[label=(A-\roman*)]
  \item\label{assump w2} we assume that \(w_2=1+\sigma\),
  \item\label{assump eta} we consider the regime \(\eta\lesssim1\),
  \item\label{assump l a t} for the averaged case we assume that \(l\in\mathfrak a\cup\mathfrak t\) whenever \(\abs{\mathfrak a\cup\mathfrak t}\ne 0\),
  \item\label{assump j iso} in the isotropic bound we only consider \(j\ge 1\).
\end{enumerate}
In Appendix~\ref{sec gen} we address the necessary changes to remove each of these four simplifying assumptions.

\subsection{Graphical representation of the cumulant expansion}
Using multiple cumulant expansions we expand the high moments
\[\E \abs{\braket{\un{WG_1B_1G_2B_2\cdots G_l B_l}}}^{2p}\quad \text{and} \quad\abs{\braket{\vx,\un{G_1 B_1\cdots G_{j} B_j W G_{j+1}B_{j+1}\cdots B_{l-1}G_l }\vy}}^{2p}\] 
as a polynomial of resolvent entries for any \(p\in\N\). More precisely, we iteratively use the expansion
\begin{equation}\label{eq cum exp}
  \begin{split}
    \E w_{ab} f(W) &= \sum_{k=1}^R \sum_{\bm\alpha\in\set{ab,ba}^k}\frac{\kappa(ab,\bm\alpha)}{k!} \E \partial_{\bm\alpha} f(W) + \Omega_R 
  \end{split}
\end{equation}
with some explicit error term \(\Omega_R\) (see e.g.~\cite[Proposition 3.2]{MR3941370}) which for our application can easily be seen to be \(\landauO{N^{-2p}}\) if \(R=12p\). Here for a \(k\)-tuple of double indices \(\bm\alpha=(\alpha_1,\ldots,\alpha_k)\) we use the short-hand notation \(\kappa(ab,(\alpha_1,\ldots,\alpha_k)):=\kappa(w_{ab},w_{\alpha_1},\ldots,w_{\alpha_k})\)
for the joint cumulant of \(w_{ab},w_{\alpha_1},\ldots,w_{\alpha_k}\) and set \(\partial_{\bm\alpha}:=\partial_{w_{\alpha_1}}\cdots\partial_{w_{\alpha_k}}\), \(\partial_{ab}:=\partial_{w_{ab}}\). 
We wish to express the cumulant factors in~\eqref{eq cum exp} as a matrix with \(a,b\) matrix elements. To encode the fact the that cumulants have slightly different combinatorics for \(a=b\) and \(a\ne b\), we rewrite~\eqref{eq cum exp} as
\begin{equation}
  \begin{split}
    \E w_{ab} f(W) &= \sum_{k=1}^R \biggl(\bm1(a=b)\frac{\kappa(\set{aa}^{k+1})}{k!} \E \partial_{aa}^k f(W) \\
    &\qquad\qquad + \bm1(a\ne b) \sum_{q+q'=k}\binom{k}{q}\frac{\kappa(\set{ab}^{q+1},\set{ba}^{q'})}{k!}\E \partial_{ab}^p \partial_{ba}^{q'} f(W)\biggr)+ \Omega_R,
  \end{split}
\end{equation}
where we used that cumulants are invariant under reordering their entries, and thus \(\kappa(ab,\bm\alpha)\) can be expressed as the cumulant of \(q+1\) copies \(\set{ab}^{q+1}\) of \(ab\) and \(q'\) copies \(\set{ba}^{q'}\) of \(ba\). In order to simplify notations we introduce the matrices \(\kappa^{q+1,q'}\) for integers \(q,q'\ge 0\) with \(q+q'\ge 1\) with matrix elements
\begin{align}\label{kappa mat def0}
  \kappa^{1,1}_{ab} &:= 1,\qquad \kappa^{2,0}_{ab} := \sigma, \\\nonumber
  \frac{\kappa^{q+1,q'}_{ab}}{N^{(k+1)/2}} &:=  \bm1(a=b)  \frac{\kappa(\set{aa}^{k+1})}{(k+1)k!} + 
  \bm1(a\ne b)  \binom{k}{q} \frac{\kappa(\set{ab}^{q+1},\set{ba}^{q'})}{k!},  \quad k=q+q'\ge 2,
\end{align}
so that~\eqref{eq cum exp} can be rewritten as 
\begin{equation}\label{eq cum exp2}
  \begin{split}
    \E w_{ab} f(W) &= \sum_{k=1}^R  \sum_{q+q'=k} \frac{\kappa^{q+1,q'}_{ab}}{N^{(k+1)/2}} \E \partial_{ab}^q \partial_{ba}^{q'} f(W) + \Omega_R\\
    &=  \E\frac{\partial_{ba} f(W) + \sigma \partial_{ab} f(W)}{N}+\sum_{k=2}^R\sum_{q+q'=k} \frac{\kappa^{q+1,q'}_{ab}}{N^{(k+1)/2}} \E \partial_{ab}^q \partial_{ba}^{q'} f(W) + \Omega_R,
  \end{split}
\end{equation}
where we used that due to~\ref{assump w2} we have that \(\kappa(\set{\sqrt{N}w_{aa}}^2)=w_2 =1+\sigma=\kappa^{1,1}_{aa}+\kappa^{2,0}_{aa}\). 

We begin with some examples before describing the general structure of the expansion. We consider the case \(p=1\) and \(l=2\) and perform a cumulant expansion 
\begin{equation}\label{WGAImGA 1st exp}
  \begin{split}
    &\E\abs{\braket{\un{WGA \Im G A}}}^2 \\
    &=\E\braket{\un{WGA\Im G A}}\braket{\un{A \Im G A G^\ast W}}\\
    &=  N^{-1}\sum_{ab} \E\Bigl(\braket{\Delta^{ab}GA\Im G A} \partial_{ba} \braket{\un{A\Im G A G^\ast W}}  + \sigma\braket{\Delta^{ab}GA\Im G A} \partial_{ab} \braket{\un{A\Im G A G^\ast W}} \Bigr)\\
    &\qquad\qquad + \sum_{k=2}^R \sum_{q+q'=k}\frac{\kappa^{q+1,q'}_{ab}}{N^{(k+1)/2}} \E\partial_{ab}^q \partial_{ba}^{q'}\Bigl[\braket{\Delta^{ab}GA\Im G A} \braket{\un{A\Im G A G^\ast W}}\Bigr],
  \end{split}
\end{equation}
where \((\Delta^{ab})_{cd}=\delta_{ac}\delta_{bd}\). In order to compute the derivative of \(\Im G\) we write 
\[ \partial_{ba} \Im G = \eta\partial_{ba} G G^\ast = - \eta (G\Delta^{ba} GG^\ast+GG^\ast \Delta^{ba}G^\ast) = - (G\Delta^{ba} \Im G + \Im G\Delta^{ba} G^\ast).\]
By distributing the derivatives according to Leibniz' rule we can write~\eqref{WGAImGA 1st exp} as 
\begin{equation}\label{WGAImGA 2nd exp}
  \begin{split}
    & \E\sum_{ab}\frac{\kappa^{1,1}_{ab}}{N} \braket{\Delta^{ab}GA\Im G A} \braket[\Big]{A\Im G A G^\ast\Delta^{ba}-\un{A(G\Delta^{ba}\Im G + \Im G\Delta^{ba} G^\ast) A G^\ast W}}\\
    &+\E\sum_{ab}\frac{\kappa^{2,0}_{ab}}{N} \braket{\Delta^{ab}GA\Im G A} \braket[\Big]{A\Im G A G^\ast\Delta^{ab}-\un{A(G\Delta^{ab}\Im G + \Im G\Delta^{ab} G^\ast) A G^\ast W}}\\
    & - \E\sum_{ab} \frac{\kappa^{2,1}_{ab}}{N^{3/2}} \braket{\Delta^{ab}G\Delta^{ba}GA\Im G A} \braket[\Big]{A\Im G A G^\ast \Delta^{ab}-\un{A\Im G A G^\ast \Delta^{ab}G^\ast W}} +\cdots
  \end{split}
\end{equation}
where we chose two representative terms for \(k=1\) and \(k=2\) each. By performing another cumulant expansion for the remaining underlined terms in~\eqref{WGAImGA 2nd exp} we obtain 
\begin{equation}\label{WGAImGA 3rd exp}
  \begin{split}
    &N^2\E\abs{\braket{\un{WGA\Im G A}}}^2 \\
    &= \E\sum_{ab}\frac{\kappa^{1,1}_{ab}}{N} (GA\Im G A)_{ba}\biggl[(A\Im G AG^\ast)_{ab} + \sigma(A\Im G AG^\ast)_{ba}\biggr]\\
    &\quad -\E\sum_{ab} \frac{\kappa^{2,1}_{ab}}{N^{3/2}} G_{bb} (GA\Im G A)_{aa}(A\Im G AG^\ast)_{ab}\\
    &\quad +\E\sum_{abcd} \frac{\kappa^{1,1}_{ab}}{N}\frac{\kappa^{1,1}_{cd}}{N}G_{bd}(GA\Im GA)_{ca} \Bigl[(A\Im G)_{db}(G^\ast A G^\ast)_{ac} +(A\Im G)_{da}(G^\ast A G^\ast)_{bc} \Bigr]\\
    &\quad -\E\sum_{abcd} \frac{\kappa^{2,1}_{ab}}{N^{3/2}}\frac{\kappa^{1,1}_{ab}}{N} G_{bb}(GA\Im G )_{ad}(G^\ast A)_{ca} (A\Im G A G^\ast)_{da} G^\ast_{bc}\\
    &\quad +\E\sum_{abcd}\frac{\kappa^{2,1}_{ab}}{N^{3/2}}\frac{\kappa^{2,1}_{cd}}{N^{3/2}} G_{bb}G_{ac}G_{dd}(GA\Im G A)_{ca} (A\Im G A G^\ast)_{da} G^\ast_{bc} + \cdots,
  \end{split}
\end{equation}
where we again selected representative terms. We notice that the rhs.\ can be written as a polynomial in the entries of two types of matrices; the \(\kappa\)-matrices representing cumulants like \(\kappa^{2,1}\), and the \(G\)-matrices representing resolvents like \(\Im G\) or \(G^\ast\), or their multiples with \(A\) like \(A\Im G\), \(G^\ast A\). In order to achieve this representation we introduce additional internal summation indices to expand longer products e.g.\ we write \((A\Im G A G^\ast)_{da}=\sum_e (A\Im G)_{de}(AG^\ast)_{ea}\). The \emph{value} of any given graph is the numerical result of summing up all indices. The precise definition will be given later in~\eqref{val def}; here, as an example, the first term in~\eqref{WGAImGA 3rd exp} with indicated summation indices reads 
\[\sum_{ab}\sum_{ij} \frac{1}{N} \E(GA)_{bi}(\Im G A)_{ia}(A\Im G)_{aj} (AG^\ast)_{jb}= \E\Val\left(\sGraph{ a[label=left:\(a\)] --[dashed] b[label=below:\(b\)]; b -- i1[fn,label=right:\(i\)] -- a; a -- i2[fn,label=above:\(j\)] -- b; }\right),\]
where the (directed) edges represent matrices and the vertices represent summation indices. The edge orientation indicates the order of indices of the represented matrix which for the \(G\)-edges is uniquely determined from the expansion, while for \(\kappa\)-edges it may be chosen arbitrarily, as long as the represented matrix is defined consistently with the orientation, see~\eqref{kappa mat def} later. Here we drew the internal vertices as empty, and the \emph{\(\kappa\)-vertices} as filled nodes, the \(\kappa\)-matrices as dashed, and the \(G\)-matrices as solid edges. Both internal- and \(\kappa\)-vertices correspond to independent summations over the index set \([N]\).  

Thus, graphically we can represent~\eqref{WGAImGA 3rd exp}  as 
\begin{equation}\label{WGAImGA 3rd exp graph}
  \begin{split}
    &N^2\E\abs{\braket{\un{WGA\Im G A}}}^2\\
    & =\E\Biggr[\Val\left(\sGraph{ a --[dashed] b; b -- i1[fn] -- a; a -- i2[fn] -- b; }\right) +\Val\left(\sGraph{ a --[dashed] b; b -- i1[fn] -- a; b -- i2[fn] -- a; }\right) - 
    \Val\left(\sGraph{ a --[dashed] b; b --[glb] b; a --[bl] i1[fn] --[bl] a; a -- i2[fn] --b; }\right) +
    \Val\left(\sGraph{ a --[dashed] b; c --[dashed] d; b--[bl]d; c--i1[fn]--a; d--[bl]b;a--i2[fn]--c; }\right) \\
    &\quad - \Val\left(\sGraph{ a --[dashed] b; d; c; c --[dashed] d; i2[fn]; i1[fn]; b--d; c--i1--a; d--a;b--i2--c; }\right) -
    \Val\left(\sGraph{ a --[dashed] b; c --[dashed] d; b--[glb]b; a--i1[fn]--d; c--a; d--i2[fn]--a; b--c;}\right) +
    \Val\left(\sGraph{ a --[dashed] b; c --[dashed] d; b--[glb]b; a--c; d--[glr]d; c--i1[fn]--a; d--i2[fn] -- a; b--c;}\right)+\cdots\Biggr].
  \end{split}
\end{equation}
Note that the dashed edges connect only filled nodes and they form a perfect matching. The number of \(G\)-edges adjacent to each filled vertex is equal to the order of the corresponding cumulant expansion.

Similarly, for the isotropic case we obtain, for example the polynomials
\begin{equation}\label{iso expansion example}
  \begin{split}
    &\E \abs{\braket{\vx,\un{GA W G}\vy}}^2  = \E \braket{\vx,\un{GA W G}\vy} \braket{\vy,\un{G^\ast W AG^\ast}\vx}\\ 
    &= \E\sum_{ab} \frac{\kappa^{1,1}_{ab}}{N}(GA)_{\vx a}G_{b\vy} G^\ast_{\vy b}(AG^\ast)_{a\vx} \\
    &\quad+  \E\sum_{abcd}\frac{\kappa^{1,1}_{ab}}{N}\frac{\kappa^{1,1}_{cd}}{N} (GA)_{\vx a}G_{bd}G_{c\vy} \Bigl[G^\ast_{\vy b}G^\ast_{ac}(AG^\ast)_{d\vx} + G^\ast_{\vy c}(AG^\ast)_{db} G^\ast_{a\vx} \Bigr]\\
    &\quad - \E\sum_{abcd} \frac{\kappa^{2,1}_{ab}}{N^{3/2}}\frac{\kappa^{1,1}_{cd}}{N} (GA)_{\vx a} G_{bb} G_{ad} G_{c\vy} G^\ast_{\vy a} G^\ast_{bc} (AG^\ast)_{d\vx} + \cdots
  \end{split}
\end{equation}
which we represent graphically as 
\begin{equation}\label{iso expansion example graph}
  \begin{split}
    \E \abs{\braket{\vx,\un{GA W G}\vy}}^2 &
    = \E\Val\left(\sGraph{ x1[rectangle,fill=none,label=above:\(\vx\)] -- a[label=below:\(a\)] --[dashed] b[label=below:\(b\)] -- y1[rectangle,fill=none,label=above:\(\vy\)]; y2[rectangle,fill=none,label=above:\(\vy\)] -- b;  a -- x2[rectangle,fill=none,label=above:\(\vx\)]; }\right)
    + \E\Val\left(\sGraph{ x1[rectangle,fill=none]; y1[rectangle,fill=none]; c; d; x2[rectangle,fill=none]; y2[rectangle,fill=none]; b; a; a --[dashed] b;  x1[rectangle,fill=none]  -- a ; y2 -- b; c --[dashed] d; c -- y1[rectangle,fill=none];  d -- x2[rectangle,fill=none];  b -- d;  a -- c;  }\right) \\&\quad  
    + \E\Val\left(\sGraph{ x1[rectangle,fill=none]  -- a --[dashed] b --[bl] d --[dashed] c -- y1[rectangle,fill=none]; y2[rectangle,fill=none] -- c;  d --[bl] b; a -- x2[rectangle,fill=none]; }\right) 
    - \E\Val\left(\sGraph{ y2[rectangle,fill=none]; x1[rectangle,fill=none]; a; b; y1[rectangle,fill=none]; c; d; c; x2[rectangle,fill=none]; a --[dashed] b; c --[dashed] d; x1[rectangle,fill=none]  -- a; b --[glr] b; a -- d; c -- y1[rectangle,fill=none]; y2 -- a;  d -- x2[rectangle,fill=none]; b--c;}\right)+\cdots,
  \end{split}
\end{equation}
with external vertices drawn as squares. Note that the vectors \(\vx\) and \(\vy\) are naturally represented by \emph{external vertices} that are drawn as solid squares.

After these examples we now explain the general structure of the graphs and give a precise definition of graphs and their \emph{graph value} used in~\eqref{WGAImGA 3rd exp graph} and~\eqref{iso expansion example graph}. 
\begin{definition}\label{def graphs}
  We define the class \(\cG\) of oriented graphs used within this paper by the following requirements. Each \(\Gamma=(V,E)\in\cG\) has three types of vertices, \(\kappa\)-vertices \(V_\kappa\), \emph{internal} vertices \(V_\mathrm{i}\) and external vertices \(V_\mathrm{e}\), so that \(V=V_\kappa\dot\cup V_\mathrm{i}\dot\cup V_\mathrm{e}\), and two types of edges, \emph{\(\kappa\)-edges} \(E_\kappa\) and \emph{\(G\)-edges} \(E_g\), so that \(E=E_\kappa\dot\cup E_g\). 
  For each vertex \(v\in V\) we define its \(G\)-in- and out-degree \(d_g^\mathrm{in}(v),d_g^\mathrm{out}(v)\) as the number of incoming and outgoing \(G\)-edges. 
  The total degree \(d_g(v)\) is defined as the sum \(d_g(v):=d_g^\mathrm{in}(v)+d_g^\mathrm{out}(v)\) of in- and out-degrees and the three vertex classes satisfy 
  \begin{equation}\label{eq dg}
    d_g(v)= \begin{cases}
      1,& v\in V_\mathrm{e}\\
      2, & v\in V_\mathrm{i},
    \end{cases},\qquad d_g(v)\ge 2, \quad v\in V_\kappa.
  \end{equation} 
  We can partition \(V_\kappa = \bigcup_{k\ge 2} V_\kappa^k\) with \(V_\kappa^k:=\set{v\in V_\kappa\given d_g(v)=k}\).
  
  Within the graphs \(\Gamma\) each external vertex \(v\in V_\mathrm{e}\) carries some \(\vx(v)\in\C^N\) as a vector-valued label recording which vector the vertex represents. Each \(\kappa\)-edge \(e\in E_\kappa\) carries two integer-valued labels \(r(e)\ge 1,s(e)\ge 0\) recording the cumulant type. Each \(G\)-edge \(e\in E_g\) carries six labels. The binary labels \(i(e),t(e),\ast(e)\in\set{0,1}\) indicate whether \(e\) represents the imaginary part, the transpose and/or the adjoint of a resolvent. The scalar label \(z(e)\) records the spectral parameter of the resolvent and the matrix-valued labels \(L(e),R(e)\) record deterministic matrices which are multiplied with the resolvent from the left/right. 
\end{definition}

We now relate the graphs to the polynomials they represent. Each internal vertex or \(\kappa\)-vertex \(v\) corresponds to an independent summation \(a_v\in[N]\). 
In order to unify notations we define a \emph{labelling map} 
\begin{equation}
  \begin{split}
    \vx\colon V &\to \C^N,\qquad 
    v\mapsto \vx_v := \begin{cases}
      \vx(e),& v\in V_\mathrm{e},\\
      \bm e_{a_v},& v\in V_\mathrm{i}\cup V_\kappa,
    \end{cases}
  \end{split}
\end{equation} 
where \(\bm e_a\) is the \(a\)-th unit vector in the standard basis, and for \(v\in V_\mathrm{e}\), the vector \(\vx(v)\) is the label of \(v\) from Definition~\ref{def graphs}. The \(G\)-edges \(e\in E_g\) represent resolvents defined via the labels of \(e\) from Definition~\ref{def graphs}. We define the matrix \(\cG^{e}\) as the resolvent \(G(z(e))\) modified according to \(i(e),t(e),\ast(e)\) and multiplied by \(L(e),R(e)\) from the left/right. As an example, we set 
\begin{equation*}
  \cG^e = B(\Im G(z))^t \quad\text{for \(e \in E_g\) with}\quad \Bigl(i(e),t(e),\ast(e),z(e),L(e),R(e)\Bigr)=(1,1,0,z,B,I).
\end{equation*}
We remark that for all \(G\)-edges \(e\) considered in this paper at most one of the matrices \(L(e),R(e)\) is different from the identity matrix \(I\). The \(\kappa\)-edges \(e\in E_\kappa\) represent \(N\times N\) cumulant matrices \(\kappa^e\) which are determined by the two integers \(r(e),s(e)\) from Definition~\ref{def graphs} such that for \(a\ne b\),
\begin{equation}\label{kappa mat def}
  \kappa_{ab}^{(uv)} := \kappa^{r((uv)),s((uv))}_{ab},
\end{equation}
where on the rhs.\ \(\kappa\) was defined in~\eqref{kappa mat def0}. We note that \(\abs{\kappa_{ab}^{(uv)}}\lesssim1\) by~\eqref{eq:momentass}. Finally, we define the graph value 
\begin{equation}\label{val def} 
  \begin{split}
    \Val(\Gamma) := \sum_{\substack{a_v\in[N]\\v\in V_\mathrm{i}\cup V_\kappa}} \biggl[\prod_{(uv)\in E_\kappa} \biggl( N^{-d_g(u)/2}\kappa^{(uv)}_{a_u a_v} \biggr)\biggr] \biggl(\smashoperator[r]{\prod_{(uv)\in E_g}} \cG^{(uv)}_{\vx_u \vx_v}\biggr).
  \end{split}
\end{equation}

Among the degree-\(2\) vertices the ones between edges representing matrices whose eigenvectors are asymptotically orthogonal are of particular importance. There 
are two different mechanism for such orthogonality;  (a) two resolvents, one with and one without transpose stand  next to each other, e.g.\ \(GG^t\) or \(G^\ast (A(\Im G)^t)\), (b) a traceless matrix \(A\) stands between two resolvents, e.g.\ \((GA)G^\ast\) or \(G(A(\Im G)^t)\). Note that in some cases, e.g.\ \( (GA)(\Im G)^t\), both mechanism can be present simultaneously, and hence a vertex can be \(0\mathrm{tr}\)- and \(t\)-vertex at the same time.
\begin{definition}[Orthogonality vertices]\leavevmode%
  \begin{enumerate}[label=(\alph*)]
    \item A vertex \(v\in V_\kappa^2\cup V_\mathrm{i}\) is called a \emph{\(t\)-orthogonality vertex}, or short \emph{\(t\)-vertex} if the two unique \(G\)-edges \(e_1,e_2\in E_g\) adjacent to \(v\) satisfy \((t(e_1),t(e_2))\in\set{(0,1),(1,0)}\), i.e.\ if exactly one of the two \(G\)-edges adjacent to \(v\) is transposed. 
    \item A vertex \(v\in V_\mathrm{i}\cup V_\kappa^2\) is called an \emph{zero-trace-orthogonality vertex}, or short \emph{\(0\mathrm{tr}\)-vertex} if exactly one of the two edges adjacent to \(v\) represents a resolvent (which is allowed to be the imaginary part, transposed, or adjoint) multiplied by a traceless matrix on the side of \(v\), while the other adjacent edge represents a resolvent matrix multiplied by the identity matrix on the side of \(v\). More precisely, using the labels \(L(e),R(e)\) of
    the edges, \(v\) is defined to be an \(0\mathrm{tr}\)-vertex if one of the following three conditions is satisfied:
    \begin{enumerate}[label=(\alph{enumi}.\roman*)]
      \item there are incoming/outgoing edges \((uv),(vw)\in E_g\) such that either \(\braket{R((uv))}=0,\allowbreak L((vw))=I\) or \(\braket{L((vw))}=0,R((uv))=I\), 
      \item there are two outgoing edges \((vu),(vw)\in E_g\) such that either \(\braket{L((vu))}=0,L((vw))=I\) or \(\braket{L((vw))}=0,L((vu))=I\), 
      \item there are two incoming edges \((uv),(wv)\in E_g\) such that either \(\braket{R((uv))}=0,R((wv))=I\) or \(\braket{R((wv))}=0,R((uv))=I\).
    \end{enumerate}
  \end{enumerate}
\end{definition}

\begin{proposition}[Cumulant expansion]\label{cumulant expansion prop}
  Let \(\mathfrak a,\mathfrak t,\mathfrak i\) be fixed sets as in Theorem~\ref{chain G underline theorem} of sizes \(a:=\abs{\mathfrak a}, t:=\abs{\mathfrak t}, i:=\abs{\mathfrak i}\). 
  Then for any \(p\in\N\) there exists a finite (\(N\)-independent) family of graphs \(\cG_p=\cG_p^\mathrm{av}\cup\cG_p^\mathrm{iso}\subset \cG\) such that 
  \begin{align}\label{eq cum expansion formula}
    \E\abs{\Tr \un{WG_1B_1G_2B_2\cdots G_l B_l}}^{2p}&=\sum_{\Gamma\in\cG_p^\mathrm{av}}\E\Val(\Gamma) + \landauO{N^{-2p}},\\\label{eq cum expansion formula iso}
    \E\abs{\braket{\vx,\un{G_1 B_1\cdots G_{j} B_j W G_{j+1}B_{j+1}\cdots B_{l-1}G_l }\vy}}^{2p}&=\sum_{\Gamma\in\cG_p^\mathrm{iso}}\E\Val(\Gamma) + \landauO{N^{-2p}},
  \end{align}
  and for each graph \(\Gamma\) we may select two disjoint subsets \(V_\mathrm{o}^t\dot\cup V_\mathrm{o}^{0\mathrm{tr}}=:V_\mathrm{o}\) of \(t\)- and \(0\mathrm{tr}\)-vertices, respectively, such that the following properties are satisfied: 
  \begin{genprop}
    \item\label{perfect matching} The graph \((V_\kappa,E_\kappa)\) is a perfect matching, in particular, \(\abs{V_\kappa}=2\abs{E_\kappa}\).
    \item\label{number of kappa edges} The number of \(\kappa\)-edges satisfies \(1\le\abs{E_\kappa}\le 2p\).
    \item\label{number of G edges} The number of \(G\)-edges satisfies 
    \begin{subequations}
      \begin{align}
        \abs{\set*{e\in E_g\given i(e)=1}}&=2ip\label{imG count}\\
        \abs{E_g} &= \sum_{e\in E_\kappa} d_g(e)+2(l-1)p\ge 2p.\label{eg total count}
      \end{align}
    \end{subequations}
    \item\label{degree of kappa vertices} For \((uv)\in E_\kappa\) the \(G\)-degrees of \(u,v\in V_\kappa\) satisfy \(d_g^\mathrm{in}(u)=d_g^\mathrm{out}(v)\), \(d_g^\mathrm{in}(v)=d_g^\mathrm{out}(u)\) and \(d_g(u)=d_g(v)\ge 2\). Therefore we may define the \(G\)-degree of \((uv)\) as \(d_g((uv)):=d_g(u)=d_g(v)\) and partition \(E_\kappa=\dot\bigcup_{k\ge 2} E_\kappa^k\) into \(E_\kappa^k := \set{e\in E_\kappa\given d_g(e)=k}\). 
    \item\label{no loops} Every \(E_g\)-cycle on \(V_\kappa^2\cup V_\mathrm{i}\) must contain at least two \(V_\kappa^2\)-vertices, and in particular there cannot exist isolated loop edges, and there are at most \(\abs{E_\kappa^2}\) cycles. 
    \item\label{Va Vcyc sum claim} Denoting the number of isolated cycles in \((V_\kappa\cup V_\mathrm{i},E_g)\) with \(k\) vertices in \(V_\mathrm{o}\) by \(n_\mathrm{cyc}^{o=k}\), we have 
    \[2n_\mathrm{cyc}^{o=0}+n_\mathrm{cyc}^{o=1} \le 2\abs{E_\kappa^2}-\abs{V_\mathrm{o}\cap V_\kappa^2}.\]
    \item\label{int a rule} The numbers of \emph{selected internal \(0\mathrm{tr}\)- and \(t\)-vertices} are
    \[\abs{V_\mathrm{i}\cap V_\mathrm{o}^{0\mathrm{tr}}}= \begin{cases}
      2p(a-1), & j\in \mathfrak{a}\\ 2pa, & j\not\in\mathfrak{a},
    \end{cases},\quad \abs{V_\mathrm{i}\cap V_\mathrm{o}^{t}}= \begin{cases}
      2p(t-1), & j\in \mathfrak{t}\\ 2pt, & j\not\in\mathfrak{t},
    \end{cases}\] 
    where in the averaged case \(j:=l\) and \(j\) is determined by the lhs.\ of~\eqref{eq cum expansion formula iso} in the isotropic case.
    \item\label{a rule} \newcounter{lastgen}\setcounter{lastgen}{\value{genpropi}+1} If \(j\in\mathfrak{a}\) (with again \(j:=l\) in the averaged case), then the set of selected \(0\mathrm{tr}\)-vertices \(V_\mathrm{o}^{0\mathrm{tr}}\) satisfies
    \[2 \abs{E_\kappa^2} + \abs{E_\kappa^{\ge 3}} - 2p\le \abs{V_\mathrm{o}^{0\mathrm{tr}}\cap V_\kappa^2}\le 2p,\]
    while otherwise \(V_\mathrm{o}^{0\mathrm{tr}}\cap V_\kappa^2=\emptyset\). Similarly, if \(j\in\mathfrak{t}\), then the set of selected \(t\)-vertices \(V_\mathrm{o}^t\) satisfies
    \[2 \abs{E_\kappa^2} + \abs{E_\kappa^{\ge 3}} - 2p\le \abs{V_\mathrm{o}^t\cap V_\kappa^2}\le 2p,\]
    while otherwise \(V_\mathrm{o}^t\cap V_\kappa^2=\emptyset\). 
  \end{genprop}
  The graphs \(\Gamma\in \cG_p^\mathrm{av}\) satisfy~\ref{perfect matching}--\ref{a rule} and in addition:
  \begin{avprop}[start=\value{lastgen}]
    \item\label{no external} There are no external vertices, i.e.\ \(V_\mathrm{e}=\emptyset\)
    \item\label{number of internal vertices} The number of internal vertices satisfies \(\abs{V_\mathrm{i}}=2(l-1)p\).
  \end{avprop}  
  The graphs \(\Gamma\in \cG_p^\mathrm{iso}\) satisfy~\ref{perfect matching}--\ref{a rule} and in addition: 
  \begin{isoprop}[start=\value{lastgen}]
    \item\label{number of external} The number of external vertices is \(\abs{V_\mathrm{e}}=4p\) each \(v\in V_\mathrm{e}\) has degree \(d_g(v)=1\) and the unique connected vertex \(u\in V\) with \((uv)\in E_g\) or \((vu)\in E_g\) satisfies \(u\in V_\kappa\).
    \item\label{no internal} The number of internal vertices satisfies \(\abs{V_\mathrm{i}}=2p(l-2)\).
  \end{isoprop}
\end{proposition}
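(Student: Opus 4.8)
The plan is to prove Proposition~\ref{cumulant expansion prop} by carrying out the iterated cumulant expansion already illustrated in~\eqref{WGAImGA 1st exp}--\eqref{iso expansion example graph} and then verifying each of the claimed properties by tracking what every expansion step does to the graph. Starting from the $2p$-fold product of factors $\braket{\un{WG_1B_1\cdots G_lB_l}}$ (resp.\ the isotropic factors) together with their conjugates, each factor carrying exactly one explicit $W$, one picks a single $w_{ab}$, writes that factor as $\sum_{ab}w_{ab}(\cdots)$, and applies~\eqref{eq cum exp2}. By the design of the renormalisation~\eqref{eq:defunder}, the two subtracted terms in the underline cancel exactly the $k=1$ contributions in which the derivative acts back on the resolvent chain carrying that same $W$; this is the mechanism that prevents the expansion from producing self-energy renormalisation terms. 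Differentiating resolvents via $\partial G=-G\Delta G$ and $\partial\Im G=-(G\Delta\Im G+\Im G\Delta G^\ast)$ by Leibniz and iterating, all explicit $W$'s are removed after at most $2p$ rounds; truncating the cumulant series at order $R=12p$ the accumulated remainder is $\landauO{N^{-2p}}$ as in~\cite[Proposition 3.2]{MR3941370}. The resulting finite sum of monomials in resolvent- and cumulant-matrix entries with internal summations is precisely a sum of graph values in the sense of~\eqref{val def}: each $\Delta$-insertion attaches half-edges to the two $\kappa$-vertices of the expanded cumulant, and each $l$-fold resolvent product is drawn as a chain of $l$ edges through $l-1$ internal vertices. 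Throughout, the argument is first run under the simplifications~\ref{assump w2}--\ref{assump j iso}, whose removal is deferred to Appendix~\ref{sec gen}.

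The elementary structural properties then follow by induction on the number of expansion rounds. Each round creates exactly one cumulant matrix joining two freshly created $\kappa$-vertices, so $(V_\kappa,E_\kappa)$ is a perfect matching (\ref{perfect matching}), and since there are $2p$ explicit $W$'s to eliminate while at least one contraction is forced, $1\le\abs{E_\kappa}\le 2p$ (\ref{number of kappa edges}). Differentiating an $\Im G$-edge splits it into one imaginary and one non-imaginary edge, so the number of imaginary edges is invariant and equals $i$ per factor, giving~\eqref{imG count}; the total $G$-edge count~\eqref{eg total count} follows since each factor starts with $l$ (resp.\ $l-1$) resolvent edges, each derivative adds one, and a $\kappa$-edge of $G$-degree $k$ absorbs $k-1$ derivatives at each of its two ends. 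The in/out-degree balance~\ref{degree of kappa vertices} is immediate because a cumulant of $w_{ab}$ with copies of $w_{ab},w_{ba}$ produces matched incoming and outgoing legs at the two matched vertices, and the vertex counts~\ref{number of internal vertices}, \ref{no external}, \ref{number of external}, \ref{no internal} are the corresponding tallies in the averaged and isotropic cases. Finally~\ref{no loops} holds because, after the underline cancellations, a $G$-cycle through only internal vertices and at most one degree-two $\kappa$-vertex would be an already-subtracted self-contraction, so every $E_g$-cycle on $V_\kappa^2\cup V_\mathrm{i}$ passes through at least two $V_\kappa^2$-vertices, which (since isolated cycles are vertex-disjoint and $\abs{V_\kappa^2}=2\abs{E_\kappa^2}$) also bounds the number of cycles.

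The substantive part is the bookkeeping of the orthogonality vertices: choosing the selection $V_\mathrm{o}^t\dot\cup V_\mathrm{o}^{0\mathrm{tr}}$ and proving~\ref{Va Vcyc sum claim}, \ref{int a rule} and~\ref{a rule}. The idea is to follow, along the expansion, how the $a$ traceless matrices $B_k$ ($k\in\mathfrak a$) and the $t$ transpose-switches ($k\in\mathfrak t$) of each of the $2p$ initial factors propagate. A derivative landing strictly inside a block $G_kB_kG_{k+1}$ that realizes an orthogonality may destroy that adjacency, but it simultaneously raises the order of the cumulant it belongs to (supplying the extra smallness needed to compensate) and leaves the traceless matrix, or the transpose, attached to one of the two new edges; the degree-two vertex next to that edge is then placed into $V_\mathrm{o}$. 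Summing over the $2p$ factors yields the internal totals in~\ref{int a rule}, the discrepancy between $j\in\mathfrak a$ (resp.\ $j\in\mathfrak t$) and $j\notin\mathfrak a$ being that the slot $B_j$ adjacent to the $W$ of the outermost underline is consumed by that $W$-contraction and so contributes to a $\kappa$-vertex rather than to an internal orthogonality vertex. For~\ref{a rule} one shows that a second-order contraction ($d_g=2$) at a vertex is automatically a $0\mathrm{tr}$- or $t$-vertex unless the derivative coming from that very contraction was forced to break an adjacency, and the lower bound $2\abs{E_\kappa^2}+\abs{E_\kappa^{\ge 3}}-2p\le\abs{V_\mathrm{o}^{0\mathrm{tr}}\cap V_\kappa^2}$ quantifies that only a bounded budget of such breakings, controlled by $2p$ minus the number of higher-order cumulant edges, is available across the whole graph. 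Property~\ref{Va Vcyc sum claim} is obtained in the same pass: an isolated $G$-cycle carrying fewer than two selected orthogonality vertices must use a $V_\kappa^2$-vertex unproductively, and summing this over all cycles gives the inequality.

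The main obstacle I expect is exactly this last step, namely making the choice of $V_\mathrm{o}^t,V_\mathrm{o}^{0\mathrm{tr}}$ globally consistent so that~\ref{Va Vcyc sum claim} and the lower bounds in~\ref{a rule} hold simultaneously. The tension is that one and the same $0\mathrm{tr}$- or $t$-vertex may be needed both to compensate an orthogonality lost elsewhere and to help kill a bad isolated cycle, so the selection rule must be designed---for instance by processing the expansion forest and greedily assigning each newly created orthogonality vertex to the block whose adjacency it preserves---so that no vertex is used twice and the cycle bound still closes; keeping the higher-order cumulant gains and the lost orthogonality factors in exact balance is the combinatorial heart of the matter. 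By contrast, the termination of the expansion, the negligibility of the remainder, and the degree- and count-identities are routine, if tedious, and the passage from the simplified setting~\ref{assump w2}--\ref{assump j iso} to the general one is treated separately in Appendix~\ref{sec gen}.
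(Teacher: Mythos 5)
Your overall strategy coincides with the paper's: iterate the cumulant expansion \eqref{eq cum exp2} until all $W$'s are exhausted, encode the resulting monomials as graphs, and verify the structural properties by induction along the expansion. The routine items --- \ref{perfect matching}--\ref{no loops}, the vertex counts, the invariance of the number of $\Im G$'s, and the remainder estimate --- are correctly handled.

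The gap is in the heart of the matter, the selection of $V_\mathrm{o}$ and the proof of \ref{int a rule}--\ref{a rule}. You describe a mechanism in which a derivative landing inside a block $G_kB_kG_{k+1}$ ``may destroy that adjacency'' and must be ``compensated'' by the higher order of the cumulant, and you then flag as the main open obstacle a global consistency problem for the selection (a vertex being ``used twice'' for \ref{Va Vcyc sum claim} and \ref{a rule}). Neither of these issues actually arises, and no greedy or compensating scheme is needed. The key observation you are missing is that the selected internal orthogonality vertices are \emph{stable} under the expansion: when a derivative acts on the $\cG$-factor $G_kB_k$, the two resulting edges $e_1,e_2$ satisfy $R(e_2)=B_k$ and $L(e_1)=R(e_1)=L(e_2)=I$, and the transposition label is inherited by both; hence the specific internal vertex sitting between $(G_kB_k)$ and $(G_{k+1}B_{k+1})$ in the initial graph remains a $0\mathrm{tr}$-vertex (resp.\ $t$-vertex) forever. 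So the selection rule is simply: for each $k\in(\mathfrak a\cup\mathfrak t)\setminus\set{j}$ take the $2p$ such initial internal vertices, which gives \ref{int a rule} exactly; and if $j\in\mathfrak a\cup\mathfrak t$, take one $V_\kappa^2$-vertex for each $W$ that acts as a degree-$2$ cumulant on some resolvent --- such a vertex is automatically an orthogonality vertex because in $G_jB_j\Delta^{ab}G_{j+1}(\partial_{ba}+\sigma\partial_{ab})$ the index $a$ sees $\braket{B_j}=0$ on one side and the identity on the other (resp.\ exactly one transposed resolvent). The lower bound in \ref{a rule} is then pure counting: of the $\abs{E_\kappa^2}$ degree-$2$ expansions, at most $2p-\abs{E_\kappa}$ can have their partner derivative land on another $W$ instead of a $G$, giving $n\ge 2\abs{E_\kappa^2}+\abs{E_\kappa^{\ge3}}-2p$. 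Finally \ref{Va Vcyc sum claim} is an immediate consequence of \ref{no loops}: every cycle contains at least two $V_\kappa^2$-vertices, so a cycle with $k\in\set{0,1}$ selected vertices contains at least $2-k$ vertices of $V_\kappa^2\setminus V_\mathrm{o}$, and summing over cycles against $\abs{V_\kappa^2\setminus V_\mathrm{o}}=2\abs{E_\kappa^2}-\abs{V_\mathrm{o}\cap V_\kappa^2}$ closes the bound with no interaction with \ref{a rule} at all. Without the stability observation your plan does not close, since the compensation argument you sketch would at best yield weaker counts than the exact identities claimed in \ref{int a rule}.
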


\begin{definition}\label{def av iso graphs}
  For some parameters \(a,t,l,i,p\in\N\) we call graphs \(\Gamma\in\cG\) together with their selected \(V_\mathrm{o}^t,V_\mathrm{o}^{0\mathrm{tr}}\) sets satisfying~\ref{perfect matching}--\ref{a rule} and~\ref{no external}--\ref{number of internal vertices} \emph{av-graphs}, while we call graphs \(\Gamma\in\cG\) (together with the sets \(V_\mathrm{o}^t,V_\mathrm{o}^{0\mathrm{tr}}\) and the extra parameter \(j\in[l-1]\)) satisfying~\ref{perfect matching}--\ref{a rule} and~\ref{number of external}--\ref{no internal} \emph{iso-graphs}. 
\end{definition}

\begin{proof}[Proof of Proposition~\ref{cumulant expansion prop}]
  In order to obtain~\eqref{eq cum expansion formula} we iteratively perform cumulant expansions exactly as in the examples~\eqref{WGAImGA 1st exp} and~\eqref{iso expansion example} until no underlined terms remain. Each cumulant expansion removes at least one underlined term, hence this process terminates. 
  
  We now explain which kinds of \(G\)-edges are created through this cumulant expansion procedure for the averaged case~\eqref{eq cum expansion formula}, the isotropic case~\eqref{eq cum expansion formula iso} being very similar. Initially, the graph representing the lhs.\ of~\eqref{eq cum expansion formula} after writing out \(\abs{\Tr X}^{2p}=(\Tr X)^p (\Tr X^\ast)^p\) consists of \(2p\) cycles each with a \(W\) factor and \(l\) \(G\)-edges representing \emph{\(\cG\)-factors} \(G_k B_k\) or \(B_k^\ast G_k^\ast\) for \(k\in[l]\). Each of these \(\cG\)-factors can be fully described via the labels \(i(e),t(e),\ast(e),z(e),L(e),R(e)\) from Definition~\ref{def graphs}, the first four being determined by the form of \(G_k\) while the latter two encode the multiplication from the left/right by deterministic matrices, e.g.\ \(L(e)=I,R(e)=B_k\) for \(G_k B_k\). While performing cumulant expansions of some \(W=\sum_{ab}w_{ab}\Delta^{ab}\) using~\eqref{eq cum exp} these \(G\)-edges are modified and new \(G\) edges are created via the action of derivatives, and \(\kappa\)-edges representing \(\kappa(ab,\bm\alpha)\) are also created. This process creates creates (finitely) many different graphs for every cumulant expansion, both through the explicit summation over cumulants in~\eqref{eq cum exp} and the Leibniz rule for the derivative \(\partial_{\bm\alpha}\) acting on the product of all remaining \(W\)'s and \(G\)'s. We note that for resolvent derivatives we have 
  \[
  \begin{split}
    \partial_{ab} G&= -G\Delta^{ab}G,\quad \partial_{ab} G^\ast = -G^\ast \Delta^{ab}G^\ast, \\
    \partial_{ab} G^t&= -G^t\Delta^{ba}G^t,\quad \partial_{ab} (G^\ast)^t= -(G^\ast)^t\Delta^{ba}(G^\ast)^t
  \end{split}\] 
  and  
  \[\partial_{ab} \Im G= -G\Delta^{ab}\Im G - \Im G\Delta^{ab}G^\ast, \quad \partial_{ab} (\Im G)^t= -(\Im G)^t\Delta^{ba}G^t - (G^\ast)^t\Delta^{ba}(\Im G)^t.\] 
  Hence, a derivative action on \(e\) representing the \(\cG\)-factor \(\cG^e=G_kB_k\) (or similarly \(B_k^\ast G_k^\ast\)) creates two \(G\)-edges \(e_1,e_2\), such that only the resolvent representing \(e_2\) is multiplied from the right by \(R(e_2)=B_k\) while \(L(e_2)=L(e_1)=R(e_1)=I\). The labels \(t(e),z(e)\) indicating the transposition status and spectral parameter are directly inherited to both \(e_1,e_2\), while the label \(i(e)\) is inherited to exactly one of \(e_1,e_2\), \(i(e_1)=1\) the other one satisfying \(i(e_2)=0\), \(\ast(e_2)\in\set{0,1}\). If \(\ast(e)=1,i(e)=0\), then both \(e_1,e_2\) satisfy \(\ast(e_1)=\ast(e_2)=1\). It follows inductively that each \(\cG\)-factor encountered in the expansion can be represented by an edge \(e\) with six labels \(i(e),t(e),\ast(e),z(e),L(e),R(e)\), with \(L(e)=I\), or \(L(e)=B_k^\ast\) for some \(k\), while \(R(e)=I\) or \(R(e)=B_k\) for some \(k\), with for each \(e\) at least one of \(L(e),R(e)\) being the identity. The spectral parameter label \(z(e)\) satisfies \(z(e)\in \set{z_1,\dots,z_l}\) for each \(e\). For example, the \(ab\)-derivative of the \(\cG\)-factor \(\cG^e=B(\Im G(z))^t\) described by the edge \(e\) with labels \((1,1,0,z,B,I)\) yields a sum of two terms and hence the two new graphs given by
  \[ \begin{tikzpicture}[baseline={([yshift=-5pt]current bounding box.center)},font=\footnotesize,>=Stealth,node distance = 15pt and 60pt]
    \node[fill=white,rectangle,draw=black,inner sep=0pt,minimum size=3pt] (A) {};
    \node[fill=white,rectangle,draw=black,inner sep=0pt,minimum size=3pt] (B) [right= of A] {};
    \node (C) [below= of A] {\(\partial_{ab}\)};
    \path[-stealth]
    (A) edge node [above] {\((1,1,0,z,B,I)\)} (B);
    \path[-stealth]
    (C) edge[dashed, bend right] ($ (A) !.5! (B) $);
  \end{tikzpicture}  
  \quad\Rightarrow\quad
  \begin{tikzpicture}[baseline={([yshift=-6pt]current bounding box.center)},font=\footnotesize,>=Stealth,node distance = 20pt and 60pt]
    \node[fill=white,rectangle,draw=black,inner sep=0pt,minimum size=3pt] (A) {};
    \node[circle,fill=black,label={\(b\)},inner sep=0pt,minimum size=3pt] (B) [right= of A] {};
    \node[fill=white,rectangle,draw=black,inner sep=0pt,minimum size=3pt] (C) [below= of A] {};
    \node[circle,fill=black,label={\(a\)},inner sep=0pt,minimum size=3pt] (D) [below= of B] {};
    \path[-stealth]
    (A) edge node [above] {\((1,1,0,z,B,I)\)} (B);
    \path[-stealth]
    (D) edge node [above] {\((0,1,0,z,I,I)\)} (C);
  \end{tikzpicture} 
  \quad \& \quad
  \begin{tikzpicture}[baseline={([yshift=-6pt]current bounding box.center)},font=\footnotesize,>=Stealth,node distance = 20pt and 60pt]
    \node[fill=white,rectangle,draw=black,inner sep=0pt,minimum size=3pt] (A) {};
    \node[circle,fill=black,label={\(b\)},inner sep=0pt,minimum size=3pt] (B) [right= of A] {};
    \node[fill=white,rectangle,draw=black,inner sep=0pt,minimum size=3pt] (C) [below= of A] {};
    \node[circle,fill=black,label={\(a\)},inner sep=0pt,minimum size=3pt] (D) [below= of B] {};
    \path[-stealth]
    (A) edge node [above] {\((0,1,1,z,B,I)\)} (B);
    \path[-stealth]
    (D) edge node [above] {\((1,1,0,z,I,I)\)} (C);
  \end{tikzpicture} 
  \]
  or, in formulas, 
  \[ \partial_{ab}\Bigl[ B(\Im G(z))^t\Bigr] = -B(\Im G(z))^t\Delta^{ba}G^t-B(G^\ast)^t\Delta^{ba} (\Im G(z))^t.\]
  
  We now describe the selection of the orthogonality vertices \(V_\mathrm{o}^t,V_\mathrm{o}^{0\mathrm{tr}}\) which is done in two steps. To unify notations we set \(j:=l\) in the averaged case.
  \begin{enumerate}[label=(orth-\arabic*)]
    \item\label{claim orth1} For each \(k\in\mathfrak{t}\setminus\set{j},\mathfrak a\setminus\set{j}\) we collect \(2p\) distinct vertices from \(V_\mathrm{i}\) into the sets \(V_\mathrm{o}^t\) and \(V_\mathrm{o}^{0\mathrm{tr}}\), respectively. 
    \item\label{claim orth2} If \(j\in\mathfrak t\) or \(j\in\mathfrak a\), then we select one vertex from \(V_\kappa^2\) into \(V_\mathrm{o}^t\) or \(V_\mathrm{o}^{0\mathrm{tr}}\), respectively, for each \(W\) acting as a degree-\(2\) cumulant on some resolvent. 
  \end{enumerate}
  Regarding~\ref{claim orth1} for \(k\in\mathfrak{a}\cup\mathfrak{t}\setminus\set{j}\) the initial graphs representing the lhs.\ of~\eqref{eq cum expansion formula}--\eqref{eq cum expansion formula iso} contain \(p\) internal vertices \(v_1,\ldots v_{p}\) between \(G\)-edges representing \((G_k B_k),(G_{k+1}B_{k+1})\) and \(p\) internal vertices \(v_{p+1},\ldots v_{2p}\) between \(G\)-edges representing \((B_{k+1}^\ast G_{k+1}^\ast),(B_k^\ast G_k^\ast)\). The \(G\)-edges adjacent to these internal vertices may change due to derivative actions along the cumulant expansions, however in case \(k\in\mathfrak a\), due to the derivative rules explained in the paragraph above it is ensured that all times the two unique \(G\)-edges \(e_1,e_2\) adjacent to \(v_k\) satisfy \(R(e_1)=B_k,L(e_2)=I\) for \(k\le p\) and \(R(e_1)=I,L(e_2)=B_k^\ast\), so that \(v_k\) is guaranteed to remain an \(0\mathrm{tr}\)-vertex. Similarly, for \(k\in\mathfrak t\) it is ensured that the two unique \(G\)-edges \(e_1,e_2\) adjacent to \(v_k\) satisfy \(t(e_1)=1,t(e_2)=0\), so that \(v_k\) is guaranteed to remain an \(t\)-vertex. 
  
  Regarding~\ref{claim orth2} we note that while performing the cumulant expansion for \(W=\sum_{ab}w_{ab}\Delta^{ab}\) in \(G_j B_j WG_{j+1}\) we obtain the degree-\(2\) cumulant term as 
  \[ \sum_{ab}G_j B_j \Delta^{ab}G_{j+1}\bigl(\partial_{ba}+\sigma\partial_{ab}\bigr)\] 
  the derivatives \(\partial_{ab}\) or \(\partial_{ba}\) acting on some resolvent \(G\) result in \(G\Delta^{ab}G\) or \(G\Delta^{ba}G\). In case \(j\in\mathfrak a\) the \(\kappa\)-vertex corresponding to the summation index \(a\) satisfies the definition of \(0\mathrm{tr}\)-vertex since \(\braket{B_j}=0\) and the other resolvent is not multiplied by some additional matrix in the \(a\)-direction. Similarly, in case \(j\in\mathfrak{t}\) either both or none of the two \(G\)'s in \(G\Delta^{ab}G\) or \(G\Delta^{ba}G\) are transposed, while, by definition, exactly one of \(G_j,G_{j+1}\) is transposed. Thus exactly one of the \(\kappa\)-vertices corresponding to the \(a\) or \(b\)-summations satisfies the definition of being a \(t\)-vertex. 
  
  We note that the condition \(\mathfrak a\cap\mathfrak t=\emptyset\) ensures the sets \(V_\mathrm{o}^t,V_\mathrm{o}^{0\mathrm{tr}}\) constructed in this way to be disjoint. We now check that the properties~\ref{perfect matching}--\ref{a rule}, as well as~\ref{no external}--\ref{number of internal vertices} and~\ref{number of external}--\ref{no internal} also hold for these graphs.
  
  The properties~\ref{perfect matching}--\ref{number of kappa edges} are obvious by construction since each cumulant expansion comes with two \(\kappa\)-vertices, and in total there are \(2p\) underlined terms and thereby at most \(2p\) cumulant expansions. The properties~\ref{number of internal vertices},~\ref{no internal} follow from the fact that for each factor of \(\Tr \un{WG_1B_1\cdots G_l B_l}\) and \(\braket{\vx,\un{G_1 B_1\cdots G_{j} B_j W G_{j+1}B_{j+1}\cdots B_{l-1}G_l }\vy}\) there are \(l-1\) and respectively \(l-2\) internal vertices of in- and out-degree \(1\) and that these properties remain invariant under cumulant expansions. Similarly, the properties~\ref{no external} and~\ref{number of external} hold true trivially for the initial terms and remain invariant under cumulant expansions. 
  
  For~\ref{degree of kappa vertices} note that the cumulant \(\kappa(ab,(\alpha_1,\ldots,\alpha_k))\) comes together with matrices 
  \[ \Delta^{ab}, (\Delta^{\alpha_1})^{(t)},\ldots, (\Delta^{\alpha_k})^{(t)}\] 
  after derivative action, where the transpose is taken in case the derivative acts on a transposed resolvent. In all cases the in-degree of the vertex associated with \(a\) is equal to the out-degree of the vertex associated with \(b\). 
  
  For~\ref{no loops} note that by the definition of the underline-renormalisation it follows that for degree two edges the corresponding \(\partial_{ba}\) derivative cannot act on its own trace and therefore cycles have to involve at least two \(V_\kappa^2\) vertices. 
  
  For~\ref{Va Vcyc sum claim} we note that \(\abs{V_\kappa^2\setminus V_\mathrm{o}}=2\abs{E_\kappa^2}-\abs{V_\mathrm{o}\cap V_\kappa^2}\), while due to~\ref{no loops} each cycle with zero \(V_\mathrm{o}\)-vertices contains at least two \(V_\kappa^2\setminus V_\mathrm{o}\)-vertices and each cycle with one \(V_\mathrm{o}\)-vertex contains at least one \(V_\kappa^2\setminus V_\mathrm{o}\)-vertex. 
  
  The claim~\ref{int a rule} follows immediately from the construction~\ref{claim orth1}. Similarly, claim~\ref{a rule} follows from the construction~\ref{claim orth2} together with the observation that because \(\abs{E_\kappa}\) is the total number of cumulant expansions, a total of \(2p-\abs{E_\kappa}\) derivatives have acted on some \(W\), and thus the number \(n\) of \(W\)'s acting on as degree-\(2\) cumulants on some \(G\) satisfies 
  \begin{equation}\label{number of G actions}
    n \ge \abs{E_\kappa^2} - (2p-\abs{E_\kappa})= 2\abs{E_\kappa^2} + \abs{E_\kappa^{\ge3}}-2p,
  \end{equation}
  and, trivially, \(n\le 2p\). This concludes the proof of~\ref{a rule} in the mutually exclusive cases \(j\in\mathfrak a\) and \(j\in\mathfrak t\) (recall that \(\mathfrak a\cap\mathfrak t=\emptyset\) by assumption). 
  
  For the claim~\eqref{imG count} on the number of \(G\)-edges in~\ref{number of G edges} note that the number of \(\Im G\)'s remains invariant under the derivative actions. For~\eqref{eg total count} note that each derivative acting on some \(G\) increases the number of \(G\)'s by one, while each of the \(2p-\abs{E_\kappa}\)derivatives acting on some \(W\) leaves the number of \(G\)'s invariant. Thus we conclude that the total number of \(G\)'s is  
  \[ 2lp + \sum_{e\in E_\kappa} d_g(e) - \abs{E_\kappa} - (2p-\abs{E_\kappa}) = \sum_{e\in E_\kappa} d_g(e) +2(l-1)p\]
  and~\eqref{eg total count} follows.
\end{proof}
\begin{remark}\label{remark alt underline expl}
    Proposition~\ref{cumulant expansion prop} holds true verbatim also under the alternative definition of the renormalisation outlined in Remark~\ref{remark alt underline} in case no \(G\) is transposed. Also the proof of the proposition remains unchanged except for the proof of Property~\ref{no loops}. For the alternative renormalisation also for degree two edges when expanding \(\un{WG\cdots}=\sum_{ab}\Delta^{ab}G\cdots \partial_{ba}\) the derivative \(\sigma\partial_{ab}\) may act on its own trace. However, since no \(G\) is transposed this action will necessarily result in \(\Delta^{ab} G\cdots G\Delta^{ab}\) and therefore no loops are created. 
\end{remark}

Using Proposition~\ref{cumulant expansion prop}, in order to conclude Theorem~\ref{chain G underline theorem}, it remains to estimate \(\Val(\Gamma)\) for each \(\Gamma\in\cG_p\) as follows. We note that the following Proposition is valid for any av-/iso-graphs \(\Gamma\in\cG\) from Definition~\ref{def av iso graphs}, i.e.\ graphs satisfying the properties~\ref{perfect matching}--\ref{a rule} and~\ref{no external}--\ref{number of internal vertices}/\ref{number of external}--\ref{no internal} above rather than only for the specific families of graphs \(\cG_p^\mathrm{av},\cG_p^\mathrm{iso}\) arising in the cumulant expansion. 
\begin{proposition}[Value estimate]\label{prop value est}
  For each av-graph \(\Gamma\in\cG\) for some parameters \(a,t,l,p,i\in\N\) we have the bound
  \begin{equation}\label{eq val est}
    \abs{\Val(\Gamma)} \prec \begin{cases}
      \rho^{2(b+1)p} N^{2bp} K^{-2bp}, & b=l\\
      \Lambda_+^{2ap}\Pi_+^{2tp}\rho^{2ip\vee 2(b+1)p}N^{p(a+t+2b)} K^{-p(1+2b)}  , & b<l,
    \end{cases},\quad b:= l-a-t 
  \end{equation}  
  with \(K\) as in~\eqref{eta K def}, while for each iso-graph \(\Gamma\) for some parameters \(a,t,l,p,i\in\N\) we have the bound
  \begin{equation}\label{eq val est2}
    \abs{\Val(\Gamma)} \prec \Lambda_+^{2ap}\Pi_+^{2tp}\rho^{2ip\vee 2(b+1)p}N^{p(a+t+2b)} K^{-p(1+2b)},\quad b:= l-a-t-1.
  \end{equation}  
\end{proposition}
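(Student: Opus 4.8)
# Proof Proposal for Proposition~\ref{prop value est}

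The plan is to estimate $\abs{\Val(\Gamma)}$ by a systematic power-counting over the graph structure, tracking separately the contributions of $N$-factors, $\rho/\eta$-factors (from Ward identities and the single-$G$ local law), $\Lambda_+$-factors (from $0\mathrm{tr}$-vertices), and $\Pi_+$-factors (from $t$-vertices). First I would record the a priori inputs: the naive entrywise bound $\abs{\cG^e_{\vx_u\vx_v}}\prec1$ for every $G$-edge, the Ward identity $\sum_a G_{xa}\ov{G_{ya}}=\eta^{-1}(\Im G)_{xy}$ which converts a summation vertex between an outgoing and an incoming $G$-edge (with conjugate spectral parameters) into a factor $\eta^{-1}$ times one $\Im G$, the bound $\abs{\braket{\Im G}}\prec\rho$, and most crucially the two refined bounds already established upstream: the traceless single-$G$ law $\abs{\braket{GA}}\prec\rho^{1/2}\Lambda_+^A/(N\sqrt\eta)$ from Proposition~\ref{pro:lambll1} together with the degree-two estimates (the content of Lemma~\ref{degree two lemma}, invoked repeatedly in Section~\ref{sec:loclaws}), which say that a chain of two resolvents with a traceless $A$ in between, after a Ward reduction, carries an extra $\Lambda_+$ and an extra $\rho^{1/2}$ over the trivial bound. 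The analogous $\Pi_+$-gains come from $GG^t$-type chains and the local law~\eqref{eq:lltrantran}.

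The core of the argument is a \emph{vertex-by-vertex reduction}: one repeatedly picks a degree-two internal vertex $v$, performs either a Ward identity (if the edge orientations allow) or a Schwarz-plus-Ward estimate, and thereby merges the two adjacent $G$-edges into a single longer $\cG$-factor while harvesting the appropriate prefactor. The key accounting is: (i) each such reduction of an ordinary internal vertex pays $\eta^{-1}$ and produces $\rho$ (via $\Im G$), consistent with the $K^{-2bp}$-type denominators once combined with the $N$-powers from the free index summations; (ii) each reduction at a \emph{selected} $0\mathrm{tr}$-vertex $v\in V_\mathrm{o}^{0\mathrm{tr}}$ additionally gains a factor $\Lambda_+$ and an extra $\rho^{1/2}$ (equivalently, a factor $(\sqrt N\eta)^{-1}\sqrt{\rho/N\eta}$ of improvement over the trivial estimate), which is precisely the mechanism giving the $(\sqrt N\eta)^{a+t}$ and the $\sqrt{\rho^*/N\eta_*}$ improvements asserted in Theorem~\ref{chain G underline theorem}; (iii) each selected $t$-vertex $v\in V_\mathrm{o}^t$ gains $\Pi_+$ analogously. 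The counts of such selected vertices are exactly what Properties~\ref{int a rule}--\ref{a rule} of Proposition~\ref{cumulant expansion prop} control: roughly $2pa$ internal $0\mathrm{tr}$-vertices plus, when $j\in\mathfrak a$, an additional $2\abs{E_\kappa^2}+\abs{E_\kappa^{\ge3}}-2p$ of them among the $V_\kappa^2$-vertices. The cumulant prefactor $\prod_{(uv)\in E_\kappa}N^{-d_g(u)/2}$ together with $\abs{V_\kappa}=2\abs{E_\kappa}$ free $\kappa$-summations contributes $N^{\abs{E_\kappa}-\sum_e d_g(e)/2}=N^{-\sum_{k\ge2}(k/2-1)\abs{E_\kappa^k}}$, which is nonpositive and must be distributed carefully between the degree-two and the higher-degree cumulant vertices; the extra $N$-gain from higher cumulants is exactly what offsets the loss of an effective $A$-factor when a Schwarz bound (as in~\eqref{Schw}, or~\eqref{eq:bda1}) destroys the tracelessness. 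Property~\ref{Va Vcyc sum claim} is needed to handle isolated $G$-cycles: a cycle with no selected vertex contributes only $\rho$ (not a gain), but it ties up at least two unselected $V_\kappa^2$-vertices, so the budget $2n_\mathrm{cyc}^{o=0}+n_\mathrm{cyc}^{o=1}\le2\abs{E_\kappa^2}-\abs{V_\mathrm{o}\cap V_\kappa^2}$ keeps the count of losable vertices in check.

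Concretely, after fixing a spanning structure I would organize $E_g$ into maximal $\cG$-chains between $\kappa$-vertices (and, in the iso case, external vertices), estimate each chain by iterating the degree-two reduction above, and then sum over the $\abs{V_\kappa}$ remaining $\kappa$-indices. The total exponent of $N$ emerging from free summations minus the cumulant normalization, the total exponent of $\eta^{-1}$ from Ward identities, and the total exponents of $\rho$, $\Lambda_+$, $\Pi_+$ are then combined using the identities $\abs{E_g}=\sum_{e\in E_\kappa}d_g(e)+2(l-1)p$, $\abs{V_\mathrm{i}}=2(l-1)p$ (av) or $2(l-2)p$ (iso), $\abs{\set{e:i(e)=1}}=2ip$, together with the counts in~\ref{int a rule}--\ref{a rule}; a somewhat tedious but mechanical bookkeeping then yields the exponents $N^{p(a+t+2b)}K^{-p(1+2b)}$ and the $\rho$-exponent $2ip\vee2(b+1)p$ with $b=l-a-t$ (resp.\ $b=l-a-t-1$). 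The $b=l$ case of~\eqref{eq val est} (i.e.\ $a=t=0$) is the robust bound where no orthogonality is exploited and only Ward identities are used; it is the easiest and serves as the base case.

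\medskip

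\noindent\textbf{Main obstacle.} The genuinely hard part is the \emph{combinatorial optimization over which degree-two vertices to Ward-reduce versus Schwarz-estimate}, so that every selected orthogonality vertex actually delivers its $\Lambda_+$ (resp.\ $\Pi_+$) and extra $\sqrt\rho$ \emph{simultaneously}, without double-counting a Ward identity or wasting a $\rho/\eta$-factor. In particular, a $0\mathrm{tr}$-vertex only yields its gain if the two adjacent resolvents can be arranged into a $GAG^{(*)}$-pattern whose Schwarz-then-Ward estimate falls under Lemma~\ref{degree two lemma}; when the graph topology forces an orientation mismatch, one must instead pay from a higher-order cumulant, and one has to verify via Property~\ref{a rule} that there are always enough higher cumulants available (the inequality $n\ge2\abs{E_\kappa^2}+\abs{E_\kappa^{\ge3}}-2p$) to cover all the losses. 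Making this trade-off tight \emph{in every graph} — rather than in a worst case — is what distinguishes the present optimal bound from the cruder estimate in~\cite{1912.04100}, and it is where the bulk of the case analysis in the full proof will go; I would handle it by an explicit greedy algorithm that first assigns Ward identities to all selected vertices that admit them, then accounts for the residual selected vertices against the surplus of higher-order cumulants guaranteed by~\ref{a rule} and the cycle budget of~\ref{Va Vcyc sum claim}.
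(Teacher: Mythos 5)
Your overall architecture (reduce the graph, harvest gains at orthogonality vertices, power-count with the properties from Proposition~\ref{cumulant expansion prop}) matches the paper's, but the central mechanism you describe for extracting the $\Lambda_+$ and $\Pi_+$ gains is not viable as stated. You propose a vertex-by-vertex reduction in which each selected $0\mathrm{tr}$-vertex is handled by a local ``Schwarz-plus-Ward'' step that simultaneously delivers $\Lambda_+$ and an extra $\rho^{1/2}$. A local Cauchy--Schwarz at such a vertex replaces the traceless $A$ by $AA^\ast$, which has nonzero trace; this is exactly the loss the paper flags in~\eqref{Schw}, and no Ward identity can recover a $\Lambda_+$ afterwards, since $\Lambda_+$ is by definition an eigenvector-overlap quantity. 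In the actual proof the gains are harvested \emph{globally per maximal alternating chain}: one isolates the maximal subchains whose internal vertices are all in $V_\mathrm{o}$, applies a single Cauchy--Schwarz (or the $\braket{XYZ}$ inequality~\eqref{XYZ claim}) to split the chain at its non-orthogonality boundary, and then spectrally decomposes the fully alternating piece, converting every internal overlap $\braket{\bm u_a,B\bm u_b}$ into a factor of $\Lambda_+$ (resp.\ $\Pi_+$) via the local eigenvalue averaging~\eqref{tricka}. That is the content of Lemma~\ref{degree two lemma} (the cases organized by $o(e)$, $l(e)$ and the number of alternating chains $c(e)$); citing it as a black box while describing a contradictory local mechanism leaves the key step unproved.

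A second, independent gap: you do not address how the extra factor $K^{-\abs{E_\kappa^3}/2}$ (and its isotropic strengthening) is obtained from the degree-$\ge3$ $\kappa$-vertices. In the paper this is Lemma~\ref{lemma Ithree}, which requires constructing a sufficiently large \emph{Wardable} edge set — a subset of reduced edges such that every subgraph retains a vertex of degree at most two (Lemma~\ref{lemma degen}) — and the counting inequalities~\eqref{number V3 Vge3 adj} guaranteeing $\abs{E_\mathrm{Ward}}\ge\abs{E_\kappa^3}$. Without this step the exponents in~\eqref{eq val est}--\eqref{eq val est2} do not close: the entrywise estimate of Lemma~\ref{lemma Itwo} alone (your step after chain reduction) is off by $K^{\abs{E_\kappa^3}/2}$, which cannot be absorbed. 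Your proposed ``greedy algorithm'' gestures at a trade-off between Ward reductions and higher cumulants, but the relevant combinatorics is the degeneracy ordering of $V_\kappa^{\ge3}$, not a choice among degree-two vertices (those are all resummed unconditionally in the graph reduction of Lemma~\ref{lemma reduction}).
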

\begin{proof}[Proof of Theorem~\ref{chain G underline theorem}]
  Theorem~\ref{chain G underline theorem} follows immediately by combining Propositions~\ref{cumulant expansion prop} and~\ref{prop value est} under the simplifying assumptions made at the beginning of Section~\ref{sec cum exp}, the removal of which is discussed in Appendix~\ref{sec gen}. Following the proof Proposition~\ref{prop value est} it is evident that both \(\Lambda_+^{2ap}\) and \(\Pi_+^{2tp}\) can be replaced by the product of individual \(\Lambda_+^{B_k}\), \(\Pi_+^{B_k}\) for \(k\in\mathfrak a\cup \mathfrak t\), as claimed in Theorem~\ref{chain G underline theorem}.  
  
  Finally, regarding the replacement of \(\rho^i\) by \(\prod_{k\in\mathfrak i}\rho(z_k)\) in the bounds of Theorem~\ref{chain G underline theorem}, it is easy to see that during the cumulant expansion the number of \(\Im G(z_k)\) is preserved and each gives rise to a factor \(\rho(z_k)\) in Proposition~\ref{prop value est}, hence the factor \(\rho^{2ip}\) may be replaced by the factor \(\prod_{k\in\mathfrak i} \rho(z_k)^{2p}\). Similarly, for the replacement of \(\Lambda_+^a\) by \(\prod_{k\in\mathfrak a}\Lambda_+^{B_k}\) we note that each \(B_k\) appears exactly \(2p\) times also after the cumulant expansions, and therefore each \(\Lambda_+^{B_k}\) can only appear in at most the \(2p\)-th power on the rhs.\ of~\eqref{eq val est}--\eqref{eq val est2}.
\end{proof}

\subsection{Estimating graph values: Proof of Proposition~\ref{prop value est}} The proof of Proposition~\ref{prop value est} goes in three major steps formulated in Lemmata~\ref{lemma reduction},~\ref{lemma Itwo} and~\ref{lemma Ithree} which we first state and then use to conclude the proof of Proposition~\ref{prop value est}.

First, we express the value \(\Val(\Gamma)=\Val(\Gamma_\mathrm{red})\) as the value of the \emph{reduced graph} \(\Gamma_\mathrm{red}\) obtained from \(\Gamma\) by collapsing all degree-\(2\) vertices \(V_\mathrm{i}\cup V_\kappa^2\). Thus, in graph-theoretic terms, \(\Gamma_\mathrm{red}\) is the minimal (with the least number of edges) graph having \(\Gamma\setminus E_\kappa^2\) as a \emph{subdivision}. We claim that each summation index \(a_v\) for \(v\in V_\mathrm{i}\cup V_\kappa^2\) appears in exactly two \(\cG\)-factors and no \(\kappa\)-matrices, and thus the summation can be written as a matrix product after (potentially) transposing one of the two \(\cG\)'s in the cases of two incoming or two outgoing edges, e.g.\ \(\sum_{a_v} (GB)_{\vx a_v} G_{\vy a_v}=(GBG^t)_{\vx\vy}\). Indeed, the index \(a_v\) appears only in exactly two \(G\)-edges since \(d_g(v)=2\), cf.\ Definition~\ref{def graphs}. Moreover, due to~\ref{perfect matching} no \(\kappa\)-edge is adjacent to \(V_\mathrm{i}\) while for \(v\in V_\kappa^2\) the corresponding \(\kappa\)-edge \((uv)\) or \((vu)\) due to~\eqref{kappa mat def0} and~\eqref{kappa mat def} is given by \(\kappa^{1,1}\) or \(\kappa^{2,0}\) which are constant-\(1\), and constant \(\sigma\)-matrices, and thus effectively the index \(a_v\) does not appear in any \(\kappa^{(vu)/(uv)}\) matrix. 

In the reduction process the value of \(\Gamma\) effectively reduces to a summation over vertices of degree at least \(3\), traces of \(G\)-cycles and entries of \(G\)-chains and \(E_\kappa^{\ge 3}\)-matrices, represented by \(\Gamma_\mathrm{red}\). Here we use the terminology that a \emph{\(G\)-cycle} is a cycle of \(G\)-edges on \(V_\kappa^2 \cup V_\mathrm{i}\) vertices, irrespective of the edge orientation, and that a \emph{\(G\)-chain} is a chain of \(G\)-edges with internal \(V_\kappa^2 \cup V_\mathrm{i}\)-vertices and external \(V_\kappa^{\ge3}\cup V_\mathrm{e}\)-vertices, again irrespective of the edge orientation. Note that the reduction completely collapses each \(E_g\)-cycles on \(V_\mathrm{i}\cup V_\kappa^2\)-vertices into a single vertex with a loop edge. The sets of these single vertices and loop edges are denoted by \(V_\mathrm{cyc}\) and \(E_g^{\mathrm{red},\mathrm{cyc}}\). Therefore the edge set of reduced graph \(\Gamma_\mathrm{red}\) is naturally partitioned into \(V(\Gamma_\mathrm{red}):= V^{\ge 3}_\kappa \dot \cup V_\mathrm{e} \dot \cup V_\cyc\) and its edge set is \(E(\Gamma_\mathrm{red}):= E_g^\mathrm{red} \dot\cup E^{\ge 3}_\kappa\). 

The graph reduction by partial resummations corresponds to generalising the definition of value to
\begin{equation}\label{Val reduced def} 
  \begin{split}
    \Val(\Gamma_\mathrm{red}):= N^{-\abs{E_\kappa^2}+\abs{V_\mathrm{cyc}}}  \sum_{\substack{a_v\in[N]\\v\in V_\kappa^{\ge 3}}} &\biggl[\prod_{(uv)\in E_\kappa^{\ge 3}} \biggl( N^{-d_g(u)/2} \kappa^{(uv)}_{a_u a_v} \biggr)\biggr] \\
    &\times \biggl(\smashoperator[r]{\prod_{v\in V_\mathrm{cyc}}}\braket{\cG^{(vv)}}\biggr)\biggl(\prod_{\substack{(uv)\in E_g^{\red}\setminus E_g^{\red,\cyc}}} \cG^{(uv)}_{\vx_u\vx_v}\biggr).
  \end{split}
\end{equation}
where we defined
\begin{equation}\label{G chain def}
  \cG^{(v_1 v_k)}:=(\cG^{(v_1v_2)})^{(t)}\cdots(\cG^{(v_{k-1}v_k)})^{(t)}
\end{equation} 
as a matrix product of (possibly transposed, depending on the in- and out-degrees) of \(\cG^{(v_1v_2)},\allowbreak \ldots,\allowbreak\cG^{(v_{k-1}v_k)}\), whenever \(d_g(v_2)=\cdots=d_g(v_{k-1})=2\). For each edge \(e\in E_g^\mathrm{red}\) we record the number of \(\Im G\)'s, the total number of \(G\)-edges and the number of summed up \(V_\mathrm{o}^{t/0\mathrm{tr}}\)-vertices in the corresponding chains and cycles by \(i(e),l(e),t(e),a(e)\), respectively and set \(o(e):=a(e)+t(e)\). The letter \(o\) refers to the counting of vertices with the asymptotic orthogonality effect. Note that for cycles all \(V_\mathrm{o}^{t/0\mathrm{tr}}\)-vertices in the cycle contribute towards \(t(e),a(e)\) while for chains the first and last vertex necessarily are in \(V_\kappa^{\ge 3}\cup V_\mathrm{e}\) and hence, by definition cannot be \(V_\mathrm{o}^{t/0\mathrm{tr}}\)-vertices. Thus the parameters \(a,t,i,l\) satisfy the relations 
\begin{equation}\label{ali relations}
  1\le l(e), \quad 0 \le i(e)\le l(e),\quad 0\le a(e)+ t(e)=o(e)\le \begin{cases}
    l(e), & e\in E_g^{\red,\cyc},\\ 
    l(e)-1,& e\in E_g^{\red}\setminus E_g^{\red,\cyc}.
  \end{cases} 
\end{equation}
We denote the set of \(v\in V_\mathrm{cyc}\) with \(o((vv))=k\) by \(V_\mathrm{cyc}^{o=k}\) which are of cardinality \(\abs{V_\mathrm{cyc}^{o=k}}=n_\mathrm{cyc}^{o=k}\), c.f.~\ref{Va Vcyc sum claim}. 
\begin{lemma}\label{lemma reduction}
  For each av-/iso-graph \(\Gamma\in \cG\) with parameters \(a, t, l, i,p\) and the selected vertex sets \(V_\mathrm{o}^{0\mathrm{tr}},V_\mathrm{o}^{t}\), let \(\Gamma_\mathrm{red}=(V_\kappa^{\ge 3}\cup V_\mathrm{e}\cup V_\mathrm{cyc},E_g^\mathrm{red}\cup E_\kappa^{\ge 3})\) denote its reduction. The reduced graph then satisfies
  \begin{equation}\label{number edges Egred}
    \abs{E_g^\mathrm{red}} = \abs{E_g} - \abs{V_\mathrm{i}} -\abs{V_\kappa^2} + \abs{V_\mathrm{cyc}}=\abs{E_g} - \abs{V_\mathrm{i}} -2\abs{E_\kappa^2} + \abs{V_\mathrm{cyc}}
  \end{equation}
  and 
  \[\Val(\Gamma_\red)=\Val(\Gamma).\] 
  Moreover, we have
  \begin{equation}\label{red e a i rel} 
    \sum_{e\in E_g^\mathrm{red}} t(e) = \abs{V_\mathrm{o}^t}, \quad\sum_{e\in E_g^\mathrm{red}} a(e) = \abs{V_\mathrm{o}^{0\mathrm{tr}}}, \quad \sum_{e\in E_g^\mathrm{red}} l(e)=\abs{E_g}, \quad \sum_{e\in E_g^\mathrm{red}} i(e)=2ip.
  \end{equation}
\end{lemma}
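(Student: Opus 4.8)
The plan is to carry out the reduction one degree-two vertex at a time and then collect the bookkeeping. First I would record two structural facts from Proposition~\ref{cumulant expansion prop}: by~\ref{perfect matching} no \(\kappa\)-edge is incident to an internal vertex, and every edge of \(E_\kappa^2\) is, by~\eqref{kappa mat def0}, represented by one of the constant matrices with all entries equal to \(1\) (the \(\kappa^{1,1}\) case) or all entries equal to \(\sigma\) (the \(\kappa^{2,0}\) case). Consequently, for every degree-two vertex \(v\in V_\mathrm{i}\cup V_\kappa^2\) the summation index \(a_v\) occurs in exactly two \(\cG\)-factors of~\eqref{val def} and enters no \(\kappa\)-matrix in an index-dependent way; the adjacent \(E_\kappa^2\)-edge, if any, contributes only a scalar in \(\set{1,\sigma}\) of modulus \(\le 1\) together with the factor \(N^{-d_g(u)/2}=N^{-1}\). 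These scalars are harmless and I would absorb them into a neighbouring \(\cG\)-factor (or simply suppress them), so that from then on each \(E_\kappa^2\)-edge is accounted for solely through the explicit prefactor \(N^{-\abs{E_\kappa^2}}\).

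Next I would perform the partial resummation. Viewing the \(G\)-edges of \(\Gamma\) as an unoriented graph, vertices in \(V_\mathrm{i}\cup V_\kappa^2\) have degree \(2\), those in \(V_\mathrm{e}\) degree \(1\), and those in \(V_\kappa^{\ge 3}\) degree \(\ge 3\); hence the \(G\)-edge components are either \emph{chains} joining two (possibly coinciding) vertices of \(V_\kappa^{\ge3}\cup V_\mathrm{e}\) through a string of degree-two vertices, or \emph{cycles} lying entirely on \(V_\mathrm{i}\cup V_\kappa^2\) — exactly the dichotomy behind~\eqref{G chain def} and the definition of \(V_\mathrm{cyc}\), with property~\ref{no loops} guaranteeing that every such cycle has length at least two. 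Summing out a single degree-two vertex \(v\) merges its two incident \(\cG\)-factors into one matrix product, a transpose being applied to one factor in the two-incoming and two-outgoing cases so that \(a_v\) becomes an interior matched index, e.g.\ \(\sum_{a_v}\cG^{e_1}_{\vx_u a_v}(\cG^{e_2})^t_{a_v\vx_w}=(\cG^{e_1}(\cG^{e_2})^t)_{\vx_u\vx_w}\) when \(e_1,e_2\) both point into \(v\); this is precisely the operation encoded by the \((t)\)-superscripts in~\eqref{G chain def}. Iterating over all of \(V_\mathrm{i}\cup V_\kappa^2\) collapses a chain with \(m\) interior degree-two vertices (hence \(m+1\) \(G\)-edges) to one edge carrying the product \(\cG^{(\,\cdot\,\cdot)}\), and a cycle with \(m\) degree-two vertices (hence \(m\) \(G\)-edges) to one loop edge carrying \(\Tr\cG^{(vv)}=N\braket{\cG^{(vv)}}\). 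The remaining sum runs over \(V_\kappa^{\ge3}\), the \(E_\kappa^{\ge3}\)-factors are untouched, each of the \(\abs{V_\mathrm{cyc}}\) cycles produces \(\braket{\cG^{(vv)}}\) together with one factor \(N\), and the overall prefactor becomes \(N^{-\abs{E_\kappa^2}}\cdot N^{\abs{V_\mathrm{cyc}}}\) — which is exactly~\eqref{Val reduced def}, giving \(\Val(\Gamma)=\Val(\Gamma_\mathrm{red})\).

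For the edge count I would argue by conservation: every \(G\)-edge and every degree-two vertex of \(\Gamma\) lies on exactly one chain or cycle. If \(P,C\) denote the chains and cycles and \(m_\cdot\) the number of interior degree-two vertices of each, then \(\sum_{p\in P}m_p+\sum_{c\in C}m_c=\abs{V_\mathrm{i}}+\abs{V_\kappa^2}\), while the number of \(G\)-edges removed is \(\sum_{p\in P}m_p+\sum_{c\in C}(m_c-1)=\abs{V_\mathrm{i}}+\abs{V_\kappa^2}-\abs{C}=\abs{V_\mathrm{i}}+\abs{V_\kappa^2}-\abs{V_\mathrm{cyc}}\); hence \(\abs{E_g^\mathrm{red}}=\abs{E_g}-\abs{V_\mathrm{i}}-\abs{V_\kappa^2}+\abs{V_\mathrm{cyc}}\), and \(\abs{V_\kappa^2}=2\abs{E_\kappa^2}\) from~\ref{perfect matching} yields the second form of~\eqref{number edges Egred}. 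The relations~\eqref{red e a i rel} follow from the same "each object on exactly one chain/cycle" principle: summing \(l(e)\), \(i(e)\), \(t(e)\), \(a(e)\) over \(e\in E_g^\mathrm{red}\) recounts, respectively, all \(G\)-edges (so \(\abs{E_g}\)), all \(\Im G\)-edges (so \(2ip\) by~\eqref{imG count}), all selected \(t\)-vertices (so \(\abs{V_\mathrm{o}^t}\)) and all selected \(0\mathrm{tr}\)-vertices (so \(\abs{V_\mathrm{o}^{0\mathrm{tr}}}\)), using that \(V_\mathrm{o}^t,V_\mathrm{o}^{0\mathrm{tr}}\subset V_\mathrm{i}\cup V_\kappa^2\) by construction, that cycles contribute through all of their vertices, and that chains contribute only through their interior vertices since their endpoints lie in \(V_\kappa^{\ge3}\cup V_\mathrm{e}\).

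The only genuinely delicate points are the transpose bookkeeping in the resummation — which is why~\eqref{G chain def} carries the \((t)\)-superscripts and must be matched against the in/out-degree pattern at each collapsed vertex — and the verification that cycles never secretly pass through a \(V_\kappa^{\ge3}\) or external vertex; the latter is immediate since such vertices have \(G\)-degree \(\ne 2\), with~\ref{no loops} additionally ruling out degenerate short \(\kappa^2\)-cycles. Everything else is linear bookkeeping, so I do not expect a real obstacle here.
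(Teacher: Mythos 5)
Your proposal is correct and follows essentially the same route as the paper: collapse the degree-two vertices (using that \(E_\kappa^2\)-edges represent constant matrices and that internal vertices carry no \(\kappa\)-edge), track transposes via the in/out-degree pattern, and obtain~\eqref{number edges Egred} and~\eqref{red e a i rel} by counting edges and selected vertices along the maximal chains and cycles. Your explicit chain/cycle tally \(\sum_p m_p+\sum_c(m_c-1)\) just spells out what the paper dismisses as trivial, and the rest matches the paper's argument.
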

Second, we estimate the value of each graph by bounding the size of each of the reduced \(G\)-edges entrywise and the summations trivially. 
\begin{lemma}\label{lemma Itwo}
  For each av-/iso-graph \(\Gamma\in\cG\) with the selected vertex sets \(V_\mathrm{o}^{0\mathrm{tr}},V_\mathrm{o}^{t}\) we have \(\abs{\Val(\Gamma_\red)}\prec \ItwoEst(\Gamma)\) with 
  \begin{equation}\label{Itwo eq def}
    \begin{split}
      \ItwoEst(\Gamma) &:= \Lambda_+^{\abs{V_\mathrm{o}^{0\mathrm{tr}}}}\Pi_+^{\abs{V_\mathrm{o}^t}} \rho^{2ip\vee(\abs{V_\mathrm{i}}+2\abs{E_\kappa^2}-\abs{V_\mathrm{o}})} N^{\abs{V_\mathrm{i}}+\abs{E_\kappa^2}+\abs{E_\kappa^3}/2-\abs{V_\mathrm{o}}/2-\delta^{\ge 4}}  \\
      &\quad \times K^{\abs{V_\mathrm{o}}-\abs{V_\mathrm{i}}-2\abs{E_\kappa^2}+\abs{V_\mathrm{cyc}^{o=0}}+\abs{V_\mathrm{cyc}^{o=1}}/2},
    \end{split}
  \end{equation}
  where 
  \[\delta^{\ge 4}:=\sum_{e\in E_\kappa}\Bigl(\frac{d_g(e)}{2}-2\Bigr)_+\]
\end{lemma}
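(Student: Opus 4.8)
The plan is to bound $\abs{\Val(\Gamma_\red)}$ directly from the explicit reduced‑value formula~\eqref{Val reduced def}: estimate the cumulant matrix entries by $\landauO{1}$, estimate each $G$-cycle trace $\braket{\cG^{(vv)}}$ and each $G$-chain entry $\cG^{(uv)}_{\vx_u\vx_v}$ by a power‑counting bound, and then perform the remaining $\abs{V_\kappa^{\ge3}}$ summations over $[N]$ trivially, each contributing at most a factor $N$.

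First I would record the per‑edge estimates. For a reduced $G$-edge $e$ (chain or cycle) carrying $l(e)$ resolvents of which $i(e)$ are imaginary parts, and absorbing $a(e)$ collapsed $0\mathrm{tr}$-vertices and $t(e)$ collapsed $t$-vertices (set $o(e):=a(e)+t(e)$), the claim is that for $e\in E_g^\red\setminus E_g^{\red,\cyc}$
\[
\abs{\cG^{(uv)}_{\vx_u\vx_v}} \prec \rho^{\,i(e)\vee(l(e)-1-o(e))}\,N^{\,l(e)-1-o(e)/2}\,K^{\,o(e)-l(e)+1}\,\Lambda_+^{a(e)}\,\Pi_+^{t(e)},
\]
while for a cycle $e$ one has the same bound with $l(e)-1$ replaced by $l(e)$ in the $\rho$-exponent and with $K^{o(e)-l(e)+1}$ replaced by $K^{o(e)-l(e)+\mathbf1(o(e)=0)+\mathbf1(o(e)=1)/2}$. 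I would prove these by iterated Cauchy-Schwarz: split a chain at a collapsed orthogonality vertex whenever one is present, producing two shorter sub‑chains multiplied by their adjoints, and close each pairing through the Ward identity $GG^\ast=\Im G/\eta$; at a split anchored at a $0\mathrm{tr}$- (resp.\ $t$-) vertex the resulting degree-$\le2$ factor is estimated by the degree‑two lemma (Lemma~\ref{degree two lemma}), which is precisely where one full factor $\Lambda_+$ (resp.\ $\Pi_+$) is extracted, via comparison with $\Xi_J^A$ (resp.\ $\bar\Xi_J^I$), cf.\ Lemma~\ref{lem:compa}; chains or cycles of length $\le2$ with no orthogonality vertex are covered directly by the single‑resolvent laws~\eqref{eq:oldlocal},~\eqref{eq:avll1G}. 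The $\rho$-exponent is a maximum because each $\Im G$ forces at least one factor $\rho$, while the Ward closings contribute their own $\rho$'s, whichever count is larger.

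Second I would assemble. Inserting the per‑edge estimates and $\abs{\kappa^{(uv)}_{a_u a_v}}\lesssim1$ (from~\eqref{eq:momentass}) into~\eqref{Val reduced def}, the explicit prefactors give $N^{-\abs{E_\kappa^2}+\abs{V_\mathrm{cyc}}}\prod_{e\in E_\kappa^{\ge3}}N^{-d_g(e)/2}$ and the free summations give $N^{\abs{V_\kappa^{\ge3}}}$. Using $\abs{V_\kappa^{\ge3}}=2\abs{E_\kappa^{\ge3}}$ (Property~\ref{perfect matching}), $\sum_{e\in E_\kappa^{\ge3}}d_g(e)/2 = 2\abs{E_\kappa^{\ge3}}-\abs{E_\kappa^3}/2+\delta^{\ge4}$ (from the definition of $\delta^{\ge4}$ together with Property~\ref{degree of kappa vertices}), the summation identities of Lemma~\ref{lemma reduction} ($\sum_e l(e)=\abs{E_g}$, $\sum_e i(e)=2ip$, $\sum_e a(e)=\abs{V_\mathrm{o}^{0\mathrm{tr}}}$, $\sum_e t(e)=\abs{V_\mathrm{o}^t}$), and $\abs{E_g^\red}=\abs{E_g}-\abs{V_\mathrm{i}}-2\abs{E_\kappa^2}+\abs{V_\mathrm{cyc}}$, I would collect the total exponents and rewrite $\eta^{-1}=N\rho K^{-1}$. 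The $\Lambda_+$-, $\Pi_+$- and $K$-exponents then match those of $\ItwoEst(\Gamma)$ by additivity of the per‑edge exponents, while the $\rho$-exponent matches by superadditivity of $\vee$, i.e.\ $\prod_e\rho^{i(e)\vee\cdots}\le\rho^{2ip\vee(\abs{V_\mathrm{i}}+2\abs{E_\kappa^2}-\abs{V_\mathrm{o}})}$ since $\rho\le1$; the remaining factor $N^{\abs{E_\kappa^3}/2-\delta^{\ge4}}$ is the net imbalance between the high‑order cumulant normalizations $N^{-d_g(e)/2}$ and their $2\abs{E_\kappa^{\ge3}}$ accompanying free summations -- a surplus for degree‑$3$ cumulants and a deficit for degree‑$\ge5$ ones, with exact cancellation at degree $4$. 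The iso‑graph case is handled identically using $\abs{V_\mathrm{e}}=4p$, $\abs{V_\mathrm{i}}=2p(l-2)$ and Properties~\ref{number of external}--\ref{no internal}.

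The main obstacle is the per‑edge chain/cycle estimate, and inside it the extraction of a genuine factor $\Lambda_+$ (resp.\ $\Pi_+$), together with the accompanying $\sqrt N$- and $\eta$-bookkeeping, for \emph{every} collapsed orthogonality vertex along an arbitrarily long chain. One must arrange the Cauchy-Schwarz splitting so it can be anchored at the orthogonality vertices wherever they sit -- including several clustered together and chains with no symmetric split point -- control the two chain endpoints, which always lie in $V_\kappa^{\ge3}\cup V_\mathrm{e}$ and hence never produce a gain, and keep the $\rho$-exponent maximal and uniform, all while $\Lambda_+$ and $\Pi_+$ are only trivially bounded at this stage, not yet known to be $\landauO1$.
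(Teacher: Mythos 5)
Your proposal follows essentially the same route as the paper: the per-edge chain/cycle estimates you state are exactly those of Lemma~\ref{degree two lemma} (your $\rho$-exponents are the slightly weakened versions $i(e)\vee(l(e)-o(e)-\bm1(e\notin E_g^{\red,\cyc}))$ to which the paper reduces anyway in~\eqref{counting of rhos I two}, so they still suffice), and the assembly via $\abs{\kappa^{(uv)}_{ab}}\lesssim1$, the net cumulant normalization $N^{\abs{E_\kappa^3}/2-\delta^{\ge4}}$, and the counting identities of Lemma~\ref{lemma reduction} and~\eqref{number edges Egred} is precisely the paper's~\eqref{eq val entrywise bd}. The bookkeeping is correct and the hard part you flag (extracting $\Lambda_+$/$\Pi_+$ per orthogonality vertex via spectral decomposition, local averaging and the $XYZ$-type Cauchy--Schwarz) is exactly where the paper's proof of Lemma~\ref{degree two lemma} puts the work.
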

Finally, in the third step we improve upon the entrywise estimate as by estimating summations corresponding to some \(V_\kappa^{\ge3}\)-vertices more effectively, using a Schwarz inequality followed by the Ward identity \(GG^\ast= \Im G/\eta\). 
\begin{lemma}\label{lemma Ithree}
  For each av-graph \(\Gamma\in\cG\) with the selected vertex sets \(V_\mathrm{o}^{0\mathrm{tr}},V_\mathrm{o}^{0\mathrm{tr}}\)  we have \(\abs{\Val(\Gamma_\red)}\prec \IthreeEst(\Gamma)\) with 
  \begin{subequations}
    \begin{equation}\label{deg 3 lemma av case}
      \begin{split}
        \IthreeEst(\Gamma) &:= \Lambda_+^{\abs{V_\mathrm{o}^{0\mathrm{tr}}}} \Pi_+^{\abs{V_\mathrm{o}^t}}\rho^{2ip\vee(\abs{V_\mathrm{i}}+2\abs{E_\kappa^2}+\abs{E_\kappa^3}-\abs{V_\mathrm{o}})} N^{\abs{V_\mathrm{i}}+\abs{E_\kappa^2}+\abs{E_\kappa^3}/2-\abs{V_\mathrm{o}}/2-\delta^{\ge 4}} \\
        &\quad \times K^{\abs{V_\mathrm{o}}-\abs{V_\mathrm{i}}-2\abs{E_\kappa^2}-\abs{E_\kappa^3}/2+\abs{V_\mathrm{cyc}^{o=0}}+\abs{V_\mathrm{cyc}^{o=1}}/2}
      \end{split}
    \end{equation}
    and for each iso-graph \(\Gamma\in\cG\) we have \(\abs{\Val(\Gamma_\red)}\prec \IthreeEst(\Gamma)\) with
    \begin{equation}\label{deg 3 lemma iso case}
      \begin{split}
        \IthreeEst(\Gamma) &:= \Lambda_+^{\abs{V_\mathrm{o}^{0\mathrm{tr}}}}\Pi_+^{\abs{V_\mathrm{o}^t}} \rho^{2ip\vee(\abs{V_\mathrm{i}}+2\abs{E_\kappa^2}+\abs{E_\kappa^3}-\abs{V_\mathrm{o}})} N^{\abs{V_\mathrm{i}}+\abs{E_\kappa^2}+\abs{E_\kappa^3}/2-\abs{V_\mathrm{o}}/2-\delta^{\ge 4}} \\
        &\quad \times K^{\abs{V_\mathrm{o}}-\abs{V_\mathrm{i}}-2\abs{E_\kappa^2}-\abs{E_\kappa^3}/2+\abs{V_\mathrm{cyc}^{o=0}}+\abs{V_\mathrm{cyc}^{o=1}}/2-\bigl(p-\abs{E_\kappa^2}+\abs{V_\cyc}-\delta^{\ge 4}\bigr)_+}
      \end{split}
    \end{equation}
  \end{subequations}
\end{lemma}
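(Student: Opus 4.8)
The plan is to refine the crude entrywise bound $\ItwoEst(\Gamma)$ of Lemma~\ref{lemma Itwo} by carrying out the index summations attached to the higher-degree cumulant vertices $V_\kappa^{\ge 3}$ — and, in the isotropic case, also the resolvent chains terminating at the external vertices — via Cauchy--Schwarz followed by the Ward identity $G(z)G(z)^\ast=\Im G(z)/\eta$, rather than bounding every resolvent entry by its worst-case off-diagonal size $\sqrt{\rho/(N\eta)}=\rho K^{-1/2}$ and summing all of $V_\mathrm{i}\cup V_\kappa$ freely. First I would start from the reduced-graph representation $\Val(\Gamma)=\Val(\Gamma_\red)$ of Lemma~\ref{lemma reduction}, so that the only summations left are over $V_\kappa^{\ge 3}$ and the building blocks are the cycle traces $\braket{\cG^{(vv)}}$, the chain entries $\cG^{(uv)}_{\vx_u\vx_v}$, and the cumulant matrices $\kappa^{(uv)}$ of the edges $(uv)\in E_\kappa^{\ge3}$, each carrying a weight $N^{-d_g((uv))/2}$.

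For the averaged bound~\eqref{deg 3 lemma av case} the key observation is that Lemma~\ref{lemma Itwo} treats \emph{both} endpoint summations of each third-order cumulant edge $(uv)\in E_\kappa^3$ trivially, which against the weight $N^{-3/2}$ produces the factor $N^{\abs{E_\kappa^3}/2}$ and the comparatively weak $K$-power $K^{-\abs{V_\mathrm{i}}-2\abs{E_\kappa^2}+\dots}$. Instead, at one endpoint of each such edge — say $u$, which has three incident reduced $G$-chains — I would group two of the incident chains and apply Cauchy--Schwarz in the summation index $a_u$; since a single derivative acting on a resolvent $G(z)$ yields $G(z)\Delta G(z)$ with a \emph{matching} spectral parameter and transposition status, the two grouped chains can always be arranged so that the split closes through $\bigl(\cdots G(z)G(z)^\ast\cdots\bigr)_{\vx_w\vx_w}=\bigl(\cdots\Im G(z)\cdots\bigr)_{\vx_w\vx_w}/\eta$. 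Recording this as replacing two free index summations and two worst-case entry factors by one Ward pairing yields a net improvement of $\rho^{\abs{E_\kappa^3}}K^{-\abs{E_\kappa^3}/2}=(\rho/(N\eta))^{\abs{E_\kappa^3}/2}$, which is exactly the gap between $\IthreeEst(\Gamma)$ and $\ItwoEst(\Gamma)$: the $\rho$-exponent rises from $\abs{V_\mathrm{i}}+2\abs{E_\kappa^2}-\abs{V_\mathrm{o}}$ to $\abs{V_\mathrm{i}}+2\abs{E_\kappa^2}+\abs{E_\kappa^3}-\abs{V_\mathrm{o}}$ (the outer $\max$ with $2ip$ being unchanged), and the $K$-exponent drops by $\abs{E_\kappa^3}/2$. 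For $E_\kappa^{\ge4}$-edges the surplus smallness is already recorded by $\delta^{\ge4}$, so there one only has to check that the Schwarz grouping is still \emph{available}, not that it improves anything; and the cycle traces, the counters $\Lambda_+^{\abs{V_\mathrm{o}^{0\mathrm{tr}}}}$, $\Pi_+^{\abs{V_\mathrm{o}^t}}$ and the factor $\rho^{2ip}$ are untouched, since the orthogonality vertices lie in $V_\kappa^2\cup V_\mathrm{i}$, disjoint from $V_\kappa^{\ge3}$, and the rearrangement preserves every $\Im G$, every traceless $A$ and every $G$/$G^t$ adjacency already fixed in Lemma~\ref{lemma Itwo}.

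For the isotropic bound~\eqref{deg 3 lemma iso case}, besides the $\abs{E_\kappa^3}$-improvement above — the $\rho$- and $K$-coefficients in~\eqref{deg 3 lemma av case} and~\eqref{deg 3 lemma iso case} coincide apart from the extra term — one exploits the $4p$ external vertices: a reduced $G$-chain joining two external endpoints $\vx_u,\vx_v$ with $\norm{\vx_u}+\norm{\vx_v}\lesssim1$ can be shortened by an iterated Cauchy--Schwarz and Ward argument, each fold replacing a resolvent entry by a diagonal factor $\braket{\vx,\Im G\vx}/\eta$ and thereby saving one power of $K=N\eta\rho$ relative to the entrywise treatment. Counting the number of surplus folds that any iso-graph admits gives precisely $\bigl(p-\abs{E_\kappa^2}+\abs{V_\cyc}-\delta^{\ge4}\bigr)_+$, which is the extra $K^{-(\,\cdot\,)_+}$ factor in~\eqref{deg 3 lemma iso case}; the truncation at $0$ reflects that no fold is gained once every external-to-external chain is already as short as the graph forces it to be.

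The step I expect to be the main obstacle is the \emph{global, simultaneous} accounting: showing that the Ward improvements at the distinct $V_\kappa^{\ge3}$-vertices and, in the iso case, along the external chains can be performed without ever reusing an edge or an index; that the spectral-parameter and transposition matching always permits the required Schwarz grouping even when a loop edge or a short cycle is incident to a $V_\kappa^{\ge3}$-vertex; and that after all foldings the exponents of $N$, $K=N\eta\rho$ and $\rho$ collapse to exactly the formulas~\eqref{deg 3 lemma av case}--\eqref{deg 3 lemma iso case}, in particular the iso count $\bigl(p-\abs{E_\kappa^2}+\abs{V_\cyc}-\delta^{\ge4}\bigr)_+$, which amounts to an Euler-type identity relating the number of external-to-external chains, the number of $W$'s consumed by derivatives, and $\abs{V_\cyc}$. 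Once this bookkeeping is settled, the remainder is the routine application of the single-$G$ local laws~\eqref{eq:oldlocal} and the degree-two estimates of Lemma~\ref{degree two lemma} to the resolvent pieces left after folding.
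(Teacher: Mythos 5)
Your core mechanism for the averaged bound is the same as the paper's: at the summation vertices of the third-order cumulant edges one replaces the trivial entrywise bound by a Cauchy--Schwarz in the summation index followed by the Ward identity, gaining $K^{-1/2}$ per edge relative to $\ItwoiEst$ and $\rho K^{-1/2}$ relative to $\ItwozEst$, and the exponent bookkeeping you give for $\abs{E_\kappa^3}$ matches \eqref{deg 3 lemma av case}. However, the step you flag as ``the main obstacle'' --- performing these gains simultaneously at all $V_\kappa^{\ge3}$-vertices without reusing an edge --- is not a technicality to be deferred; it is the actual content of the lemma, and the paper resolves it with a specific device you do not supply. One introduces the notion of a \emph{Wardable} edge set (every subgraph on $V_\mathrm{e}\cup V_\kappa^{\ge3}$ has a $V_\kappa^{\ge3}$-vertex of degree $\le 2$), proves via Lemma~\ref{lemma degen} that such a set admits an elimination ordering $v_1,v_2,\ldots$ along which each vertex has at most two Wardable edges left when its summation is estimated by \eqref{ward 1 gain}--\eqref{ward 2 gain}, and then produces a Wardable set of size $\ge\abs{E_\kappa^3}$ by the counting inequality \eqref{number V3 adj} (each $V_\kappa^3$-vertex has three incident reduced edges, and discarding at most one per vertex to avoid loops leaves enough). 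Without this degeneracy argument your ``group two of the three incident chains at $u$'' can fail globally: the chain you split off at $u$ may terminate at another degree-3 vertex $w$ whose own summation you also want to treat by Cauchy--Schwarz, and nothing in your proposal guarantees the two estimates are compatible.

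For the isotropic case the extra factor $K^{-(p-\abs{E_\kappa^2}+\abs{V_\cyc}-\delta^{\ge4})_+}$ does \emph{not} come from ``folding'' external-to-external chains: such a chain is a single reduced edge $\cG^e_{\vx_u\vx_v}$ carrying no residual summation, and it is already bounded optimally by \eqref{Improved 2 reduced edge est iso}, so there is nothing further to gain from it. The correct mechanism is again Ward gains at $V_\kappa^{\ge3}$-summations: the external vertices matter only because they limit how many reduced edges can avoid being adjacent to $V_\kappa^{\ge3}$. Concretely, \eqref{number Vge3 adj} shows that the number of edges adjacent to $V_\kappa^{\ge3}$ is at least $\sum_{e\in E_\kappa^{\ge3}}d_g(e)+2p-2(\abs{E_\kappa^2}-\abs{V_\cyc})$, using that by~\ref{number of external} and~\ref{no loops} the external-to-external edges number at most $2(\abs{E_\kappa^2}-\abs{V_\cyc})$; pruning at most $2(k-2)$ edges per $V_\kappa^k$-vertex to reach a Wardable set then yields the count \eqref{Second Ward}, whose halving gives exactly the exponent $(p-\abs{E_\kappa^2}+\abs{V_\cyc}-\delta^{\ge4})_+$. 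So while your final formulas are right, the argument you sketch for the isotropic surplus would not produce them, and the combinatorial identity you hope for is an edge count on the reduced graph, not an Euler-type relation about chain lengths.
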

Before proving Lemmata~\ref{lemma reduction}--\ref{lemma Ithree} we conclude the proof of Proposition~\ref{prop value est}. 

\begin{proof}[Proof of Proposition~\ref{prop value est}]
  The proof of Proposition~\ref{prop value est} distinguishes several cases. For the averaged bound we consider the two cases \(a=t=0\) and \(a+t=o>0, \abs{V_\mathrm{i}\cap V_\mathrm{o}}=2(o-1)p\) separately, with the remaining case \(o>0,\abs{V_\mathrm{i}}=2op\) being discussed in Section~\ref{sec a last case}, while for the isotropic bound we consider the cases \(o\ge 0, \abs{V_\mathrm{i}\cap V_\mathrm{o}}=2op\) and \(o>0, \abs{V_\mathrm{i}\cap V_\mathrm{o}}=2(o-1)p\) separately. 
  
  We first consider the \(o=0\) case of the averaged bound where we obtain from Lemma~\ref{lemma reduction},~\eqref{deg 3 lemma av case} with \(V_\mathrm{o}=\emptyset\) from~\ref{a rule}, \(\abs{V_\mathrm{cyc}}\le \abs{E_\kappa^2}\) from~\ref{Va Vcyc sum claim} and \(\abs{V_\mathrm{i}}=2p(l-1)\) from~\ref{number of internal vertices} that 
  \[ 
  \begin{split}
    \abs{\Val(\Gamma)} &\prec \rho^{ \abs{V_\mathrm{i}}+2\abs{E_\kappa^2}+\abs{E_\kappa^3}} N^{\abs{V_\mathrm{i}}+\abs{E_\kappa^2}+\abs{E_\kappa^3}/2} K^{-\abs{V_\mathrm{i}} -\abs{E_\kappa^2} -\abs{E_\kappa^3}/2 }\\
    & =\rho^{2(l-1)p+2\abs{E_\kappa^2}+\abs{E_\kappa^3}} N^{2lp} K^{-2p(l-1)} N^{-2p+\abs{E_\kappa^2}+\abs{E_\kappa^3}/2} K^{-\abs{E_\kappa^2} -\abs{E_\kappa^3}/2 } \\
    &\lesssim \rho^{2p(l+1)} N^{2lp} K^{-2lp},
  \end{split} \]
  where in the last step we used \(K\lesssim N\rho^2\) due to \(\eta=\min_k\eta_k\lesssim\max_k\rho_k=\rho\) and \(\abs{E_\kappa^2}+\abs{E_\kappa^3}/2\le \abs{E_\kappa}\le 2p\) from~\ref{number of kappa edges} and~\ref{degree of kappa vertices}. 
  
  Next, we consider the \(\abs{V_\mathrm{o}\cap V_\mathrm{i}}=2op\) case of isotropic bound, where we obtain from Lemma~\ref{lemma reduction},~\eqref{deg 3 lemma iso case}, and \(\abs{V_\mathrm{i}}=2p(o+b-1)\) from~\ref{no internal} that 
  \[ \begin{split}
    \abs{\Val(\Gamma)} & \prec \Lambda_+^{2ap}\Pi_+^{2tp}\rho^{2ip\vee(2p(b-1) + 2 \abs{E_\kappa^2} + \abs{E_\kappa^3}) } N^{p(o+2b)} N^{\abs{E_\kappa^2}+\abs{E_\kappa^3}/2-2p-\delta^{\ge 4}} \\
    &\qquad \times K^{p(1-2b)-\abs{E_\kappa^2}-\abs{E_\kappa^3}/2 + \delta^{\ge 4}} \\
    &\lesssim \Lambda_+^{2ap}\Pi_+^{2tp} \rho^{2ip\vee 2p(b+1)} N^{p(o+2b)} K^{-p(1+2b)} 
  \end{split} \]
  again using \(K\lesssim N\rho^2\) and \(\abs{E_\kappa^2}+\abs{E_\kappa^3}/2\le \abs{E_\kappa}\le 2p\).
  
  Next, we consider the \(\abs{V_\mathrm{o}\cap V_\mathrm{i}}=2(o-1)p, o>0\) case of both the averaged bound, and the isotropic bound where we similarly obtain (from estimating \((\ldots)_+\ge 0\) for the iso-graphs)
  \begin{equation}\label{o-1 case} \begin{split}
    \abs{\Val(\Gamma)} &\prec \Lambda_+^{\abs{V_\mathrm{o}^{0\mathrm{tr}}}}\Pi_+^{\abs{V_\mathrm{o}^t}} \rho^{i'} N^{\abs{V_\mathrm{i}}+\abs{E_\kappa^2}+\abs{E_\kappa^3}/2-\abs{V_\mathrm{o}}/2} K^{\abs{V_\mathrm{o}}-\abs{V_\mathrm{i}} -2\abs{E_\kappa^2} -\abs{E_\kappa^3}/2 +\abs{V_\mathrm{cyc}^{o=0}}+\abs{V_\mathrm{cyc}^{o=1}}/2}\\
    & = \Lambda_+^{\abs{V_\mathrm{o}^{0\mathrm{tr}}}} \Pi_+^{\abs{V_\mathrm{o}^t}} \rho^{i'} N^{p(o+2b-1)+\abs{E_\kappa^2}+\abs{E_\kappa^3}/2-\abs{V_\mathrm{o}\cap V_\kappa}/2} \\
    &\qquad \times K^{\abs{V_\mathrm{o}\cap V_\kappa}-2pb -2\abs{E_\kappa^2} -\abs{E_\kappa^3}/2 +\abs{V_\mathrm{cyc}^{o=0}}+\abs{V_\mathrm{cyc}^{o=1}}/2}\\
    & \le \Lambda_+^{2pa}\Pi_+^{2pt}\rho^{i'} N^{p(o+2b)} K^{-(2b+1)p } \Bigl(\frac{K}{N}\Bigr)^{p+\abs{V_\mathrm{o}\cap V_\kappa}/2 -\abs{E_\kappa^2} -\abs{E_\kappa^3}/2} \\
    &\lesssim \Lambda_+^{2pa}\Pi_+^{2pt} \rho^{2ip\vee 2(b+1)p} N^{p(o+2b)} K^{-(2b+1)p },
  \end{split} \end{equation}
  with 
  \[ \begin{split}
    i' &= 2ip\vee(\abs{V_\mathrm{i}}+2\abs{E_\kappa^2}+\abs{E_\kappa^3}-\abs{V_\mathrm{o}})= 2ip \vee\Bigl( 2bp + 2\abs{E_\kappa^2}+\abs{E_\kappa^3} - \abs{V_\mathrm{o}\cap V_\kappa} \Bigr).
  \end{split} \]
  Here we used~\ref{number of G edges} and~\ref{number of internal vertices}/\ref{no internal} and \(V_\mathrm{o}\subset V_\mathrm{i}\cup V_\kappa^2\) (since by definition \(V_\mathrm{o}\) are degree-\(2\) vertices, while \(V_\mathrm{e}=\emptyset\) due to~\ref{no external} in the averaged case and \(d_g(v)=1,v\in V_\mathrm{e}\) due to~\ref{number of external} in the isotropic case and \(V_\kappa^{\ge 3}\)-vertices have degree at least \(3\) by~\ref{degree of kappa vertices}) in the equality. Furthermore, we used~\ref{Va Vcyc sum claim} in the first inequality, and~\ref{a rule} in the second inequality, and \(K/N\lesssim\rho^2\) and~\ref{a rule} in the final step.  
\end{proof}

\subsubsection{Graph reduction: Proof of Lemma~\ref{lemma reduction}}
Since for \(d_g((uv))=2\) we have \(\kappa^{(uv)}_{ab}=1\) or \(\kappa^{(uv)}_{ab}=\sigma\) for all \(a,b\) due to~\eqref{kappa mat def0} (using Assumption~\ref{assump w2}) it is possible to write (with potential transpositions) the summation over \(a_v\) for \(v\in V_\mathrm{i}\cup V_\kappa^2\) as matrix products which are then associated with edges of the reduced graph \(\Gamma_\mathrm{red}\). In this way \(G\)-chains \((v_1v_2),\dots,(v_{k-1}v_k)\in E_g\) with \(v_2,\ldots,v_{k-1}\in V_\kappa^2\cup V_\mathrm{i}\) and \(v_1,v_k\not \in V_\kappa^2\cup V_\mathrm{i}\) are reduced to the edge \((v_1v_k)\in E_g^\mathrm{red}\), and \(G\)-cycles \((v_1v_2),\dots,(v_k v_1)\in E_g\) with \(v_1,\ldots,v_k\in V_\kappa^2\cup V_\mathrm{i}\) are reduced to isolated loops which we represent by the vertex \(v_1\in V_\mathrm{cyc}\) and the loop-edge \((v_1v_1)\in E_g^\mathrm{red,cyc}\subset E_g^\mathrm{red}\). For each cycle of length \(k\) we arbitrarily pick one of the \(k\) possible reductions since they are all equivalent. 

The first relation in~\eqref{number edges Egred} follows trivially since for each of the carried out summations corresponding to \(V_\kappa^2\cup V_\mathrm{i}\) the number of \(G\)-edges is reduced by one with the exception that for cycles the last index is kept in \(V_\mathrm{cyc}\). The second relation in~\eqref{number edges Egred} is a direct consequence of~\ref{perfect matching}. Next, the claim~\eqref{red e a i rel} follows from~\ref{number of G edges} and by noting that the definition of \(a(e),t(e)\) is consistent with the counting of \(t/0\mathrm{tr}\)-vertices in \(\Gamma\). This concludes the proof of Lemma~\ref{lemma reduction}.

\subsubsection{Entrywise bound: Proof of Lemma~\ref{lemma Itwo}}
For edges in the reduced graph we use the bound from the following lemma. Note that \(o(e)\le l(e)\) for cycles \(e\) and \(o(e)\le l(e)-1\) for chains \(e\) and therefore 
the exponents of \(K\) below are guaranteed to be non-positive.
\begin{lemma}\label{degree two lemma}
  For \(e\in E_g^{\red,\cyc}\) we have the averaged bound
  \begin{subequations}
    \begin{equation}\label{Improved 2 reduced edge est}
      \abs{\braket{\cG^{e}}} \prec \Lambda_+^{a(e)}\Pi_+^{t(e)} \rho^{i(e)\vee (l(e)-o(e)+\bm 1[0<o(e)<l(e)])} N^{l(e)-\frac{o(e)}{2}-1} K^{o(e)-l(e)+\bm 1(o(e)=0) + \frac{\bm1(o(e)=1)}{2}}
    \end{equation}
    and for \(e\in E_g^{\red}\setminus E_g^{\red,\cyc}\) the isotropic bound
    \begin{equation}\label{Improved 2 reduced edge est iso}
      \abs{\braket{\bm v,\cG^{e} \bm w}} \prec \norm{\bm v} \norm{\bm w} \Lambda_+^{a(e)}\Pi_+^{t(e)} \rho^{i(e)\vee (l(e)-o(e)-\bm1[o(e)=l(e)-1])}  N^{l(e)-\frac{o(e)}{2}-1}  K^{o(e)-l(e)+1}
    \end{equation}
  \end{subequations}
  for any two deterministic vectors \({\bm v}, {\bm w}\). Moreover, the same bounds hold true if within the chain \(\cG\) absolute values of resolvents \(\abs{G(z)}\) appear in addition, to \((\Im G)^{(t)},(G^\ast)^{(t)},(G)^{(t)}\). 
\end{lemma}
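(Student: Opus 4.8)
The plan is to prove the two bounds~\eqref{Improved 2 reduced edge est}--\eqref{Improved 2 reduced edge est iso} simultaneously by induction on the number $l(e)$ of resolvent factors, reducing every chain to the elementary two-resolvent quantities that are, by construction, controlled by $\Lambda_+$ and $\Pi_+$. For the base case $l(e)=1$ the averaged estimate of $\braket{GB}$, $\braket{\Im G\,B}$ or $\braket{\abs{G}\,B}$ follows from the classical single-resolvent local law~\eqref{eq:oldlocal} together with $\braket{\abs{G(z)}}\prec1$ (a consequence of rigidity~\eqref{eq:rig}), and the isotropic estimate from the isotropic part of~\eqref{eq:oldlocal}. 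For $l(e)=2$ the key case is $\braket{\Im G_1 A\Im G_2 A}$ with $\braket{A}=0$ (and its variants with a transpose, or with $\abs{G}$ in place of $\Im G$), for which the bound $\prec\rho_1\rho_2(\Lambda_+^A)^2$ is precisely the content of the definition $\Lambda_L^A=\Xi_L^A+\bar\Xi_L^A$ combined with Lemma~\ref{lem:compa}; likewise $\braket{\Im G_1\Im G_2^t}\prec\rho_1\rho_2\Pi_+^2$ from $\Pi_L=\bar\Xi_L^I$. Here one must pass from the canonical spectral parameters $z(E_k,L)$ of Lemma~\ref{lem:compa} to the actual $z_k$: since $N\eta_k\rho_k\ge L$ one has $\eta_k\ge\eta(\Re z_k,L)$, so the Poisson-semigroup identity $\Im G(E+\ii(\eta+s))=\int P_s(E-x)\,\Im G(x+\ii\eta)\,\dif x$ and the spacing comparison~\eqref{eq:levspaca} exhibit each $\Im G_k$ as a local average of canonical imaginary parts, which suffices as the control quantities are stable under such local averaging.

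For the inductive step I would first use the operator identities $\abs{G(z)}^2=G(z)G(z)^\ast=\Im G(z)/\eta$ to replace absolute values by imaginary parts, and then break a long chain by Cauchy--Schwarz: a cyclic monomial via $\abs{\braket{M_1M_2}}\le\braket{M_1M_1^\ast}^{1/2}\braket{M_2^\ast M_2}^{1/2}$ and an isotropic one via $\abs{\braket{\vx,N_1N_2\vy}}\le\braket{\vx,N_1N_1^\ast\vx}^{1/2}\braket{\vy,N_2^\ast N_2\vy}^{1/2}$, where at the seam produced by $MM^\ast$ the Ward identity $G(z)G(z)^\ast=\Im G(z)/\eta$ collapses two resolvents into a single $\Im G$ and a factor $\eta^{-1}=N\rho^\ast/K$. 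Iterating, each chain is dominated by products of shorter self-folded pieces that either fall under the inductive hypothesis or are the atomic quantities above, with the resolvent count strictly decreasing at each fold. The essential point is to place the seam so as not to touch the designated orthogonality vertices: in the cyclic case the cut may be made at any of the $l(e)$ positions, and in the isotropic case away from the two endpoints (which, by construction of the reduced graph, are never orthogonality vertices), so that all $a(e)$ traceless matrices and all $t(e)$ transpose-switches survive and generate the claimed $\Lambda_+^{a(e)}\Pi_+^{t(e)}$. The remaining non-imaginary-part adjacencies at the seam are absorbed into the estimate $\lVert\Im G(z)\rVert\le\eta^{-1}$ without cost beyond the $N$- and $K$-powers already present.

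The main obstacle is the exact exponent bookkeeping through the recursion: one must reproduce the $N$-power $l(e)-o(e)/2-1$, the $K$-power $o(e)-l(e)+\bm1(o(e)=0)+\bm1(o(e)=1)/2$ in~\eqref{Improved 2 reduced edge est} (respectively $o(e)-l(e)+1$ in~\eqref{Improved 2 reduced edge est iso}), and above all the maximum $i(e)\vee(l(e)-o(e)+\bm1(0<o(e)<l(e)))$ in the $\rho$-power. This forces a split into the ``imaginary-part-rich'' regime, where every $\Im G_k$ yields a gain $\rho_k$ through $\abs{\braket{\Im G_k\,(\cdot)}}\lesssim\rho_k\lVert(\cdot)\rVert$ and $\braket{\Im G(z)}\prec\rho$, and the ``orthogonality-rich'' regime, where the gains come from the atomic $\Lambda_+,\Pi_+$ factors; each must be checked to close separately, and one then takes the worse of the two, which is exactly the maximum in the $\rho$-exponent. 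The indicator corrections $\bm1(o(e)=0)$, $\bm1(o(e)=1)$ and $\bm1(o(e)=l(e)-1)$ record the boundary phenomenon that a fully orthogonal cycle ($o(e)=l(e)$), or an isotropic chain with $o(e)=l(e)-1$, leaves no ``free'' resolvent on which to spend a Ward factor $\eta^{-1}$, whereas for $o(e)\le1$ a residual Ward factor survives, producing the extra $K^{\bm1(o(e)=0)+\bm1(o(e)=1)/2}$ improvement. Verifying that every folding choice respects these constraints — i.e.\ that one always cuts where the Ward gain is available and estimates entrywise only where it is not — is the delicate combinatorial part of the argument.
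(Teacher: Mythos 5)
There is a genuine gap, and it sits at the heart of the lemma. The crucial case is \(o(e)=l(e)\) for a cycle (every internal vertex an orthogonality vertex), e.g.\ \(\braket{G_1A_1G_2A_2\cdots G_lA_l}\) with all \(\braket{A_k}=0\) and \(l\ge3\). Your induction proposes to fold such an object down to \(l=2\) atoms by matrix-level Cauchy--Schwarz, \(\abs{\braket{M_1M_2}}\le\braket{M_1M_1^\ast}^{1/2}\braket{M_2^\ast M_2}^{1/2}\), placing the seam "so as not to touch the designated orthogonality vertices." But in a fully alternating cycle there is no such seam: every cut vertex either turns a traceless \(A_k\) into \(A_kA_k^\ast\) (which has nonzero trace) or pairs \(G\) with \(G^\ast\) instead of \(G^t\). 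This is exactly the loss described around~\eqref{Schw} that the lemma is designed to avoid, and once an orthogonality vertex is destroyed the corresponding factor of \(\Lambda_+\) (equivalently the factor \(K/\sqrt N\)) cannot be recovered, so the claimed exponent \(\Lambda_+^{a(e)}\Pi_+^{t(e)}N^{-o(e)/2}K^{o(e)}\) is out of reach. (Relatedly, the assertion that "the resolvent count strictly decreases at each fold" is false: folding \(M_1\mapsto M_1M_1^\ast\) roughly doubles the number of resolvents in each factor.)

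The paper's proof does not reduce to \(l=2\). It treats the case \(o(e)=l(e)\) of \emph{arbitrary length} directly by spectral decomposition: each eigenvalue index is locally averaged over a window of size \(L\) using rigidity~\eqref{eq:rig} (the analogue of~\eqref{tricka}), and then a \emph{discrete} Cauchy--Schwarz is applied to the product of eigenvector overlaps, so that each block \(L^{-2}\sum\sum\abs{\braket{\bm u_{b_k},B_k\bm u_{b_{k+1}}}}^2\) is bounded by \((\Lambda_+^{B_k})^2/N\) directly from the definition of \(\Xi_L\), \(\bar\Xi_L\); this discrete Cauchy--Schwarz squares each overlap in place and loses no orthogonality. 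The cases \(0<o(e)<l(e)\) are then reduced to this one (not to \(l=2\)) via the generalized inequality \(\abs{\braket{XYZ}}\le[\braket{X^\ast X(YY^\ast)^{1/2}}\braket{ZZ^\ast(Y^\ast Y)^{1/2}}]^{1/2}\), which cuts \emph{through} a resolvent \(Y\) rather than at a vertex and therefore preserves all orthogonality vertices on both sides. Your base-case discussion also understates a point the paper handles by redoing the spectral argument for general \(z\) with \(N\eta\rho\ge L\) rather than invoking Lemma~\ref{lem:compa} at the canonical parameters; the Poisson-semigroup transfer you sketch works for \(\Im G\) but not readily for \(G\) or \(\abs{G}\) factors. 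Without the spectral-decomposition argument for the fully alternating case, the induction cannot close.
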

\begin{remark}\label{remark loss}
  The estimates~\eqref{Improved 2 reduced edge est}--\eqref{Improved 2 reduced edge est iso} are designed to take advantage of the asymptotic orthogonality vertices. Indeed, using that a posteriori we will show that \(\Lambda_++\Pi_+\prec 1\) in the bulk, \(\rho\sim 1\), both inequalities essentially depend on the number of orthogonality-vertices as \((K/\sqrt{N})^o \sim (\sqrt{N}\eta)^o\) (ignoring some \(K\) factors in~\eqref{Improved 2 reduced edge est} for \(o=0, 1\)). Therefore as long as \(\eta\ll N^{-1/2}\) the orthogonality helps and our bounds do exploit this effect. However, for \(\eta\gg N^{-1/2}\) it is better to use~\eqref{Improved 2 reduced edge est}--\eqref{Improved 2 reduced edge est iso} by simply ignoring the asymptotic orthogonality, i.e.\ choosing \(\mathfrak{a}=\mathfrak t=\emptyset\). We will not need to use this improvement in the main body of the proof of Theorem~\ref{chain G underline theorem}, but it will be used when we remove simplification (iii) in Section~\ref{sec large eta}.
\end{remark}
Using Lemma~\ref{degree two lemma}, the proof of which we defer to the the end of the subsection, we now conclude the proof of Lemma~\ref{lemma Itwo}. From Lemma~\ref{lemma reduction} we obtain \(\Val(\Gamma)=\Val(\Gamma_\mathrm{red})\) with \(\Val(\Gamma_\mathrm{red})\) as in~\eqref{Val reduced def}. By estimating \(\abs{\kappa^{(uv)}_{ab}}\lesssim 1\) and \(\cG\) via Lemma~\ref{degree two lemma} we obtain from~\eqref{red e a i rel}, 
\[ \prod_{(uv)\in E_\kappa^{\ge 3}}\biggl( \sum_{a_u,a_v} N^{-d_g((uv))/2} \biggr)=\prod_{k\ge 3}\Bigl(N^{2-k/2}\Bigr)^{\abs{E_\kappa^k}}= N^{\abs{E_\kappa^3}/2-\delta^{\ge 4}},\]
and \(\abs{V_\mathrm{cyc}}=\abs{E_g^{\mathrm{red},\mathrm{cyc}}}\) that 
\begin{equation}\label{eq val entrywise bd}
  \begin{split}
    \abs{\Val(\Gamma)} &\prec \Lambda_+^{\abs{V_\mathrm{o}^{0\mathrm{tr}}}}\Pi_+^{\abs{V_\mathrm{o}^t}} \rho^{ i'' } N^{-\abs{E_\kappa^2}+\abs{V_\mathrm{cyc}}+\abs{E_\kappa^3}/2+\abs{E_g}-\abs{V_\mathrm{o}}/{2}-\abs{E_g^\mathrm{red}}-\delta^{\ge 4}} \\ 
    &\qquad \times K^{\abs{V_\mathrm{o}}-\abs{E_g} + \abs{E_g^\mathrm{red}}-\abs{V_\mathrm{cyc}}+\abs{V_\mathrm{cyc}^{o=0}}+\abs{V_\mathrm{cyc}^{o=1}}/2} \\
    & \lesssim \Lambda_+^{\abs{V_\mathrm{o}^{0\mathrm{tr}}}}\Pi_+^{\abs{V_\mathrm{o}^t}} \rho^{ 2ip\vee(\abs{V_\mathrm{i}}+2\abs{E_\kappa^2}-\abs{V_\mathrm{o}})} N^{\abs{E_\kappa^2}+\abs{E_\kappa^3}/{2}+\abs{V_i}-\abs{V_\mathrm{o}}/{2}-\delta^{\ge 4}} \\
    &\qquad \times K^{\abs{V_\mathrm{o}}-\abs{V_i}-2\abs{E_\kappa^2}+\abs{V_\mathrm{cyc}^{o=0}}+\abs{V_\mathrm{cyc}^{o=1}}/{2}},
  \end{split} 
\end{equation}
where we used~\eqref{number edges Egred} in the second step. Here we counted the factors of \(\rho\) as 
\begin{equation}\label{counting of rhos I two}
  \begin{split}
    i'' :={}& \sum_{e\in E_g^\red} i(e) \vee \begin{cases}
      l(e)-o(e)+\bm1(0<o(e)<l(e)),& e\in E_g^{\red,\cyc},\\ l(e)-o(e)-\bm1(o(e)=l(e)-1), & e\in E_g^{\red}\setminus E_g^{\red,\cyc},
    \end{cases}\\
    \ge{}& \sum_{e\in E_g^\red} i(e) \vee [l(e)-o(e)-\bm1(e\not\in E_g^{\red,\cyc})] \\
    \ge{}& 2ip \vee\Bigl( \abs{E_g}-\abs{V_\mathrm{o}} - \abs{E_g^\red} + \abs{V_\cyc} \Bigr) = 2ip \vee \Bigl(\abs{V_\mathrm{i}}+2\abs{E_\kappa^2}-\abs{V_\mathrm{o}}\Bigr)
  \end{split}
\end{equation}
due to~\eqref{number edges Egred}, completing the proof of Lemma~\ref{lemma Itwo}.

\begin{proof}[Proof of Lemma~\ref{degree two lemma}]
  We actually prove a slightly more general bound which allows for chains \(\cG^{e}=G_1B_1\cdots G_l B_l\) with 
  \[G_k\in\set{G(z_k),G(z_k)^\ast,\Im G(z_k),\abs{G(z_k)},(G(z_k))^t,(G(z_k)^\ast)^t,(\Im G(z_k))^t,\abs{G(z_k)}^t},\] 
  i.e.\ including factors of the form \(\abs{G}=\sqrt{G^\ast G}=\sqrt{GG^\ast}\). Within the proof we will repeatedly use~\eqref{ali relations} which implies \(o(e)\le l(e)\) for \(e\in E_g^{\mathrm{red},\mathrm{cyc}}\) and \(o(e)\le l(e)-1\) for \(e\in E_g^{\red}\setminus E_g^{\red,\cyc}\). We prove~\eqref{Improved 2 reduced edge est} by distinguishing several cases depending on the parameters \(o(e)\) and \(l(e)\) and a new parameter \(c(e)\) counting the number of \emph{alternating chains} associated with \(e\) defined as follows. For any \(e\in E_g^\red\) we consider the original chain or cycle in \(\Gamma\) that was reduced to \(e\). The alternating chains associated with \(e\) are the maximal subchains of these original chain/cycle  with  internal vertices from \(V_\mathrm{o}\) and at least one \(V_\mathrm{o}\)-vertex.
  For example, if \(e\in E_g^\red\) was the reduction of the cycle \(\braket{(GA)(\Im G A)(B G^\ast)(G)(AG^\ast)(\Im G)^t}\) then the alternating chains associated with \(e\) are \((GA)(\Im G A)\) and \((G)(AG^\ast)(\Im G)^t\). By maximality, \(o(e)\), the number of \(V_\mathrm{o}\)-vertices in the original chain/cycle that has been reduced to \(e\) is equal to the total number of \(V_\mathrm{o}\) vertices in the alternating chains associated with \(e\). In particular, \(c(e) \le o(e)\).
  
  \subsubsection*{Averaged bound for \(o(e)=0\)} 
  In the case without alternating chains, i.e.\ for \(o(e)=0\) we simply split off any \(G\)-factor by Cauchy-Schwarz and obtain 
  \[ 
  \begin{split}
    \abs{\braket{G_1 B_1 G_2 B \cdots G_l B_l}}&\le \sqrt{\braket{G_1 \abs{B_1}^2 G_1^\ast}\braket{G_2B_2 \cdots G_l \abs{B_l}^2 G_l^\ast \cdots B_2^\ast G_2^\ast}} \\
    &\prec \frac{\rho}{\eta^{l-1}} \le \rho^l N^{l-1} K^{1-l}.
  \end{split} \]
  Here, and frequently in the remaining proof we used the \emph{Ward identity} \(G(z)G(z)^\ast=\Im G(z)/\Im z\) and the norm bounds \(\norm{G}\lesssim 1/\eta\), \(\norm{B_k}\lesssim1\).  
  
  \subsubsection*{Averaged bound for \(a(e)=l(e)\)} 
  For \(\cG^e = G_1 B_1 G_2 B_2 \cdots G_l B_l \) we use spectral decomposition to write 
  \[
  \begin{split}
    \braket{\cG^e} = N^{-1} \sum_{a_1\ldots a_l} \braket{\bm u^{(1)}_{a_1}, B_1 \bm u^{(2)}_{a_2}}\cdots \braket{\bm u^{(l)}_{a_l}, B_l \bm u_{a_1}^{(1)}} p_{a_1}^{(1)}\cdots p_{a_l}^{(l)},
  \end{split}
  \]
  where \(p^{(k)}_a=(\lambda_a-z_k)^{-1},(\lambda_a-\ov{z_k})^{-1}, \Im (\lambda_a-z_k)^{-1},\abs{\lambda_a-z_k}^{-1}\) depending on whether \(G_k=G,G^\ast,\Im G,\abs{G}\), and \(\bm u^{(k)}_a\in\set{\bm u_a,\ov{\bm u_a}}\), depending on whether \(G_k\) is transposed or not. By additional averaging using the analogue of~\eqref{tricka}, Cauchy-Schwarz and the high-probability bounds
  \begin{equation}\label{eig sum bounds}
    \sum_{a} \frac{1}{\abs{\lambda_a-z}} \lesssim N \log N,\qquad \sum_{a} \abs*{\Im\frac{1}{\lambda_a-z} }\lesssim \rho(z) N ,
  \end{equation}
  from rigidity~\eqref{eq:rig} it follows that 
  \begin{equation}\label{cG l=a>=2} 
    \begin{split}
      \abs{\braket{\cG^e}} &\lesssim \frac{1}{N} \sum_{\substack{a_k\in[N]\\ k\in[l]}} \abs[\big]{p_{a_1}^{(1)}}\cdots \abs[\big]{p_{a_l}^{(l)}} \frac{1}{L^l}\sum_{\substack{\abs{b_k-a_k}\le L\\ k\in[l]}} \abs{\braket{\bm u_{b_1}^{(1)}, B_1 \bm u_{b_2}^{(2)}}}\cdots \abs{\braket{\bm u_{b_l}^{(l)}, B_l \bm u_{b_1}^{(1)}}} \\
      & \lesssim  \frac{1}{N} \sum_{\substack{a_k\in[N]\\ k\in[l]}} \abs[\big]{p_{a_1}^{(1)}}\cdots \abs[\big]{p_{a_l}^{(l)}} \sqrt{\frac{1}{L^2}\sum_{\abs{a_1-b_1}\le L}\sum_{\abs{a_2-b_2}<L} \abs{\braket{\bm u_{b_1}^{(1)},B_1\bm u_{b_2}^{(2)}}}^2 }\cdots\\
      &\qquad\qquad\qquad\qquad\qquad\qquad\times \sqrt{\frac{1}{L^2}\sum_{\abs{a_l-b_l}\le L}\sum_{\abs{a_1-b_1}<L} \abs{\braket{\bm u_{b_l}^{(l)},B_l\bm u_{b_1}^{(1)}}}^2 }\\
      &\prec \Lambda_+^{a}\Pi_+^{t} \rho^{i} N^{l/2-1},
    \end{split}
  \end{equation}
  where \(\log N\) factors have been incorporated into the \(\prec\) notation in the ultimate inequality. 
  
  \subsubsection*{Averaged bound for \(o(e)=1\)} 
  By cyclicity we may assume \(\cG^e=G_1 B_1 G_2 B_l\cdots G_l B_l\) is such that the index between \(G_1B_1\) and \(G_2B_2\) is the asymptotic orthogonality index and estimate
  \[ 
  \begin{split}
    \abs{\braket{G_1 B_1 G_2 B_2 \cdots G_l B_l}}&\le \sqrt{\braket{G_1 B_1 G_2 G_2^\ast B_1^\ast G_1^\ast} \braket{B_2 G_3 \cdots G_l \abs{B_l}^2 G_l^\ast \cdots G_3^\ast B_2^\ast}} \\
    &\prec \eta^{-1} \Lambda_+ \rho N^{l-5/2}\rho^{l-2} K^{5/2-l}\le \Lambda_+ \rho^l N^{l-3/2} K^{3/2-l},
  \end{split}  \]
  from the \(o(e)=0\) and \(o(e)=l(e)\) cases, and using the Ward identity.
  
  \subsubsection*{Averaged bound for \(2\le o(e)<l(e)\) and \(c(e)=1\)} 
  For this case we may assume by cyclicity that 
  \[\cG^e=G_1 B_1 \cdots G_o B_o G_{o+1}B_{o+1}\cdots G_l B_l\] 
  such that the summations between \(G_1\) and \(G_o\) correspond to orthogonality indices. Here we make use of the inequality 
  \begin{equation}\label{XYZ claim}
    \abs{\braket{XYZ}} \le \Bigl[\braket{X^\ast X (YY^\ast)^{1/2}}\braket{ZZ^\ast (Y^\ast Y)^{1/2}}\Bigr]^{1/2}
  \end{equation}
  for arbitrary matrices \(X,Y,Z\) which follows from singular value decomposition of \(Y=USV^\ast\) and Cauchy-Schwarz in the form 
  \[ \begin{split}
    \abs{\braket{XYZ}}^2 &= \abs{\braket{X U\sqrt{S}\sqrt{S}V^\ast Z}}^2 \\
    &\le\braket{XUSU^\ast X^\ast}\braket{Z^\ast V S V^\ast Z } =\braket{X^\ast X (YY^\ast)^{1/2} }\braket{Z Z^\ast  (Y^\ast Y)^{1/2}  }.
  \end{split} \]
  By applying~\eqref{XYZ claim} with \(X=G_1 B_1,Y=G_{2},Z=B_2 G_3\cdots B_o G_{o+1} B_{o+1}\cdots G_l B_l\) we obtain 
  \[ 
  \begin{split}
    &\abs{\braket{G_1 B_1 \cdots G_o B_o G_{o+1} \cdots G_l B_l}}
    \\& \le \sqrt{\braket{B_1^\ast G_1^\ast G_1 B_1 \abs{G_2}} \braket{\abs{G_2}^{1/2} B_2 G_3 \cdots B_o G_{o+1}\cdots G_l \abs{B_l}^2 G_l^\ast \cdots  G_{o+1 }^\ast B_o^\ast \cdots G_3^\ast B_2^\ast \abs{G_2}^{1/2} }}\\
    & \lesssim \frac{1}{\eta^{l-o}} \Bigl[\braket{B_1^\ast \Im G_1 B_1 \abs{G_2}} \braket{\abs{G_2} B_2 G_3 \cdots B_o \Im G_{o+1} B_o^\ast \cdots G_3^\ast B_2^\ast }\Bigr]^{1/2} \\ 
    & \prec \Lambda_+^a\Pi_+^t \rho^{l-o+1+i_{2\ldots o}} N^{l-o/2-1} K^{o-l} ,
  \end{split} 
  \]
  where \(i_{2\cdots o}\) is the number of \(\Im G\)'s among \(G_2,\ldots G_o\), and we used the previously considered \(o(e)=l(e)\) case in the last step.  
  
  \subsubsection*{Averaged bound for \(2\le o(e)<l(e)\) and \(c(e)\ge 2\)}
  For at least two alternating chains, \(c(e)\ge 2\), we may write by cyclicity \(\braket{\cG^e}=\braket{\cG^{e_1}\cdots \cG^{e_{c(e)}}}\) for 
  \[\cG^{e_j}=G_{j,1}B_{j,1}G_{j,2}\cdots B_{j,o_j} G_{j,o_j+1}B_{j,o_j+1} \cdots G_{j,l_j} B_{j,l_j},\]
  for some \(1\le o_j\le l_j -1\) such for each \(\cG^{e_j}\) the first \(o_j\) internal summation indices are orthogonality indices. By Cauchy-Schwarz it follows that 
  \begin{equation}\label{general av case}
    \begin{split}
      \abs{\braket{\cG^e}} &\le \frac{1}{N}\sqrt{\prod_{j\in[c(e)]} \Tr \cG^{e_j}(\cG^{e_j})^\ast }\\
      &\lesssim \frac{1}{N}\prod_{j\in[c(e)]} \frac{1}{\eta^{l_j-o_j}} 
      \sqrt{\Tr \Im G_{j,1} B_{j,1} G_{j,2} \cdots G_{j,o_j} B_{j,o_j} 
      \Im G_{j,o_j+1} B_{j,o_j}^\ast G_{j,o_j}^\ast \cdots G_{j,2}^\ast B_{j,1}^\ast } \\
      &\prec \frac{1}{N}\prod_{j\in[c(e)]} \frac{N^{o_j/2}\rho^{i_j+1}}{\eta^{l_j-o_j}} \Lambda_+^{a_j}\Pi_+^{t_j} \\
      &\le \Lambda_+^{\sum_j a_j}\Pi_+^{\sum_j t_j} \rho^{\sum_j(l_j-o_j+i_j+1)} N^{\sum_j (l_j - o_j/2) -1 } K^{\sum_j (o_j-l_j)} ,
    \end{split} 
  \end{equation}
  where \(i_j\) denotes the number of \(\Im G\)'s among \(G_{j,2},\ldots, G_{j,o_j}\), and we used the previously discussed \(o(e)=l(e)\) case in the third inequality. This concludes the proof of~\eqref{Improved 2 reduced edge est}.
  
  \subsubsection*{Isotropic bound for \(o(e)=l(e)-1\)}
  The claimed bound is trivial if \(l(e)=1\) (and hence \(o(e)=0\)). Otherwise for \(l(e)\ge 2\) we estimate 
  \begin{equation}
    \begin{split}
      \abs{\braket{\bm v,\cG^e \bm w}} &= \abs{\braket{\bm v,G_1 B_1 G_2 B_2 \cdots B_{l-1} G_l,\bm w}} \\
      &\lesssim \sum_{\substack{a_k\in[N]\\k\in[l]}} \abs{p_{a_1}^{(1)}}\cdots \abs{p_{a_l}^{(l)}} \abs{ \braket{\bm v,\bm u_{a_1}^{(1)}} } \abs{\braket{\bm u_{a_1}^{(1)},B_1\bm u_{a_2}^{(2)}}}\cdots\abs{\braket{\bm u_{a_{l-1}}^{(l-1)},B_{l-1}\bm u_{a_l}^{(l)}}}\abs{\braket{\bm u_{a_l}^{(l)},\bm w}}\\
      &\prec \frac{1}{N} \sum_{\substack{a_k\in[N]\\ k\in[l]}} \abs[\big]{p_{a_1}^{(1)}} \cdots \abs[\big]{p_{a_l}^{(l)}} \frac{1}{L^{l}}\sum_{\substack{\abs{b_k-a_k}\le L\\ k\in[l]}} \abs{\braket{\bm u_{b_1}^{(1)},B_1\bm u_{b_2}^{(2)}}}\cdots\abs{\braket{\bm u_{b_{l-1}}^{(l-1)},B_{l-1}\bm u_{b_l}^{(l)}}}\\
      &\prec \Lambda_+^{a}\Pi_+^{t} \rho^{i}   N^{l/2-1/2} ,
    \end{split}
  \end{equation}
  using delocalisation \(\abs{\braket{\bm u_a,\bm v}}+\abs{\braket{\ov{\bm u_a},\bm v}}\prec N^{-1/2}\) for any deterministic \(\bm v\) with \(\norm{\bm v}\lesssim 1\), by the isotropic law in~\eqref{eq:oldlocal}, in the second inequality. 
  
  \subsubsection*{Isotropic bound for \(o(e)\le l(e)-2\)}
  We decompose \(\cG^e=\cG^{e_1}\cdots \cG^{e_{k}}\) such that each of \(\cG^{e_2},\ldots,\cG^{e_{k-1}}\) begins with a new alternating chain followed (potentially) by further \(G\)'s, \(\cG^{e_1}\) either begins with an alternating chain, or is a chain without orthogonality indices, and \(\cG^{e_k}\) is either an alternating chain or a chain without orthogonality indices. For example, by brackets denoting the decomposition, we would separate
  \[ \braket{\bm v,(GB_1G^\ast B_2 (\Im G)^t B_3) (GB_4G^t B_5)(G^\ast B_6)\bm w}\] 
  if the indices associated with \(B_1,B_2,B_4\) are orthogonality indices, and estimate
  \[\abs{\braket{\bm v,\cG^e \bm w}}\le \Bigl[\braket{\bm v,\cG^{e_1}(\cG^{e_1})^\ast \bm v} (\Tr \cG^{e_2} (\cG^{e_2})^\ast) \cdots ( \Tr \cG^{e_{k-1}} (\cG^{e_{k-1}})^\ast) \braket{\bm w,(\cG^{e_{k}})^\ast \cG^{e_{k}}\bm w} \Bigr]^{1/2}. \]
  For the two isotropic factors of length \(l_j\) with \(o_j\) orthogonality indices and \(i_j\) many \(\Im G\)'s we claim that 
  \begin{equation}\label{iso claim ei}
    \abs{\braket{\bm v,\cG^{e_j}(\cG^{e_j})^\ast\bm v}}  \prec \frac{N^{2l_j-o_j-1}\rho^{2i_j\vee 2(l_j-o_j)}\Lambda_+^{2a_j}\Pi_+^{2t_j}}{K^{2(l_j-o_j)-1}}
  \end{equation}
  which follows from 
  \[
  \begin{split}
    &\abs{\braket{\bm v,G_1 B_1 \cdots G_o B_o G_{o+1}B_{o+1} \cdots G_l G_l^\ast \cdots B_{o+1}^\ast G_{o+1}^\ast B_o^\ast G_o \cdots B_1^\ast G_1^\ast \bm v}} \\
    &\quad\lesssim \frac{\abs{\braket{\bm v,G_1 B_1 \cdots G_o B_o \Im G_{o+1} B_o^\ast G_o \cdots B_1^\ast G_1^\ast \bm v}}}{\eta^{2(l-o)-1}}\\
    &\quad \prec \frac{N^{2l-o-1}\rho^{2i_{1\cdots o}+2(l-o)}\Lambda_+^{2a}\Pi_+^{2t}}{K^{2(l-o)-1}},
  \end{split}
  \]
  where \(i_{1\cdots o}\) is the number of \(\Im G\)'s among \(G_1,\ldots,G_o\). For the tracial factors we have, as in~\eqref{general av case}, that 
  \begin{equation}\label{av claim ei} \Tr \cG^{e_j}(\cG^{e_j})^\ast \prec \frac{N^{2l_j-o_j}\rho^{2i_j\vee 2(l_j-o_j)}\Lambda_+^{2a_j}\Pi_+^{2t_j}}{K^{2(l_j-o_j)}}.\end{equation} 
  By combining~\eqref{iso claim ei}--\eqref{av claim ei} we obtain 
  \[\begin{split}
    \abs{\braket{\bm v,\cG^e\bm w}}&\prec \frac{K}{N} \prod_{j\in[k]} \frac{N^{l_j-o_j/2}\rho^{i_j\vee (l_j-o_j)}\Lambda_+^{a_j}\Pi_+^{t_j}}{K^{l_j-o_j}} \\
    &= \Lambda_+^{a}\Pi_+^{t} \rho^{i\vee(l-o)} N^{l-o/2-1} K^{o-l+1},
  \end{split} \]
  completing the proof of~\eqref{Improved 2 reduced edge est iso} also in this case.
\end{proof}

\subsubsection{Improved degree three estimate: Proof of Lemma~\ref{lemma Ithree}}
The proof of Lemma~\ref{lemma Ithree} consists of identifying improvements over the estimate given in Lemma~\ref{lemma Itwo} that relied solely on entrywise bounds for each individual \(\cG\)-factor. In order to quantify the improvement we distinguish the two different entrywise bounds in Lemma~\ref{lemma Itwo} as 
\begin{equation}\label{eq I2est min} \abs{\Val(\Gamma_\mathrm{red})} \prec \ItwoiEst(\Gamma)\wedge \ItwozEst(\Gamma),\end{equation}
where \(\ItwoiEst,\ItwozEst\) are defined as in~\eqref{Itwo eq def} but with \(\ItwoiEst\) having \(\rho\)-exponent \(2ip\), and \(\ItwozEst\) having \(\rho\)-exponent \(\abs{V_\mathrm{i}}+2\abs{E_\kappa^2}-\abs{V_\mathrm{o}}\). Note that \(\rho\lesssim 1\) and therefore the maximum in the exponent of \(\rho\) in~\eqref{Itwo eq def} corresponds to the minimum of \(\ItwoiEst,\ItwozEst\). 

Within the reduced graphs we call a subset \(E_\mathrm{Ward}\subset E_g^{\red}\setminus(E_g^{\red,\cyc}\cup\set{(vv)\given v\in V_\kappa^{\ge 3}})\) \emph{Wardable} if each subgraph \(\Gamma'\subset (V_\mathrm{e}\cup V_\kappa^{\ge 3},E_\mathrm{Ward})\) satisfies \(\min\set{ d_g^{\Gamma'}(v)\given v\in V_\kappa^{\ge 3}}\le 2\). The contribution of these 
Wardable edges will be estimated better than their trivial entrywise bound to obtain \(\IthreeEst\). We start with a simple alternative characterization of Wardable subsets (see~\cite[Lemma 4.5]{MR4134946} and~\cite{MR193025,MR266812}).
\begin{lemma}\label{lemma degen}
  A subset \(E_\mathrm{Ward}\) is Wardable if and only if there exists an ordering \(V_\kappa^{\ge 3}=\set{v_1,v_2,\ldots}\) such that the sequence of graphs \(\Gamma_0:=(V_\mathrm{e}\cup V_\kappa^{\ge 3},E_\mathrm{Ward})\), \(\Gamma_{k}:=\Gamma_{k-1}\setminus\set{v_k}\) satisfies \(d_g^{\Gamma_{k-1}}(v_k)\le 2\) for each \(k\ge 1\), where it is understood that \(\Gamma_k\) is obtained from \(\Gamma_{k-1}\) by removing \(v_k\) and all adjacent edges. 
\end{lemma}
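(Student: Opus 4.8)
The plan is to establish the equivalence by the classical greedy (degeneracy) argument, exactly as in the Szekeres--Wilf-type results cited after the statement.

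For the direction ``ordering $\Rightarrow$ Wardable'' I would argue as follows. Suppose an ordering $V_\kappa^{\ge 3}=\{v_1,v_2,\dots\}$ with $\Gamma_0:=(V_\mathrm{e}\cup V_\kappa^{\ge 3},E_\mathrm{Ward})$, $\Gamma_k:=\Gamma_{k-1}\setminus\{v_k\}$ and $d_g^{\Gamma_{k-1}}(v_k)\le 2$ is given, and let $\Gamma'$ be any subgraph of $(V_\mathrm{e}\cup V_\kappa^{\ge 3},E_\mathrm{Ward})$ that contains at least one vertex of $V_\kappa^{\ge 3}$. Let $v_k$ be the vertex of $V_\kappa^{\ge 3}\cap V(\Gamma')$ of smallest index. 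Every edge of $\Gamma'$ incident to $v_k$ joins $v_k$ either to a vertex of $V_\mathrm{e}$ or, by minimality of $k$, to some $v_j$ with $j>k$; in both cases that edge is still present in $\Gamma_{k-1}=\Gamma_0\setminus\{v_1,\dots,v_{k-1}\}$, so $d_g^{\Gamma'}(v_k)\le d_g^{\Gamma_{k-1}}(v_k)\le 2$, and hence $\min\set{d_g^{\Gamma'}(v)\given v\in V_\kappa^{\ge 3}}\le 2$. Thus $E_\mathrm{Ward}$ is Wardable.

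For the converse I would build the ordering greedily. If $V_\kappa^{\ge 3}=\emptyset$ there is nothing to do; otherwise apply the Wardable property to the full graph $\Gamma_0=(V_\mathrm{e}\cup V_\kappa^{\ge 3},E_\mathrm{Ward})$ itself to obtain $v_1\in V_\kappa^{\ge 3}$ with $d_g^{\Gamma_0}(v_1)\le 2$; then $\Gamma_1:=\Gamma_0\setminus\{v_1\}$ is again a subgraph of the required form, and if it still contains a vertex of $V_\kappa^{\ge 3}$ the Wardable property yields $v_2\in V_\kappa^{\ge 3}\setminus\{v_1\}$ with $d_g^{\Gamma_1}(v_2)\le 2$. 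Iterating, after $\abs{V_\kappa^{\ge 3}}$ steps one exhausts $V_\kappa^{\ge 3}$, and the resulting ordering satisfies $d_g^{\Gamma_{k-1}}(v_k)\le 2$ for every $k$ by construction.

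The proof is essentially bookkeeping and I do not anticipate a genuine obstacle; the only points that need a word of care are the precise meaning of ``subgraph'' in the definition of Wardable (a subset of $E_\mathrm{Ward}$ together with a subset of its incident vertices inside $V_\mathrm{e}\cup V_\kappa^{\ge 3}$) and the convention that a subgraph containing no $V_\kappa^{\ge 3}$-vertex imposes no constraint, which is exactly what keeps the greedy removal well defined until $V_\kappa^{\ge 3}$ is used up.
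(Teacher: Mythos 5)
Your proof is correct and follows essentially the same route as the paper: a greedy extraction of minimal-degree vertices for the forward direction, and for the converse, picking the smallest-index vertex of $V_\kappa^{\ge 3}$ in an arbitrary subgraph $\Gamma'$ and observing $\Gamma'\subset\Gamma_{k-1}$ so its degree is at most $2$. Your extra remarks on the empty-subgraph convention and on why edges of $\Gamma'$ at $v_k$ survive in $\Gamma_{k-1}$ only make explicit what the paper leaves implicit.
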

\begin{proof}
  Suppose that \(E_\mathrm{Ward}\) is Wardable. Then by definition there exists \(v_1\) with \(d_g^{\Gamma_0}(v_1)\le 2\) and we obtain \(\Gamma_1\) which in turn contains some vertex \(v_2\) with \(d_g^{\Gamma_1}(v_2)\le 2\). Continuing inductively yields the desired ordering. 
  
  For the reverse implication let \(v_1,v_2,\ldots\) be the given ordering and let \(\Gamma'\) be arbitrary. Set \(k_{\min}:=\min\set{k\given v_k\in \Gamma'}\) so that \(\Gamma'\subset \Gamma_{k_{\min}-1}\) and consequently \(d_g^{\Gamma'}(v_{k_{\min}})\le d_g^{\Gamma_{k_{\min}-1}}(v_{k_{\min}})\le 2\). 
\end{proof}

Lemma~\ref{lemma Ithree} follows immediately from combining the following two statements (where for the iso-graphs we simply estimate \(\rho^{\abs{E_\mathrm{Ward}}}\le \rho^{\abs{E_\kappa^3}}\) in the definition of \(\IthreeEst(\Gamma)\) below):
\begin{enumerate}[label=(S\arabic*)]
  \item\label{step number of edges} For each av-graph \(\Gamma\) the reduced graph \(\Gamma_\red\) admits a Wardable set \(E_\mathrm{Ward}\) of size 
  \begin{subequations}
    \begin{equation}\label{First Ward}
      \abs{E_\mathrm{Ward}}\ge \abs{E_\kappa^3}.  
    \end{equation}
    and for each iso-graph \(\Gamma\) satisfying the reduced graph \(\Gamma_\red\) admits a Wardable set of size 
    \begin{equation}\label{Second Ward}
      \begin{split}
        \abs{E_\mathrm{Ward}}&\ge \abs{E_\kappa^3}  + \Bigl(2p - 2(\abs{E_\kappa^2}-\abs{V_\cyc})-\sum_{e\in E_\kappa} (d_g(e)-4)_+\Bigr)_+.    
      \end{split} 
    \end{equation}
  \end{subequations}
  \item\label{step number of gains} For any av- or iso-graph \(\Gamma\in\cG\) and a given Wardable set \(E_\mathrm{Ward}\) we have the improved estimates 
  \[\abs{\Val(\Gamma_\red)} \prec \IthreeiEst(\Gamma) \wedge \IthreezEst(\Gamma)\]
  with 
  \[ \IthreeiEst(\Gamma):= K^{-\abs{E_\mathrm{Ward}}/2} \ItwoiEst(\Gamma), \quad \IthreezEst(\Gamma):= \rho^{\abs{E_\mathrm{Ward}}}K^{-\abs{E_\mathrm{Ward}}/2} \ItwozEst(\Gamma).\]
\end{enumerate}

\begin{proof}[Proof of~\ref{step number of edges}] 
  We start  with two inequalities that will be proven later. Denoting the number of \(E_g^\mathrm{red}\)-edges between two subsets of 
  vertices \(V',V''\subset V\) by \(e_g(V',V'')\),
  we claim that for av-/iso graphs \(\Gamma\) we have
  \begin{subequations}\label{number V3 Vge3 adj}
    \begin{equation}\label{number V3 adj}
      e_g(V_\kappa^3,V_\mathrm{e})+e_g(V_\kappa^3,V_\kappa^{\ge 3}) \ge 3\abs{E_\kappa^3},
    \end{equation}  
    while for iso-graphs \(\Gamma\) we also have
    \begin{equation}\label{number Vge3 adj}
      e_g(V_\kappa^{\ge 3},V_\mathrm{e})+e_g(V_\kappa^{\ge 3},V_\kappa^{\ge 3}) \ge \sum_{e\in E_\kappa^{\ge 3}}d_g(e)+2p - 2(\abs{E_\kappa^2}-\abs{V_\cyc}). 
    \end{equation}
  \end{subequations}
  Armed with these inequalities, we first construct \emph{candidate sets} of edges within \(E_g^{\red}\setminus E_g^{\red,\cyc}\) which are not necessarily Wardable, and then iteratively remove  certain 
  edges to make the sets Wardable. 
  For the proof of \(\abs{E_\mathrm{Ward}}\ge \abs{E_\kappa^3}\)
  for both av- and iso-graphs we start with
  the candidate set consisting of all \(G\)-edges adjacent to \(V_\kappa^3\)-vertices.
  The size of this set is \( e_g(V_\kappa^3,V_\mathrm{e})+e_g(V_\kappa^3,V_\kappa^{\ge 3}) \).
  We remove  at most one edge adjacent to any \(v\in V_\kappa^3\), so that the at most two remaining edges are not loops. After doing so in arbitrary order 
  for all \(V_\kappa^3\)-vertices
  we obtain an edge set which is Wardable by construction. 
  Since the total number of removed edges is at most \( \abs{ V_\kappa^3} = 2  \abs{ E_\kappa^3}\),
  we immediately obtain~\eqref{First Ward}, and~\eqref{Second Ward} in case \((\ldots)_+=0\) from~\eqref{number V3 adj}. 
  
  For the proof of~\eqref{Second Ward} in case \((\ldots)_+>0\) we consider a larger
  candidate set of size \(e_g(V_\kappa^{\ge 3},V_\mathrm{e})+e_g(V_\kappa^{\ge 3},V_\kappa^{\ge 3}) \) that consists of all edges adjacent to \(V_\kappa^{\ge 3}\)-vertices. Going through  
  all   \(V_\kappa^{\ge3}\)-vertices in arbitrary order
  we remove at most \(k-2\) edges for each vertex  \(v\in V_\kappa^{k}\),
  so that the at most two remaining edges  are not loops; 
  this yields again a Wardable set. 
  Since \(\abs{V_\kappa^{k}}= 2\abs{E_\kappa^k}\), the total number of removed edges is at most
  \[
  \sum_{k\ge 3} \sum_{e\in E_\kappa^k} 2(k-2)  = \sum_{e\in E_\kappa^{\ge 3}} 
  (2d_g(e)-4)   =  \sum_{e\in E_\kappa^{\ge 3}} d_g(e) + 
  \sum_{e\in E_\kappa} (d_g(e)-4)_+ - \abs{E_\kappa^3},
  \]
  which, together with~\eqref{number Vge3 adj} yields~\eqref{Second Ward}. This completes the proof of~\ref{step number of edges}
  modulo~\eqref{number V3 Vge3 adj} that we prove now.
\end{proof}
\begin{proof}[Proof of~\eqref{number V3 Vge3 adj}]
  The bound~\eqref{number V3 adj} follows from 
  \[
  \begin{split}
    6\abs{E_\kappa^3} &= 2\sum_{(uv)\in E_\kappa^3} d_g((uv)) = \sum_{v\in V_\kappa^3} d_g(v) \\
    &= 2 e_g(V_\kappa^3,V_\kappa^3) + e_g(V_\kappa^3,V_\kappa^{\ge 4}\cup V_\mathrm{e}) \le 2 e_g(V_\kappa^3,V_\kappa^{\ge 3}\cup V_\mathrm{e}).
  \end{split}\]
  For the bound~\eqref{number Vge3 adj} we note that the set \(E_g^{\red}\setminus E_g^{\red,\cyc}\) can be partitioned into edges within \(V_\kappa^{\ge3}\), edges within \(V_\mathrm{e}\) and edges between these two sets, and thus 
  from~\ref{number of G edges}--\ref{no internal} and~\eqref{number edges Egred} we obtain 
  \[\begin{split}
    e_g(V_\kappa^{\ge 3},V_\mathrm{e})+e_g(V_\kappa^{\ge 3},V_\kappa^{\ge 3})&=\abs{E_g^{\red}\setminus E_g^{\red,\cyc}}-e_g(V_\mathrm{e},V_\mathrm{e})=\abs{E_g^\red}-\abs{V_\mathrm{cyc}}-e_g(V_\mathrm{e},V_\mathrm{e}) \\
    & =  \sum_{e\in E_\kappa^{\ge 3}}d_g(e)+2p - e_g(V_\mathrm{e},V_\mathrm{e}).
  \end{split}\]
  Furthermore, by~\ref{number of external} each \(V_\mathrm{e}\)-\(V_\mathrm{e}\) edge corresponds to at least one \(V_\kappa^2\)-vertex, while by~\ref{no loops} each cycle \(E_g^{\red,\cyc}\) corresponds to at least two \(V_\kappa^2\)-vertices in \(\Gamma\) (which are in particular not part of any chain), whence
  \[e_g(V_\mathrm{e},V_\mathrm{e}) \le \abs{V_\kappa^2} -  2\abs{V_\cyc}=2(\abs{E_\kappa^2}-\abs{V_\cyc})\]
  and the claim follows. 
\end{proof}
\begin{proof}[Proof of~\ref{step number of gains}]
  We recall from the proof of Lemma~\ref{lemma Itwo} that~\eqref{Itwo eq def} is the minimum of two different estimates given in~\eqref{eq I2est min}. Estimating each \(\cG^e\) for \(e\in E_g^\red\) by Lemma~\ref{degree two lemma} with a \(\rho\)-exponent of \(i(e)\) in~\eqref{Val reduced def} yields the first bound \(\abs{\Val(\Gamma_\red)}\prec \ItwoiEst(\Gamma)\). Similarly, estimating each \(\cG^e\) by Lemma~\ref{degree two lemma} with a \(\rho\)-exponent of \(l(e)-o(e)-\bm1(e\in E_g^{\red}\setminus E_g^{\red,\cyc})\) yields the second bound \(\abs{\Val(\Gamma_\red)}\prec \ItwozEst(\Gamma)\), cf.\ the first inequality in~\eqref{counting of rhos I two}. In order to prove~\ref{step number of gains} for a given Wardable set \(E_\mathrm{Ward}\) we estimate \(\cG^{e}\) for \(e\in E_g^{\red,\cyc}\cup (E_g^{\red}\setminus (E_g^{\red,\cyc}\cup E_\mathrm{Ward}))\) exactly as in Lemma~\ref{lemma Itwo} and remove the corresponding edges from the graph, leaving only \(E_\mathrm{Ward}\)-edges. In order to conclude the proof it remains to establish an additional gain of \(K^{-1/2}\) (compared to the first bound) and \(\rho K^{-1/2}\) (compared to the second bound) per \(E_\mathrm{Ward}\)-edge \(e\) compared to the entrywise estimates. 
  
  Let \(v_1,v_2,\ldots\) denote the ordering of \(V_\kappa^{\ge 3}\) guaranteed to exist by Lemma~\ref{lemma degen}. By definition of \(E_\mathrm{Ward}\) at most two Wardable edges are adjacent to \(v_1\) and whence the part of the value depending on \(a_{v_1}\) can be estimated by either 
  \begin{equation}\label{ward 1 gain} 
    \begin{split}
      \sum_{a_{v_1}} \abs{\cG_{\vx_w a_{v_1}}^{(wv)}} &\le N^{1/2} \sqrt{ [\cG^{(wv_1)} (\cG^{(wv_1)})^\ast]_{\vx_w \vx_x} }
    \end{split}
  \end{equation}
  or
  \begin{equation}\label{ward 2 gain} 
    \sum_{a_{v_1}} \abs{\cG_{\vx_w a_{v_1}}^{(wv_1)}} \abs{\cG_{a_{v_1}\vx_y}^{(v_1y)}} \le \sqrt{ [\cG^{(wv_1)} (\cG^{(wv_1)})^\ast]_{\vx_w \vx_w} } \sqrt{ [ (\cG^{(v_1y)})^\ast \cG^{(yv_1)}  ]_{\vx_y \vx_y} }
  \end{equation}
  using Cauchy-Schwarz for some \(w,y\in V_\kappa^{\ge 3}\cup V_\mathrm{e}\). In case of \(\ItwoiEst\) the entrywise estimate on the lhs.\ of~\eqref{ward 1 gain}--\eqref{ward 2 gain} used in the proof of Lemma~\ref{lemma Itwo} is at least
  \[\Lambda_+^a\Pi_+^t\rho^{i}N^{l-o/2}K^{o-l+1} \quad \text{and}\quad \Lambda_+^{a+a'}\Pi_+^{t+t'}\rho^{i+i'}N^{l+l'-o/2-o'/2-1}K^{o+o'-l-l'+2}\]
  with \(i=i((wv_1))\), \(l=l((wv_1))\), \(a=a((wv_1))\), \(t=t((wv_1))\), \(o=t+a\) and \(i'=i((v_1y))\), \(l'=l((v_1y))\), \(a'=a((v_1y))\), \(t'=t((v_1y))\), \(o'=t'+a'\) while applying Lemma~\ref{degree two lemma} to the rhs.\ yields 
  \[ \Lambda_+^a\Pi_+^t \rho^{i} N^{l-o/2} K^{o-l+1/2} \quad \text{and}\quad \Lambda_+^{a+a'}\Pi_+^{t+t'}\rho^{i+i'}N^{l+l'-o/2-o'/2-1}K^{o+o'-l-l'+1},\]
  demonstrating the gains of at least \(K^{-1/2}\) and \((K^{-1/2})^2\), respectively.
  Similarly, the \(\ItwozEst\)-estimate on the lhs.\ of~\eqref{ward 1 gain}--\eqref{ward 2 gain} is at least   
  \[\Lambda_+^a\Pi_+^t\rho^{l-o-1}N^{l-o/2}K^{o-l+1} \quad \text{and}\quad \Lambda_+^{o+o'}\rho^{l+l'-o-o'-2}N^{l+l'-o/2-o'/2-1}K^{o+o'-l-l'+2}\]
  while, in comparison, when applying Lemma~\ref{degree two lemma} to the rhs.\ of~\eqref{ward 1 gain}--\eqref{ward 2 gain}, we obtain bounds of 
  \[ \Lambda_+^a\Pi_+^t \rho^{l-o} N^{l-o/2} K^{o-l+1/2} \quad \text{and}\quad \Lambda_+^{o+o'}\rho^{l+l'-o-o'}N^{l+l'-o/2-o'/2-1}K^{o+o'-l-l'+1}, \]
  demonstrating exactly the claimed gain of \(\rho K^{-1/2}\) per edge. Here, for example, we counted that \(\cG^{(wv_1)} (\cG^{(wv_1)})^\ast\) contains \(2l\) factors of \(G\) and \(2o\) orthogonality indices satisfying \(2o\le 2l-2<2l-1\). 
  
  The proof now follows by induction since by Lemma~\ref{lemma degen} after the removal of \(v_1\), the next vertex \(v_2\) has degree at most \(2\) etc.\ and~\eqref{ward 1 gain}--\eqref{ward 2 gain} can be used to establish the gain of \((\rho)K^{-1/2}\) iteratively for each \(e\in E_\mathrm{Ward}\).
\end{proof}

\appendix
\section{Removing the simplifying assumptions in the proof of Theorem~\ref{chain G underline theorem}}\label{sec gen}

\subsection{Removing the \texorpdfstring{\(w_2=1+\sigma\)}{w2=1+sigma} Assumption~\ref{assump w2}}\label{ref relaxing diag} If \(w_2\ne 1+\sigma\), then an additional diagonal \(\delta_{ab}\) term appears in~\eqref{eq cum exp2}, i.e.\ 
\begin{equation}\label{eq cum exp3}
  \begin{split}
    \E w_{ab} f(W) &=  \E\frac{\partial_{ba} f(W) + \sigma \partial_{ab} f(W)}{N} + \delta_{ab}\frac{w_2-1-\sigma}{N} \E \partial_{aa} f(W) \\
    &\qquad + \sum_{k=2}^R\sum_{p+q=k} \frac{\kappa^{p+1,q}_{ab}}{N^{(k+1)/2}} \E \partial_{ab}^p \partial_{ba}^q f(W) + \Omega_R.
  \end{split}
\end{equation}
As a consequence additional graphs appear in the estimate where degree-two \(\kappa\)-edges are collapsed due to \(\delta_{ab}\) which we will show to be lower order due to fewer summations. Indeed, let \(\Gamma\) be any av/iso-graph and for \((uv)\in E_\kappa^2\) consider the graph \(\Gamma'\) obtained from collapsing the vertices \(u,v\) into one. We claim that 
\begin{equation}\label{collapsed value}
  \IthreeEst(\Gamma')\le \IthreeEst(\Gamma),
\end{equation} 
and thus the bounds in Proposition~\ref{prop value est} remain valid for partially collapsed graphs. Repeating the estimates~\eqref{collapsed value} recursively for all collapsed \(\kappa\)-vertices we see that the proof of Theorem~\ref{chain G underline theorem} is complete also without the simplifying Assumption~\ref{assump w2}. 

It remains to prove~\eqref{collapsed value}. By Lemma~\ref{lemma reduction} both vertices \(u,v\) are necessarily internal vertices of some \(G\)-chain or \(G\)-cycle. Now there are several possible scenarios. First, one of \(u,v\) may be in the set \(V_\mathrm{o}\) of selected orthogonality vertices (but not both cf.\ the construction in~\ref{claim orth2}), and second, 
\begin{enumerate}[label=(opt\arabic*)]
  \item\label{one chain} \(u,v\) are in the same chain, 
  \item\label{one cycle} \(u,v\) are in the same cycle,
  \item\label{two chains} \(u,v\) are in two different chains,
  \item\label{two cycles} \(u,v\) are in two different cycles,
  \item\label{chain and cycle} one of \(u,v\) is in a chain, the other one in a cycle. 
\end{enumerate}

For instance suppose we are in scenario~\ref{two chains}, in which we compare \(\IthreeEst(\Gamma)\) of the two graphs 
\begin{equation}\label{eq graph collapse example}
  \sGraph{x[rectangle,label=left:\(x\)] -- u[label=below:\(u\)] -- y[rectangle,label=right:\(y\)]; x2[rectangle,label=right:\(x'\)] -- v[label=above:\(v\)] -- y2[rectangle,label=left:\(y'\)]; u --[dashed] v;}\qquad\text{and}\qquad 
  \sGraph{x[rectangle,label=left:\(x\)] -- u[label=below:\(u\)] -- y[rectangle,label=right:\(y\)]; x2[rectangle,label=right:\(x'\)] -- u -- y2[rectangle,label=left:\(y'\)]; },
\end{equation}
where the square vertices denote vertices from \(V_\mathrm{e}\cup V_\kappa^{\ge 3}\), and the \(G\)-edges may denote chains of arbitrary lengths \(l_1,\ldots,l_4\) with \(o_1,\ldots o_4\) internal \(V_\mathrm{o}\)-vertices. On the lhs.\ of~\eqref{eq graph collapse example} the product of the estimates on the chains \((xy)\) and \((x'y')\) using Lemma~\ref{degree two lemma} is at least
\begin{equation}\label{eq example lhs bound}
  N^{\sum_i (l_i-o_i/2)}K^{\sum_i(o_i-l_i)+2}\rho^{\sum_i(l_i-o_i)} \times \begin{cases}
    N^{-2} K^2 & u,v\not\in V_\mathrm{o}\\
    N^{-5/2} K^3 \rho^{-2}, & \abs{\set{u,v}\cap V_\mathrm{o}}=1
  \end{cases} 
\end{equation} 
while on the rhs.\ of~\eqref{eq graph collapse example} the product of the estimates on the four chains \((xu),(uy),(x'u),(uy')\) and the size \(N\) of the summation corresponding to \(u\) is at most \(N N^{\sum_i (l_i-o_i/2)-4}K^{\sum_i(o_i-l_i)+4}\rho^{\sum(l_i-o_i)-4}\). However, for the graph on the rhs.\ we can gain at least two additional factors of \(\rho K^{-1/2}\) since there are at least two additional edges for which the Ward gain from~\ref{step number of gains} is applicable due to the \(u\)-summation. Thus, we obtain an estimate 
\begin{equation}\label{eq example rhs bound}
  N^{\sum_i (l_i-o_i/2)}K^{\sum_i(o_i-l_i)}\rho^{\sum(l_i-o_i)} N^{-3}K^3 \rho^{-2}.
\end{equation}
Using \(K\lesssim N\rho^2\) it follows that the bound~\eqref{eq example rhs bound} is not larger than~\eqref{eq example lhs bound} in both cases, and thus the \(\IthreeEst\)-estimate on the subgraph on the lhs.\ cannot be larger than the subgraph on the rhs.\ and the claim~\eqref{collapsed value} follows. The comparison works similarly for the other scenarios~\ref{one chain},~\ref{one cycle},~\ref{two cycles} and~\ref{chain and cycle}, so we omit further details regarding the proof of~\eqref{collapsed value} in those cases. 

\subsection{Estimate for general \texorpdfstring{\(\eta>0\)}{eta>0}; removal of assumption~\ref{assump eta}}\label{sec large eta} 
The proof is considerably simpler in the regime \(\eta\not\lesssim1\), where using norm bounds \(\norm{G}\le 1/\eta\) we can replace the bounds~\eqref{Improved 2 reduced edge est}--\eqref{Improved 2 reduced edge est iso} simply by \(\eta^{-l(e)}\). Thus, from~\eqref{Val reduced def} we obtain for any av-graph \(\Gamma\),
\[ \abs{\Val(\Gamma)}\prec \ItwoEst'(\Gamma) := N^{-\abs{E_\kappa^2}+\abs{V_\cyc}+\abs{E_\kappa^3}/2} \eta^{-2lp} \]
using~\eqref{red e a i rel} and~\ref{number of G edges}. Similarly, for any iso-graph \(\Gamma\) we obtain 
\[ \abs{\Val(\Gamma)}\prec \ItwoEst'(\Gamma) := N^{-\abs{E_\kappa^2}+\abs{V_\cyc}+\abs{E_\kappa^3}/2-\sum_{e\in E_\kappa}(d_g(e)/2-2)_+} \eta^{-2lp}.\]

Moreover, exactly as in~\ref{step number of edges} and similarly to~\ref{step number of gains} we find Wardable sets of edges, for each of which we gain a factor of \(N^{-1/2}\) compared to the \(\ItwoEst'(\Gamma)\) estimates above, in order to obtain 
\[ \abs{\Val(\Gamma)} \prec N^{-\abs{E_\kappa^2}+\abs{V_\cyc}} \eta^{-2lp}\le \eta^{-2lp} \]
for any av-graph \(\Gamma\) (using~\ref{no loops}) and 
\[ \abs{\Val(\Gamma)} \prec N^{-p} \eta^{-2lp} \]
for any iso-graph \(\Gamma\), concluding the proof of~\eqref{eq large eta bounds} for the remaining large \(\eta\) regime.

\subsection{Averaged bound in case \texorpdfstring{\(l\not\in\mathfrak{a}\cup\mathfrak{t}\ne\emptyset\)}{l e a u t}; removing Assumption~\ref{assump l a t}}\label{sec a last case}
Here we consider the \(l\not\in\mathfrak{a}\cup\mathfrak{t}\) case of the averaged bound in Theorem~\ref{chain G underline theorem}. The only difference to the case \(l\in\mathfrak{a}\cup\mathfrak t\) is the selection process of \(V_\mathrm{o}\) vertices described in~\ref{claim orth1}--\ref{claim orth2}. We fix some \(j\in\mathfrak a\cup \mathfrak t\) arbitrarily and select vertices into \(V_\mathrm{o}\) as follows:
\begin{enumerate}[label=(orth'-\arabic*)]
  \item\label{claim orthp1} For each \(k\in\mathfrak{t}\setminus\set{j},\mathfrak a\setminus\set{j}\) we collect \(2p\) distinct vertices from \(V_\mathrm{i}\) into the sets \(V_\mathrm{o}^t\) and \(V_\mathrm{o}^{0\mathrm{tr}}\), respectively. 
  \item\label{claim orthp2} If \(j\in\mathfrak t\) or \(j\in\mathfrak a\), then we select one additional vertex from \(V_\mathrm{i}\) into \(V_\mathrm{o}^t\) or \(V_\mathrm{o}^{0\mathrm{tr}}\), respectively, for each \(W\) acting as a degree-\(2\) cumulant on some resolvent. 
\end{enumerate}
The fact that the selection of \(V_\mathrm{o}\)-vertices in~\ref{claim orthp1} is possible follows exactly as for~\ref{claim orth1}, i.e.\ due to the fact that internal orthogonality vertices are guaranteed to remain orthogonality vertices throughout the cumulant expansion. For~\ref{claim orthp2} we note that we could also add all \(2p\) vertices corresponding to \(j\) into the set \(V_\mathrm{o}\) due to them being internal, however more \(V_\mathrm{o}\) vertices is not necessarily beneficial, cf.\ Remark~\ref{remark loss}. It remains to establish that the set \(V_\mathrm{o}\) satisfies the same bounds as the set \(V_\mathrm{o}\) constructed in case \(l\in\mathfrak a\cup \mathfrak t\), i.e.\ 
\begin{subequations}
  \begin{equation}\label{Vaeff le}
    \abs{V_\mathrm{o}} + 2\abs{V_\cyc^{o=0}}+\abs{V_\cyc^{o=1}} \le 2(o-1)p + 2\abs{E_\kappa^2},
  \end{equation}
  and 
  \begin{equation}\label{Vaeff ge}
    \abs{V_\mathrm{o}} \ge 2\abs{E_\kappa^2} +\abs{E_\kappa^3} + 2(o-2)p,
  \end{equation}
\end{subequations}
which follow from~\ref{Va Vcyc sum claim}--\ref{a rule}, as from~\eqref{Vaeff le}--\eqref{Vaeff ge} the claimed bound follows exactly as in~\eqref{o-1 case}. 

The claim~\eqref{Vaeff ge} follows immediately from~\eqref{number of G actions}. Regarding~\eqref{Vaeff le} we monitor the change of \(\abs{E_\kappa^{2,3}}\), \(V_\mathrm{o}\), etc.\ along the iterative construction of the graphs in the proof of Proposition~\ref{cumulant expansion prop}. In addition, we count the number \(n_{\cyc,\un{W}G}\) of cycles including \(W\) and some non-underlined \(G\). In the beginning of the expansion we have \(2p\) cycles, each including one \(W\) and \(l\) underlined \(G\)'s. Now, if some \(W\) acts on some \(W\) in another cycle, then the two cycles are replaced by one cycle with \(2l\) non-underlined \(G\)'s. However, if some \(W\) acts on a \(G\) in another cycle, then the two cycles are replaced by one cycle with one \(W\) and \((2l+1)\) \(G\)'s, \(l\) of which are not underlined, e.g.
\[ 
\begin{split}
  &\E\Tr\un{WGAGB} \Tr\un{WGAGB} \\
  &\quad = N^{-1}\E\sum_{ab} \Tr \Delta^{ab}GAGB \Bigl(\Tr \Delta^{ba}GAGB - \Tr\un{WG\Delta^{ba}GAGB} \Bigr) +\cdots\\
  & \quad = N^{-1} \E \Bigl( \Tr GAGBGAGB - \Tr GAGB\un{GAGBWG}\Bigr) + \cdots,
\end{split} 
\]
demonstrating the two possible actions. The newly created partially underlined cycle is of importance since it, contrary to the original fully underlined cycles, allows for \(W\) to act on some \(G\)'s (the non-underlined ones) in its own cycle, e.g.
\[ 
\begin{split}
  \E\Tr GAGB\un{GAGBWG} &= -N^{-1}\E\sum_{ab} \Tr G\Delta^{ba}G AGBGAGB\Delta^{ab} G +\cdots\\
  &=-N^{-1}\Tr G^2  \Tr GAGBGAGB + \cdots.
\end{split}\]
This mechanism was also present in the main body of the proof of Theorem~\ref{chain G underline theorem}, see e.g.~\eqref{WGAImGA 2nd exp}, but there we did not need to monitor the number of partially underlined cycles along the cumulant expansion and they disappeared in the end. In the current proof \(n_{\cyc,\un WG}\) is an auxiliary quantity to prove~\eqref{Vaeff le}. We claim that in all steps along the expansion the inequality 
\begin{equation}\label{Vaeff le gen}
  \abs{V_\mathrm{o}} + 2\abs{V_\cyc^{o=0}}+\abs{V_\cyc^{o=1}} + n_{\cyc,\un{W}G}\le 2(o-1)p + 2\abs{E_\kappa^2}
\end{equation}
is valid, which is obvious initially since there we have \(E_\kappa=\emptyset\), \(\abs{V_\mathrm{o}}=2(o-1)p\) and \(n_{\cyc,\un W G}=0\). Whenever some \(W\) acts as a degree-\(2\) cumulant on another \(W\), then in our algorithm no vertex is added to \(V_\mathrm{o}\), while \(\abs{E_\kappa^2}\) is increased by \(1\), and one pure-\(G\) cycle is created, so~\eqref{Vaeff le gen} continues to remain valid. Otherwise, if some \(W\) acts on some \(G\) in its own cycle (which is only possible if the corresponding \(G\) is not underlined), then \(n_{\cyc,\un{W}G}\) is decreased by \(1\), while \(\abs{V_\mathrm{o}},\abs{E_\kappa^2}\) are increased by \(1\), and either \(\abs{V_\cyc^{o=0}}\) is increased by at most \(1\), or \(\abs{V_\cyc^{o=1}}\) is increased by at most \(2\), confirming~\eqref{Vaeff le gen}. Next, if some \(W\) acts on \(G\) in another cycle, then \(\abs{V_\cyc^{o=0,1}}\) cannot increase, while \(n_{\cyc,\un{W}G}\) may increase by \(1\), and both \(\abs{V_\mathrm{o}},\abs{E_\kappa^2}\) do increase by \(1\), respecting~\eqref{Vaeff le gen}. Finally, if \(W\) acts on either \(W\) or \(G\) in some non-cycle, then the number of cycles cannot be increased, making the validity of~\eqref{Vaeff le gen} trivial. Any higher-degree cumulant expansions cannot increase the lhs.\ of~\eqref{Vaeff le gen} while leaving the rhs.\ invariant. This proves~\eqref{Vaeff le gen} inductively along the expansion. Hence, after all cumulant expansions are performed (so that, in particular, \(n_{\cyc,\un{W}G}=0\))~\eqref{Vaeff le gen} implies~\eqref{Vaeff le}. 

\subsection{Isotropic bound with \texorpdfstring{\(j=0\)}{j=0}; relaxing assumption~\ref{assump j iso}}
We now consider the isotropic bound~\eqref{eq:underIso} for \(j=0\), i.e.\ 
\[\braket{\vx,\un{WG_1B_1\cdots B_{l-1}G_l}\vy}.\] 
In order to describe the structure of graphs encoding the polynomial from the cumulant expansion of 
\begin{equation}\label{iso j0 exp}
  \E\abs{\braket{\vx,\un{WG_1B_1\cdots B_{l-1}G_l}\vy}}^{2p}
\end{equation}
similarly to the graphs from Definition~\ref{def graphs}, it is convenient to add a new type of edge \(E_=\) for edges encoding the identity matrix \(I\) which we use to ``connect'' \(\vx\) and \(W\), i.e.\ in the resulting graph, \((V_\kappa\dot\cup V_\mathrm{e},E_=)\) is bipartite. Since the \(E_=\)-edges represent the identity matrix which is symmetric, their orientation is irrelevant contrary to the \(E_g\) and \(E_\kappa\)-edges. The set of graphs we obtain satisfies~\ref{perfect matching}--\ref{no loops} (after redefining the \(d_g\)-degree to also count \(E_=\)-edges), 
\begin{equation}\label{Vi edges}
  \abs{V_\mathrm{i}}=2(l-1)p=2(o+b)p, \quad \abs{V_\mathrm{o}}=\abs{V_\mathrm{i}\cap V_\mathrm{o}}=2op, \quad V_\mathrm{o}\cap V_\kappa^2 =\emptyset,
\end{equation}
Eq.~\ref{number of external} and 
\begin{equation}\label{eq Eeq edges}
  \abs{E_=}=2p.
\end{equation}
Moreover we claim that each graph satisfies the inequality 
\begin{equation}\label{eq Vsc}
  \abs*{\set{v\in V_\kappa\given \abs{v\cap E_=}\ge 2}} \le (2p-\abs{E_\kappa})\wedge p,
\end{equation}
where \(v\cap E_=\) is understood as the set of edges from \(E_=\) adjacent to \(v\). Indeed, in the expansion of~\eqref{iso j0 exp} two \(E_=\) edges can only meet in some \(v\in V_\kappa\) if one of the adjacent \(W\)'s acted as a derivative on the other one. Thus~\eqref{eq Vsc} follows from the fact that in total \(2p-\abs{E_\kappa}\) derivatives have acted on some \(W\), cf.\ the proof of~\ref{a rule}, noting that the upper bound of \(p\) is trivial by~\eqref{eq Eeq edges}. 

Example graphs occurring along the expansion encode the polynomials (where the second and third term are non-zero only in the real case)
\begin{equation}\label{iso 0 expansion example}
  \begin{split}
    &\E \abs{\braket{\vx,\un{W G}\vy}}^2  = \E \braket{\vx,\un{W G}\vy} \braket{\vy,\un{G^\ast W}\vx}\\ 
    &= \E\sum_{ab} \kappa(ab,ba)I_{\vx a}G_{b\vy} G^\ast_{\vy b}I_{a\vx} + \E\sum_{ab} \kappa(ab,ab)I_{\vx a}G_{b\vy} G^\ast_{\vy a}I_{b\vx} \\
    &\quad+  \E\sum_{abcd}\kappa(ab,ab)\kappa(cd,cd) I_{\vx a}G_{bc}G_{d\vy} G^\ast_{\vy a}G^\ast_{bc}I_{d\vx} \\
    &\quad - \E\sum_{abcd}\kappa(ab,ab,ba)\kappa(cd,dc) I_{\vx a} G_{bb} G_{ad} G_{c\vy} G^\ast_{\vy a} G^\ast_{bc} I_{d\vx} + \cdots
  \end{split}
\end{equation}
which we represent graphically as 
\begin{equation}\label{iso 0 expansion example graph}
  \begin{split}
    \E \abs{\braket{\vx,\un{W G}\vy}}^2 &
    = \E\Val\left(\sGraph{ x1[rectangle,label=above:\(\vx\)] --[double] a[label=below:\(a\)] --[dashed] b[label=below:\(b\)] -- y1[rectangle,label=above:\(\vy\)]; y2[rectangle,label=above:\(\vy\)] -- b;  a --[double] x2[rectangle,label=above:\(\vx\)]; }\right)
    + \E\Val\left(\sGraph{ x1[rectangle,label=above:\(\vx\)] --[double] a[label=below:\(a\)] --[dashed] b[label=below:\(b\)] -- y1[rectangle,label=above:\(\vy\)]; x2[rectangle,label=above:\(\vx\)]; y2[rectangle,label=above:\(\vy\)] -- a;  b --[double] x2; }\right) \\&\quad  
    + \E\Val\left(\sGraph{ x1[rectangle]  --[double] a --[dashed] b --[bl] c --[dashed] d -- y1[rectangle]; x2[rectangle]; y2[rectangle] -- a;  b --[br] c; d --[double] x2; }\right) 
    - \E\Val\left(\sGraph{ y2[rectangle]; x1[rectangle]; a; b; y1[rectangle]; c; d; c; x2[rectangle]; a --[dashed] b; c --[dashed] d; x1[rectangle]  --[double] a; b --[glr] b; a -- d; c -- y1[rectangle]; y2 -- a;  d --[double] x2[rectangle]; b--c;}\right)+\cdots,
  \end{split}
\end{equation}
with double line representing \(E_=\) edges. 

The graph reduction procedure of \(V_\mathrm{i}\cup V_\kappa^2\) is performed exactly as in Lemma~\ref{lemma reduction}. However, regarding the \(E_=\)-edges, two new phenomena occur. First, if some \(v\in V_\kappa^2\) connects two \((uv),(vw)\in E_=\) edges, then the two edges are collapsed and contribute just a scalar factor to the graph value, the inner product \(\braket{\vx_u,\vx_w}\) of the vectors \(\vx_u,\vx_w\) associated with the external vertices \(u,w\), see e.g.\ in the first term on the rhs.\ of~\eqref{iso 0 expansion example}--\eqref{iso 0 expansion example graph}: \(\sum_{a} I_{\vx a} I_{a\vy} = \braket{\vx,\vy}\). In the reduced graph, we record this as a new type of \emph{isolated} vertex \(v\in V_\mathrm{sc}\), representing the scalar product. Second, if some \(v\in V_\kappa^2\) connects one \((uv)\in E_=\) and one \((vw)\in E_g\), then the reduction process corresponds to simply replacing these two edges by \((uw)\in E_g^\mathrm{red}\), representing the matrix \(\cG^{(uw)}:=\cG^{(vw)}\), see the second, third and fourth term in~\eqref{iso 0 expansion example}--\eqref{iso 0 expansion example graph}, e.g.\ \(\sum_b G_{b\vy} I_{\vx b} = G_{\vx \vy}\). As a result of the reduction process we obtain a reduced graph 
\[\Gamma_\mathrm{red}=(V_\mathrm{e}\cup V_\kappa^{\ge3}\cup V_\cyc\cup V_\mathrm{sc},E'_=\cup E_g^\mathrm{red}),\]
where \(E'_=\) denotes the subset of \(E_=\) connecting \(V_\mathrm{e}\) and \(V_\kappa^{\ge 3}\). Similarly to~\eqref{number edges Egred}, the number of edges in the reduced graph is given by
\begin{equation}\label{eq gamma red j0 case}
  \abs{E_g^\mathrm{red}}=\abs{E_g}+\abs{E_=} - \abs{V_\mathrm{i}} - 2\abs{E_\kappa^2} + \abs{V_\cyc} - \abs{V_\mathrm{sc}}.
\end{equation}

For each \(e\in E_g^\mathrm{red}\) we use the entrywise bound of Lemma~\ref{degree two lemma}. On top of that, similarly to Lemma~\ref{lemma Ithree}, we obtain a set \(E_\mathrm{Ward}\subset (E_g^{\red}\setminus E_g^{\red,\cyc})\cup E'_=\) of Wardable edges, which contrary to the previous case may include both \(E_g^\red\)- and \(E'_=\)-edges. The proof of~\eqref{Second Ward} verbatim also applies to the current case by choosing the \emph{candidate sets} of all \(E'_=\cup (E_g^{\red}\setminus E_g^{\red,\cyc})\)-edges adjacent to either \(V_\kappa^3\) or \(V_\kappa^{\ge 3}\) vertices. Thus we obtain a Wardable set of the same minimal size as in~\eqref{Second Ward}. Due to 
\[ 
\begin{split}
  \sum_{a} \abs{I_{\vx a}} &= \sum_{a} \abs{\vx_a} \le \sqrt{N} \norm{\vx}\lesssim \sqrt{N}\\
  \sum_{a} \abs{I_{\vx a}} \abs{I_{\vy a}}&\le \norm{\vx}\norm{\vy} \lesssim 1
\end{split}\]
we can gain at least a factor of \(N^{-1/2}\) per \(E'_=\cap E_\mathrm{Ward}\) edge.  Thus, similarly to~\eqref{eq val entrywise bd} and with the additional improvement from the Wardable edges, we obtain 
\[ \begin{split}
  \abs{\Val(\Gamma)} &\prec \Lambda_+^{\abs{V_\mathrm{o}^{0\mathrm{tr}}}}\Pi_+^{\abs{V_\mathrm{o}^{t}}} \rho^{\abs{V_\mathrm{i}}+2\abs{E_\kappa^2}+\abs{E_\kappa^3}-\abs{V_\mathrm{o}}} K^{ \abs{V_\mathrm{o}} - \abs{E_g} + \abs{E_g^\red} -\abs{E_\mathrm{Ward}\cap E_g^\red}/2} \\
  &\qquad \times N^{-\abs{E_\kappa^2}+\abs{V_\cyc} +\abs{E_\kappa^3}/2 + \abs{E_g}-\abs{V_\mathrm{o}}/2-\abs{E_g^\red}-\abs{E_\mathrm{Ward}\cap E'_=}/2-\delta^{\ge 4}}  \\
  &= \Lambda_+^{2ap}\Pi_+^{2tp}  \rho^{2pb + 2\abs{E_\kappa^2}+\abs{E_\kappa^3} }N^{ p(a+2b)} N^{\abs{E_\kappa^2}+\abs{E_\kappa^3}/2+\abs{V_\mathrm{sc}} - 2p -\abs{E_\mathrm{Ward}\cap E'_=}/2-\delta^{\ge 4}}\\
  &\qquad \times K^{ -2bp }K^{2p -\abs{V_\mathrm{sc}} -2\abs{E_\kappa^2} +\abs{V_\cyc} -\abs{E_\mathrm{Ward}\cap E_g^\red}/2},
\end{split}  \]
where in the equality we used~\eqref{eq Eeq edges} and~\eqref{eq gamma red j0 case}. Due to~\eqref{eq Vsc} we have \(\abs{V_\mathrm{sc}}\le 2p-\abs{E_\kappa}\) and therefore the \(N\)-exponent is non-positive and we obtain from \(1/N\lesssim \rho^2/K\) that
\[\begin{split}
  \abs{\Val(\Gamma)}&\lesssim \Lambda_+^{2ap}\Pi_+^{2tp} N^{ (o+2b)p} \rho^{2(b+2)p - 2\abs{V_\mathrm{sc}} } K^{ -2bp } K^{-\abs{E_\kappa^2}+\abs{E_\kappa^3}/2  -\abs{E_\mathrm{Ward}}/2-\delta^{\ge 4}  +\abs{V_\cyc} }\\
  &\lesssim \Lambda_+^{2ap}\Pi_+^{2tp} \rho^{2(b+1)p} N^{ (o+2b)p} K^{ -(1+2b)p },
\end{split}  \] 
with the second inequality following from~\eqref{eq Vsc} and~\eqref{Second Ward}. 

\printbibliography%

\end{document}